\documentclass[11pt]{amsart}

\usepackage{amsmath,amssymb,amsthm,mathtools}
\usepackage{stmaryrd}
\usepackage{enumitem}
\usepackage[hidelinks]{hyperref}
\usepackage{geometry}
\usepackage{microtype}
\usepackage{tikz}
\usetikzlibrary{positioning,arrows.meta,calc}
\theoremstyle{definition}
\newtheorem{definition}{Definition}[section]
\newtheorem{example}[definition]{Example}
\newtheorem{remark}[definition]{Remark}
\newtheorem{fact}[definition]{Fact}

\theoremstyle{plain}
\newtheorem{theorem}[definition]{Theorem}
\newtheorem{claim}[definition]{Claim}
\newtheorem{lemma}[definition]{Lemma}
\newtheorem{proposition}[definition]{Proposition}
\newtheorem{corollary}[definition]{Corollary}

\newcommand{\Pow}{\mathcal P}
\newcommand{\Prop}{\mathsf{Prop}}
\newcommand{\Sent}{\mathsf{Sent}}

\newcommand{\LP}{L_{\in}} 
\newcommand{\LB}{L_B}     
\newcommand{\Lbox}{\mathcal L_{\Box}} 
\newcommand{\val}[1]{\llbracket #1\rrbracket_B}

\newcommand{\Btwo}{\mathbf{B}_2}
\newcommand{\Bfour}{\mathbf{B}_4}

\newcommand{\KTB}{\mathsf{KTB}}
\newcommand{\Logfull}{\mathsf{Log}_{\mathrm{full}}^{\sharp}}
\newcommand{\Logsub}{\mathsf{Log}_{\subseteq}^{\sharp}}

\title{The internal modal logic of forcing}
\author{Santiago Jockwich \and Sourav Tarafder \and Giorgio Venturi}
\date{}

\subjclass[2020]{Primary 03C90; Secondary 03B45, 03G05, 03E40}
\keywords{Boolean-valued models, forcing, modal logic, Kripke semantics, complete Boolean algebras}

\begin{document}

\begin{abstract}
We connect modal set theory with Boolean-valued models by developing an \emph{internal}
Kripke semantics for modal formulas whose atomic propositions are set-theoretic sentences.
Given a complete Boolean algebra $B$, we view its elements as ``local perspectives on truth''
inside the Boolean-valued universe $V^{(B)}$ and interpret the modal operators using an
accessibility relation $R$ on $B$ defined by \emph{co-consistency} (equivalently, Boolean
compatibility): $aRb$ iff $a\wedge b\neq 0$.
Our central conceptual point is that, for set-theoretic sentences $p$, the internal modality
$\Diamond p$ holds at $b$
iff there is an ultrafilter $U$ of $B$ containing $b$ such that the classical quotient
$V^{(B)}/U$ satisfies $p$. We compute several general and algebra-dependent modal validities,
and analyze the special behavior of complete atomic Boolean algebras.
Finally, adopting a translation-based semantics on the nonzero part $B^+=B\setminus\{0\}$,
we prove a soundness-and-completeness theorem:
the normal logic $\KTB$ is exactly the set of modal formulas valid in all translated
co-consistency models with parameters.
\end{abstract}

\maketitle

\section{Introduction}

Boolean-valued models are one of the standard semantic presentations of forcing. A complete
Boolean algebra $B$ determines a Boolean-valued universe $V^{(B)}$ in which each set-theoretic
sentence receives a truth value in $B$. In their classical use, these truth values are read externally:
one passes to ultrafilter quotients or generic extensions and asks which sentences can be forced to
hold. The modal analysis of forcing developed by Hamkins and L\"owe \cite{HamkinsLowe} is paradigmatic in this
respect: the worlds are models of set theory, accessibility is forcing extension, and the resulting
modal logic is $\mathsf{S4.2}$.\footnote{This analysis has been developed further for potentialist systems and specific classes of forcing \cite{HamkinsLinnebo, HamkinsLeibmanLow}, for inner models and class forcing \cite{InamdarLoewe2016, BartonWilliams}, and for studying generic invariance \cite{EsakiaLoewe2012, GilbertVenturi2021}.}

The purpose of this paper is to study the modal structure that is already present inside a single
Boolean-valued universe. Given a complete Boolean algebra $B$, we regard each Boolean value
$b\in B$ as a local perspective on truth, namely the collection of sentences that are at least
$b$-true. Compatibility of perspectives suggests the accessibility relation
\[
aRb \quad\Longleftrightarrow\quad a\wedge b\neq 0.
\]
This turns $B$ itself into a Kripke frame and yields an internal semantics for modal formulas whose
atomic propositions are set-theoretic sentences. The central point is that the Boolean algebra of
truth values is not merely an algebraic range for semantics; it carries a natural modal geometry.

This internal modality still admits a forcing-theoretic reading. For a set-theoretic sentence $p$ and
a Boolean value $b$, the assertion $\Diamond p$ at $b$ is equivalent to the existence of an ultrafilter
$U$ on $B$ containing $b$ such that the classical quotient $V^{(B)}/U$ satisfies $p$. Thus internal
possibility from $b$ coincides, at the level of non-modal sentences, with forceability below $b$. In
particular, at the top element $1_B$ one recovers the familiar slogan that $p$ is possible exactly
when it holds in some forcing extension.

At the same time, the internal compatibility semantics is sharply different from the external modal
logic of forcing. The compatibility relation is symmetric, but in general it is not transitive, and the state $0$ is isolated. Accordingly, the modal principles validated internally are not
those of the modal logic of forcing. We record the general validities and failures forced by the internal structure of Boolean algebras
and isolate the special behavior of complete atomic ones (Theorem \ref{thm:atomic-uniformity}). This leads naturally to two
related semantics. On the full algebra $B$, the isolated state $0$ forces failures of reflexive
principles such as $(T)$. On the nonzero part $B^+$, however, the compatibility relation is reflexive
as well as symmetric, and in the translation-based setting of Section~\ref{sec:completeness} the exact
global logic is $\KTB$ (Theorem \ref{thm:KTB-completeness}).

A second theme of the paper is the distinction between parameter-free and parameterized
semantics. In complete atomic algebras, parameter-free set-theoretic sentences need not realize
arbitrary Boolean truth values, and this scarcity produces additional modal uniformity. Allowing
parameters from $V^{(B)}$ restores surjectivity of the truth-value map and leads naturally to
translation-based Kripke models built from Boolean-valued truth. This makes it possible to study
not only the global behavior of the compatibility semantics, but also algebra-dependent modal logics
obtained by varying the Boolean algebra and the chosen collection of states.

The resulting picture shows that modal behavior is governed in a precise way by the underlying
algebraic structure. Small algebras already exhibit nontrivial phenomena, while nontrivial Boolean
algebras contain canonical four-element configurations witnessing the failure of familiar modal
principles (Corollary \ref{ContainB4}). More generally, the paper relates the modal behavior of a complete Boolean algebra to
the complete subalgebras it contains, and thereby exposes a robust finite/infinite dichotomy in the
associated algebra-dependent logics (Theorem \ref{thm:infinite-algebras-KTB}).

Taken together, these results propose an internal counterpart to the external modal logic of forcing.
The point is not only that Boolean-valued models assign intermediate truth values to set-theoretic
assertions, but that the space of truth values itself supports a natural modal semantics. This
perspective places Boolean-valued models, ultrafilter quotients, and forcing-theoretic modality
inside a single framework, and it suggests a new way to compare internal compatibility with external
forceability.

The paper is organized as follows. Section 2 reviews the Boolean-valued and modal preliminaries.
Section 3 introduces the internal semantics and proves the ultrafilter characterization of possibility.
Section 4 establishes general validities and failures and analyzes the complete atomic case. Section 5
turns to translation-based semantics on $B^+$. Section 6 studies the algebra-dependent logics
associated with fixed Boolean algebras in that same nonzero-state setting. Section 7 compares the
internal compatibility modality with the external forcing modality from \cite{HamkinsLowe}.

\section{Technical preliminaries}

We briefly recall the algebraic notions used throughout.

\begin{definition}
A \emph{lattice} is an algebra $\langle A,\wedge,\vee\rangle$ where $A$ is nonempty and
$\wedge,\vee$ are binary operations satisfying commutativity, associativity, idempotence, and
absorption.
\end{definition}

A lattice induces a partial order by $a\le b$ iff $a\wedge b=a$.

\begin{definition}
A lattice $\langle A,\wedge,\vee\rangle$ is \emph{bounded} if it has a greatest element $1$ and
a least element $0$. It is \emph{complete} if every subset $S\subseteq A$ has a supremum $\bigvee S$ and an infimum $\bigwedge S$.
\end{definition}

\begin{definition}
A lattice $\langle A,\wedge,\vee\rangle$ is \emph{distributive} if for any $a, b, c \in A$,
\[a \wedge (b \vee c) = (a \wedge b) \vee (a \wedge c) \mbox{ and } a \vee (b \wedge c) = (a \vee b) \wedge (a \vee c).\]
\end{definition}

\begin{definition}
A \emph{Boolean algebra} is a bounded distributive lattice
$\langle A,\wedge,\vee,{}^\ast,1,0\rangle$ in which every $a\in A$ has a complement $a^\ast$
satisfying $a\wedge a^\ast=0$ and $a\vee a^\ast=1$. The following two abbreviations are used in a Boolean algebra:
\[a \to b = a^\ast \vee b \mbox{ and } a \leftrightarrow b = (a \to b) \wedge (b \to a).\]
\end{definition}

\begin{example}
The Boolean algebra with universe $\{0,1\}$ is denoted by $\Btwo$ and corresponds to classical
two-valued semantics.
\end{example}

\begin{example}
For any set $X$, the power set $\Pow(X)$ with $\cap,\cup,{}^c,X,\varnothing$ is a complete Boolean
algebra.
\end{example}

\begin{definition}
An element $a\ne 0$ is an \emph{atom} if there is no $b$ with $0<b<a$. A Boolean algebra $B$ is \emph{atomic} if for every element $a \in B$ there exists a subset $S \subseteq B$ of atoms such that $\bigvee S = a$.
\end{definition}

\begin{definition}\label{def:filter}
Let $B=\langle A,\wedge,\vee,1,0\rangle$ be a Boolean algebra.
A set $F\subseteq A$ is a \emph{filter} if:
\begin{enumerate}[label=(\roman*)]
\item $1\in F$ and $0\notin F$;
\item if $x\in F$ and $x\le y$, then $y\in F$;
\item if $x,y\in F$, then $x\wedge y\in F$.
\end{enumerate}
A filter $U$ is an \emph{ultrafilter} if for every $a\in A$, either $a\in U$ or $a^\ast\in U$.
\end{definition}

\subsection{Boolean-valued models}

Boolean-valued models were introduced by Vop\v{e}nka, Solovay, and Scott as a semantic companion
to forcing. We follow standard references \cite{Bell,Jech}.

\begin{definition}
Let $B$ be a complete Boolean algebra. Define by transfinite recursion:
\[
V^{(B)}_\alpha=\{x:\ x \text{ is a function, }\mathrm{ran}(x)\subseteq B,\ \exists\xi<\alpha\ (\mathrm{dom}(x)\subseteq V^{(B)}_\xi)\},
\]
and put $V^{(B)}=\bigcup_\alpha V^{(B)}_\alpha$.
Let $\LB$ be the expansion of $\LP$ obtained by adding constant symbols for each element of $V^{(B)}$. And let $\Sent(L_B)$ be the class of corresponding sentences.
\end{definition}

\begin{definition}\label{def:boolean-interpretation}
The Boolean interpretation map $\val{\cdot}:\Sent(L_B)\to B$ is defined by recursion on formulas, starting
from membership and equality for $u,v\in V^{(B)}$:
\[
\begin{aligned}
\val{u\in v} \ &=\ \bigvee_{x\in\mathrm{dom}(v)}\bigl(v(x)\wedge \val{x=u}\bigr),\\[2pt]
\val{u=v} \ &=\ \Bigl(\bigwedge_{x\in\mathrm{dom}(u)}\bigl(u(x)\to \val{x\in v}\bigr)\Bigr)\ \wedge\
            \Bigl(\bigwedge_{y\in\mathrm{dom}(v)}\bigl(v(y)\to \val{y\in u}\bigr)\Bigr).
\end{aligned}
\]
and extending to the Boolean connectives and quantifiers by:
\[
\val{\varphi\wedge\psi}=\val{\varphi}\wedge\val{\psi},\quad
\val{\varphi\vee\psi}=\val{\varphi}\vee\val{\psi},\quad
\val{\neg\varphi}=\val{\varphi}^\ast,
\]
\[
\val{\forall x\,\varphi(x)}=\bigwedge_{u\in V^{(B)}}\val{\varphi(u)},
\qquad
\val{\exists x\,\varphi(x)}=\bigvee_{u\in V^{(B)}}\val{\varphi(u)}.
\]
\end{definition}

\begin{definition}
Write $V^{(B)}$ for the structure together with the interpretation map $\val{\cdot}$.
Given a filter $F$ on $B$, an $\LB$-sentence $\sigma$ is \emph{valid in $V^{(B)}$ relative to $F$} if
$\val{\sigma}\in F$, written $V^{(B)}\models_F \sigma$.
\end{definition}

\begin{theorem}[\cite{Bell}]\label{thm:zfc-boolean}
For every complete Boolean algebra $B$, the Boolean value of each axiom of $\mathsf{ZFC}$ is $1$.
Equivalently, $V^{(B)}\models \mathsf{ZFC}$ (in the sense $\val{\mathsf{ZFC}}=1$).
\end{theorem}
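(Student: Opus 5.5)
The plan is the standard argument from Bell's monograph: verify each $\mathsf{ZFC}$ axiom, or each instance of a schema, in turn. First I establish the basic logical lemmas for $V^{(B)}$: the equality axioms hold with value $1$, meaning $\val{u=u}=1$, symmetry, and transitivity $\val{u=v}\wedge\val{v=w}\le\val{u=w}$, together with the substitution property $\val{u=v}\wedge\val{\varphi(u)}\le\val{\varphi(v)}$. These are proved by simultaneous induction on the ranks of $u,v,w$, and then on formula complexity for substitution. I also need the bounded-quantifier lemma
\[\val{\exists x\in u\,\varphi(x)}=\bigvee_{x\in\mathrm{dom}(u)}\bigl(u(x)\wedge\val{\varphi(x)}\bigr),\]
with the dual statement for $\forall$. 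This follows directly from substitution and the definition of $\val{u\in v}$. Completeness of $B$ guarantees that all the joins and meets exist. Finally, the usual proof that every sentence derivable in first-order logic gets value $1$ is routine by induction on derivations.

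With these tools in hand, most axioms are handled by explicit witnesses. For extensionality, I use the bounded-quantifier lemma to rewrite $\val{\forall z(z\in u\leftrightarrow z\in v)}$, and it matches the defining clause of $\val{u=v}$. For pairing, I take $\{u,v\}$ with domain $\{u,v\}$ and constant value $1$. For union, I take the name with domain $\bigcup_{x\in\mathrm{dom}(u)}\mathrm{dom}(x)$, sending $y$ to $\bigvee\{u(x)\wedge x(y)\}$. For power set, I take the name whose domain is all of $B^{\mathrm{dom}(u)}$, with each such function getting value $\val{x\subseteq u}$. The key step there is to show that any $w$ has $\val{w\subseteq u}$ below the join attained by its restriction-style name $x(y)=\val{y\in w}\wedge\val{y\in u}$ on $\mathrm{dom}(u)$. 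For separation, I take $x(y)=u(y)\wedge\val{\varphi(y)}$. For infinity, I use canonical names $\check{\omega}$, having first checked $\val{\check a\in\check b}\in\{0,1\}$ as in classical membership. For foundation ($\in$-induction), I argue by induction on rank: I let $b=\val{\forall x(\forall y\in x\,\varphi(y)\to\varphi(x))}$, show $b\le\val{\varphi(u)}$ for all $u$ by rank induction using the bounded-quantifier lemma, and conclude.

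The main obstacles are replacement (or collection) and choice. For collection, given $u$, for each $x\in\mathrm{dom}(u)$ the join $\bigvee_{y}\val{\varphi(x,y)}$ over the proper class $V^{(B)}$ is attained already by $y$ ranging over some set $V^{(B)}_{\alpha_x}$. This holds because $B$ is a set, so only set-many values occur, and Replacement in $V$ then bounds the ranks. Taking $\alpha=\sup\alpha_x$ and $v$ with domain $V^{(B)}_\alpha$ and constant value $1$ then witnesses collection, and replacement follows with separation. This is the crucial use of the fullness lemma in its reflection form. For choice, I would prove the well-ordering form. Working in $V$ with AC, I enumerate $\mathrm{dom}(u)$ as $\langle x_\xi:\xi<\kappa\rangle$ and define a name for a function from $\check\kappa$ onto a superset of $u$, namely $f=\{\langle\check\xi,x_\xi\rangle^{(B)}:\xi<\kappa\}$ with value $1$. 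Then $u$ is covered by the range of an ordinal-indexed function, so it can be well-ordered inside $V^{(B)}$ using the ordering of $\check\kappa$. This requires the lemmas that ordered-pair names behave correctly and that $\check\kappa$ is an ordinal with value $1$. I expect the bookkeeping of these pair and check-name lemmas, together with the collection bound, to be the laborious part; the rest is routine verification.
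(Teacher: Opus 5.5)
Your proposal is correct and is essentially the standard axiom-by-axiom verification from Bell's monograph. That is all the paper relies on here: it states the theorem with a citation to Bell and gives no proof of its own. One terminological point: your collection step is a boundedness argument (only set-many values lie in $B$, and Replacement in $V$ bounds the ranks of the witnesses), and it does not use the fullness (maximum) principle at all, so that label is a misnomer rather than a gap.
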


To show that a sentence in the language of set theory $\sigma$ is consistent with $\mathsf{ZFC}$ using Boolean-valued
models, it suffices to exhibit a complete $B$ with $\val{\sigma}\ne 0$.
Indeed, the principal filter generated by $\val{\sigma}$ extends to an ultrafilter $U$ on $B$ with
$\val{\sigma}\in U$, and the corresponding quotient $V^{(B)}/U$ is a classical model of $\mathsf{ZFC}+\sigma$
(see Theorem~\ref{thm:ultrafilter-truth-lemma}).

\subsection{Faithfulness and loyalty}

Following \cite{LowePassmannTarafder}, we recall two notions measuring how much of the Boolean algebra $B$
is realized as truth values of \emph{parameter-free} sentences.

\begin{definition}[\cite{LowePassmannTarafder}]\label{def:faithful-loyal}
Let $B$ be a complete Boolean algebra and $F$ a filter on $B$.
The Boolean-valued model $V^{(B)}$ is \emph{loyal} to $(B,F)$ if the propositional logic of $(B,F)$
coincides with the propositional logic induced by $V^{(B)}$ relative to $F$.
It is \emph{faithful to $B$} if for every $a\in B$ there exists a sentence in the language of set theory $\sigma$ such that $\val{\sigma}=a$.
\end{definition}

\begin{lemma}[\cite{LowePassmannTarafder}]\label{lem:faithful-implies-loyal}
If $V^{(B)}$ is faithful to $B$ then it is loyal to $(B,F)$ for every filter $F$ on $B$.
\end{lemma}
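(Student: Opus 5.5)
We need to show two logics coincide: the propositional logic of $(B,F)$, i.e.\ the propositional formulas $\varphi(x_1,\dots,x_n)$ such that every assignment $v\colon\{x_1,\dots,x_n\}\to B$ sends $\varphi$ into $F$ under the Boolean operations; and the propositional logic induced by $V^{(B)}$ relative to $F$, i.e.\ those $\varphi$ such that for every substitution of set-theoretic sentences $\sigma_1,\dots,\sigma_n$ for the variables we have $V^{(B)}\models_F\varphi(\sigma_1,\dots,\sigma_n)$. The plan is to prove each inclusion by translating between sentence substitutions and $B$-valued assignments.

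The key preliminary step is a homomorphism observation. For any substitution $x_i\mapsto\sigma_i$, define the assignment $v(x_i)=\val{\sigma_i}$. By induction on the complexity of $\varphi$, using the clauses of Definition~\ref{def:boolean-interpretation} for $\wedge,\vee,\neg$ (and for $\to$, $\leftrightarrow$ via the abbreviations $a^\ast\vee b$ and the Boolean operations), we get
\[
\val{\varphi(\sigma_1,\dots,\sigma_n)} = \overline{v}(\varphi),
\]
where $\overline{v}$ is the usual extension of $v$ to all formulas computed in $B$. This is routine, since the connective clauses are exactly the Boolean operations.

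For the first inclusion, suppose $\varphi$ is valid in $(B,F)$. Given any sentences $\sigma_i$, the induced assignment $v$ satisfies $\overline v(\varphi)\in F$. By the identity above, $\val{\varphi(\vec\sigma)}\in F$. So this inclusion holds for every $B$, with no faithfulness needed.

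For the converse, suppose $\varphi$ is valid in $V^{(B)}$ relative to $F$, and let $v$ be an arbitrary assignment into $B$. By faithfulness, for each $i$ we choose a sentence $\sigma_i$ with $\val{\sigma_i}=v(x_i)$; this is a finite choice, so no choice issues arise. Then $\overline v(\varphi)=\val{\varphi(\vec\sigma)}\in F$. Hence $\varphi$ is valid in $(B,F)$.

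The only point needing care, and the main obstacle, is fixing the notion of propositional logic precisely, in particular matching the connectives and the designated set $F$ on both sides. Once that is settled, faithfulness gives surjectivity of $\val{\cdot}$ on sentences, and the homomorphism property carries the argument.
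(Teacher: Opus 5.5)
The paper gives no proof of this lemma; it is quoted from \cite{LowePassmannTarafder}. Your argument is correct and is the standard one: the identity $\val{\varphi(\sigma_1,\dots,\sigma_n)}=\overline{v}(\varphi)$ gives the inclusion of the logic of $(B,F)$ into the logic of $V^{(B)}$ relative to $F$ without any hypothesis, and faithfulness lets you realize an arbitrary assignment by sentences, which gives the converse. As an aside, for a Boolean $B$ and a proper filter $F$ both logics are already classical propositional logic: you can falsify any non-tautology with sentences of value $0$ and $1$, such as $\forall x\,(x=x)$ and its negation, and $0\notin F$. So in this paper's setting the faithfulness hypothesis is not actually needed, which is consistent with Theorem~\ref{thm:atomic-not-faithful} asserting loyalty without faithfulness.
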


\begin{theorem}[\cite{LowePassmannTarafder}]\label{thm:atomic-not-faithful}
If $B$ is an atomic complete Boolean algebra with more than two elements, then $V^{(B)}$ is loyal but not
faithful for parameter-free sentences. In particular, for every $\sigma$, sentence in the language of set theory,  we have
$\val{\sigma}\in\{0,1\}$.
\end{theorem}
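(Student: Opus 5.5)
The plan is to prove the ``in particular'' clause first: every parameter-free sentence $\sigma$ has $\val{\sigma}\in\{0,1\}$. Non-faithfulness then follows at once, since $B$ has more than two elements and some $a\notin\{0,1\}$ is not a truth value. Loyalty then follows from a short propositional argument.

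For the two-valuedness, write the complete atomic algebra as $B\cong\Pow(X)$, where $X$ is the set of atoms and $|X|\ge 2$. The key tool is the automorphism group of $B$. Every permutation $\pi$ of $X$ induces a complete automorphism of $B$, and $\pi$ lifts by $\in$-recursion to a map $\hat\pi:V^{(B)}\to V^{(B)}$ given by
\[
\hat\pi(u)=\{(\hat\pi(x),\pi(u(x))):x\in\mathrm{dom}(u)\}.
\]
By induction on formulas, using Definition~\ref{def:boolean-interpretation} and the fact that $\pi$ preserves arbitrary joins, meets and complements, we get $\val{\varphi(\hat\pi u_1,\dots)}=\pi(\val{\varphi(u_1,\dots)})$. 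The quantifier step uses that $\hat\pi$ is a bijection of $V^{(B)}$, with inverse $\widehat{\pi^{-1}}$. For a parameter-free $\sigma$ this gives $\pi(\val{\sigma})=\val{\sigma}$ for every permutation $\pi$ of $X$. The only subsets of $X$ fixed by all permutations are $\varnothing$ and $X$, so $\val{\sigma}\in\{0,1\}$.

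Loyalty needs, for each filter $F$, that the propositional logic of $(B,F)$ equals the logic obtained by assigning truth values of set-theoretic sentences to the variables. The latter assignments are exactly the $\{0,1\}$-valued ones, since both $0$ and $1$ are realized (by $\neg\,0=0$ and $0=0$). It therefore suffices to show that the propositional logic of $(B,F)$ is classical logic. Any $\{0,1\}$-valuation computes the same value in $B$ as in $\Btwo$, and $1\in F$, $0\notin F$. So a formula valid in $(B,F)$ is a classical tautology, and conversely every tautology evaluates to $1\in F$ under every $B$-valuation, because $B$ is Boolean. Thus both logics are classical propositional logic.

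The main obstacle is the automorphism lemma. One must check that $\hat\pi$ is well defined and bijective on $V^{(B)}$, and carry out the simultaneous induction for $\in$ and $=$, following the recursive definition of $\val{u\in v}$ and $\val{u=v}$. Completeness of $\pi$ as an automorphism is what makes the infinitary joins in these clauses commute with $\pi$.
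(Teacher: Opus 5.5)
Your proof is correct and follows essentially the same route as the source: the paper gives no proof of its own and defers to \cite{LowePassmannTarafder}, whose key observation is that a complete automorphism $\pi$ of $B$ lifts to $\hat\pi$ on $V^{(B)}$ with $\val{\sigma}=\pi(\val{\sigma})$ for parameter-free $\sigma$, so that for $B\cong\Pow(X)$ with $|X|\ge 2$ only $0$ and $1$ can be truth values. Your loyalty argument (both logics are classical propositional logic, since $0$ and $1$ are realized and $B$ is Boolean) is also the standard one, and in fact it does not use atomicity.
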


\subsection{Modal logic}

We use standard propositional modal logic and stratify our language as usual.
The modal language is $\Lbox$ with propositional variables belonging to a countable set that we name $\Prop$ and Boolean connectives, and $\Box,\Diamond$.

\begin{definition}
A \emph{Kripke frame} is a pair $\langle W,R\rangle$ where $W$ is nonempty and $R\subseteq W\times W$.
A \emph{Kripke model} is a triple $M=\langle W,R,V\rangle$, where $V:\Prop\to \Pow(W)$ is a valuation.
\end{definition}

\begin{definition}\label{def:sat}
Given $M=\langle W,R,V\rangle$ and $w\in W$, define $M,w\models\varphi$ by the usual clauses:
\[
\begin{aligned}
&M,w\models p \iff w\in V(p),\\
&M,w\models \Box\varphi \iff \forall v\in W\ (wRv\Rightarrow M,v\models\varphi),\\
&M,w\models \Diamond\varphi \iff \exists v\in W\ (wRv\ \&\ M,v\models\varphi),
\end{aligned}
\]
and the standard Boolean clauses for $\neg,\wedge,\vee,\to$.
\end{definition}

\begin{definition}
A formula $\varphi$ is \emph{valid in a model} $M$, written $M\models \varphi$, if $M,w\models\varphi$ for all
$w\in W$. It is \emph{valid in a frame} $\langle W,R\rangle$ if it is valid in every model based on that frame.
\end{definition}

\section{Extended Boolean-valued semantics}\label{sec:extended-semantics}

In order to interpret the modal language in set theory we will interpret the \emph{atomic} modal propositions as (parameter-free) set-theoretic sentences. Let us call such a collection $\Sent_{\in}$.
Hence, fix an enumeration of $\Sent_{\in}$ as $\langle p_i:i\in\omega\rangle$, which covers all elements of $\Prop$.\footnote{For completeness, we later quantify over \emph{all} such enumerations; see Section~\ref{sec:completeness}.}

\begin{remark}
Notice that we are here using the symbols for connectives ($\neg, \land,  \lor, \to$) to indicate three different operations: 1) the algebraic operations, 2) the connectives in the language $L_\Box$, and 3) the connectives in the language $L_\in$. We are confident that the difference will be clear from the context.
\end{remark}

\subsection{Models, satisfaction, and theories}

We now need to introduce the relevant definition for interpreting our modal language.

\begin{definition}\label{def:MB}
Let $B$ be a complete Boolean algebra and let $R$ be an accessibility relation on $B$.
Define $M_B=\langle B,R,\val{\cdot}\rangle$ and interpret modal formulas at states $b\in B$ by:
\begin{enumerate}[label=(\roman*)]
\item $M_B,b\models p$ iff $b\le \val{p}$ for $p\in\Sent_{\in}$;
\item Boolean connectives as in Definition~\ref{def:sat};
\item $M_B,b\models \Box\varphi$ iff for all $a\in B$ with $bRa$, we have $M_B,a\models \varphi$;
\item $M_B,b\models \Diamond\varphi$ iff there exists $a\in B$ with $bRa$ and $M_B,a\models\varphi$.
\end{enumerate}
\end{definition}

\begin{definition}\label{def:theories}
Let $B$ be complete.
A modal formula $\varphi$ is \emph{valid in $M_B$} if $M_B,b\models\varphi$ for all $b\in B$; write $M_B\models\varphi$.
Define
\[
T_H(M_B)=\{\varphi\in\Lbox:\ M_B\models\varphi\}.
\]
We also consider \emph{validity}$^\sharp$ excluding the isolated state $0$: write $M_B\models^\sharp\varphi$ if
$M_B,b\models\varphi$ for all $b\in B\setminus\{0\}$, and define
\[
T_H^\sharp(M_B)=\{\varphi\in\Lbox:\ M_B\models^\sharp\varphi\}.
\]
\end{definition}

\begin{fact}\label{fact:zfc-sentences}
If $\sigma\in\Sent_{\in}$ has Boolean value $\val{\sigma}=1$, then $M_B\models \sigma$.
\end{fact}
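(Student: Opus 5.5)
The claim follows directly from the atomic clause of Definition~\ref{def:MB}, together with the fact that $1$ is the top element of $B$. I would argue pointwise: validity in $M_B$ (Definition~\ref{def:theories}) means satisfaction at every state $b\in B$, so it suffices to fix an arbitrary $b\in B$ and verify $M_B,b\models\sigma$.

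First I would observe that $\sigma\in\Sent_{\in}$ is a parameter-free set-theoretic sentence. By the standing convention of Section~\ref{sec:extended-semantics}, it is one of the atomic propositions $p_i$ of the modal language. Hence clause (i) of Definition~\ref{def:MB} applies, and $M_B,b\models\sigma$ holds if and only if $b\le\val{\sigma}$. By hypothesis $\val{\sigma}=1$, and every element of a bounded lattice satisfies $b\le 1$, that is, $b\wedge 1=b$. So the condition holds for every $b$, including the isolated state $0$. Since $b$ was arbitrary, $M_B\models\sigma$, and a fortiori $M_B\models^\sharp\sigma$.

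The only point needing care is bookkeeping rather than mathematics. One must confirm that $\sigma$ is read as an atomic modal proposition, so that clause (i) governs it. If $\sigma$ were instead parsed through its internal logical structure, one would worry whether the modal-language connectives and the $L_\in$ connectives agree under the notational overloading mentioned in the remark. Because Definition~\ref{def:MB} treats every sentence of $\Sent_{\in}$ atomically, this issue does not arise and no induction on formulas is needed.

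I would close by recording the intended instance. By Theorem~\ref{thm:zfc-boolean}, every axiom of $\mathsf{ZFC}$ has Boolean value $1$, so each such axiom is valid in $M_B$ for every complete Boolean algebra $B$.
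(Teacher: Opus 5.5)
Your proof is correct and matches the paper's argument: for an arbitrary state $b$ you have $b\le 1=\val{\sigma}$, so $M_B,b\models\sigma$ by clause (i) of Definition~\ref{def:MB}. Your remarks on reading $\sigma$ as an atomic proposition and on the $\mathsf{ZFC}$ instance are harmless additions.
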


\begin{proof}
If $\val{\sigma}=1$, then for every $b\in B$ we have $b\le 1=\val{\sigma}$, hence $M_B,b\models\sigma$ by
Definition~\ref{def:MB}(i).
\end{proof}

\subsection{Accessibility as co-consistency}\label{subsec:coconsistency}

We now introduce the accessibility relation that captures our intended internal notion of possibility.

\begin{definition}\label{def:R}
Let $B$ be a complete Boolean algebra. Define $R$ on $B$ by
\[
aRb \quad\Longleftrightarrow\quad a\wedge b\neq 0.
\]
\end{definition}

\begin{remark}\label{rem:R-equivalences}
Let $B$ be a Boolean algebra and $a,b\in B$. The following are equivalent:
\begin{enumerate}[label=(\roman*)]
\item $aRb$, i.e.\ $a\wedge b\neq 0$;
\item $b\not\le a^{\ast}$;
\item there exists an ultrafilter $U$ on $B$ such that $a, b\in U$.
\end{enumerate}
\end{remark}

\begin{proof}
(i)$\Leftrightarrow$(ii) is standard: $b\le a^\ast$ iff $a\wedge b=0$.
For (i)$\Rightarrow$(iii), extend the proper filter generated by $a\wedge b$ to an ultrafilter.
For (iii)$\Rightarrow$(i), if $a,b\in U$ then $a\wedge b\in U$, hence $a\wedge b\neq 0$.
\end{proof}

\begin{example}
Let $B=\Bfour=\{0,a,b,1\}$ with $a^\ast=b$ and $b^\ast=a$.
Then $0$ is $R$-isolated, $1$ is $R$-related to every nonzero element, and $aRb$ \& $bRa$ fail, since $a\wedge b=0$.
\end{example}

Notice that $0$ is an isolated state.

\begin{lemma}\label{lem:state0}
Let $B$ be a complete Boolean algebra. Then for every modal formula $\varphi\in\Lbox$:
\begin{enumerate}[label=(\roman*)]
\item $M_B,0\models \Box\varphi$;
\item $M_B,0\not\models \Diamond\varphi$.
\end{enumerate}
Moreover, for every atomic $p\in\Sent_{\in}$, we have $M_B,0\models p$.
\end{lemma}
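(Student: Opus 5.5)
The whole lemma comes from one observation: $0$ has no $R$-successors. Indeed, for every $a\in B$ we have $0\wedge a=0$, so by Definition~\ref{def:R} the relation $0Ra$ fails. I will establish this first and then read off the three claims from Definition~\ref{def:MB}.

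For (i), the clause for $\Box$ in Definition~\ref{def:MB}(iii) requires $M_B,a\models\varphi$ for every $a$ with $0Ra$. Since no such $a$ exists, the condition holds vacuously, whatever $\varphi$ is. For (ii), the clause for $\Diamond$ in Definition~\ref{def:MB}(iv) requires some $a$ with $0Ra$ and $M_B,a\models\varphi$. No such $a$ exists, so $M_B,0\not\models\Diamond\varphi$. Neither argument inspects $\varphi$, so no induction on formulas is needed.

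For the atomic claim, Definition~\ref{def:MB}(i) says $M_B,0\models p$ iff $0\le\val{p}$. This holds because $0$ is the least element of $B$.

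I expect no real obstacle here. The only point needing care is that (ii) relies on the existential clause for $\Diamond$ being taken as primitive, exactly as in Definition~\ref{def:MB}(iv), rather than defined as $\neg\Box\neg$. Even under that definition the result would follow from (i), since $M_B,0\models\Box\neg\varphi$ gives $M_B,0\not\models\neg\Box\neg\varphi$.
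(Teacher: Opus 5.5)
Your proof is correct and follows the paper's argument exactly. Since $0\wedge a=0$, the state $0$ has no $R$-successors, so $\Box\varphi$ holds vacuously and $\Diamond\varphi$ fails at $0$, while the atomic clause holds because $0\le\val{p}$; your closing remark about reading $\Diamond$ as $\neg\Box\neg$ is also correct, though not needed given Definition~\ref{def:MB}(iv).
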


\begin{proof}
There is no $a\in B$ with $0Ra$ since $0\wedge a=0$, so $\Box\varphi$ holds vacuously at $0$ and $\Diamond\varphi$
fails. For the atomic clause, $0\le \val{p}$ always holds.
\end{proof}

On the contrary $1$ is connected with every other state.

\begin{proposition}\label{prop:1-related}
If $a\in B\setminus\{0\}$, then $1Ra$ and $aR1$.
\end{proposition}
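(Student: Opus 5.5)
The argument is a direct unfolding of Definition~\ref{def:R} together with the identity law for the top element of a Boolean algebra. Fix $a\in B\setminus\{0\}$. By Definition~\ref{def:R}, $1Ra$ holds iff $1\wedge a\neq 0$. In any bounded lattice with greatest element $1$ we have $a\le 1$, hence $1\wedge a=a$ by the definition of the induced order ($x\le y$ iff $x\wedge y=x$) together with commutativity of $\wedge$. Since $a\neq 0$ by hypothesis, $1\wedge a=a\neq 0$, which gives $1Ra$.

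For the converse direction $aR1$, we use commutativity of $\wedge$ once more: $a\wedge 1=1\wedge a=a\neq 0$, so $aR1$. Alternatively, one can note that $R$ is symmetric in general, since $a\wedge b=b\wedge a$, so $aR1$ follows immediately from $1Ra$.

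If one prefers the reformulation in Remark~\ref{rem:R-equivalences}, the same conclusion follows from clause (iii). Every ultrafilter contains $1$ by Definition~\ref{def:filter}(i), and the principal filter generated by $a\neq 0$ extends to an ultrafilter $U$ containing $a$; thus $a,1\in U$. I would present the direct computation as the proof and mention this only as a remark.

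There is no real obstacle here. The only point needing care is that the hypothesis $a\neq 0$ is essential: by Lemma~\ref{lem:state0}, $0$ is isolated, so in particular $1R0$ fails, and the statement is sharp.
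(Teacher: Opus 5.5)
Your proof is correct and is the paper's argument: both compute $1\wedge a=a\neq 0$ and $a\wedge 1=a\neq 0$ directly from Definition~\ref{def:R}. Your side remarks (the ultrafilter reformulation and the sharpness at $0$) are accurate but not needed.
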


\begin{proof}
If $a\neq 0$, then $1\wedge a=a\neq 0$, so $1Ra$; and $a\wedge 1=a\neq 0$, so $aR1$.
\end{proof}

We now make precise the connection between internal possibility and the usual ultrafilter collapse.

\begin{definition}\label{def:ultrafilter-quotient}
Let $B$ be a complete Boolean algebra and $U$ an ultrafilter on $B$.
Define an equivalence (class) relation $\equiv_U$ on $V^{(B)}$ by
\[
u\equiv_U v \quad\Longleftrightarrow\quad \val{u=v}\in U,
\]
and define a membership relation $\in_U$ on $\equiv_U$-classes by
\[
[u]_U\in_U [v]_U \quad\Longleftrightarrow\quad \val{u\in v}\in U.
\]
Write $V^{(B)}/U$ for the resulting two-valued structure.
\end{definition}

Thus truth in a structure can be defined in terms of ultrafilters.

\begin{theorem}\label{thm:ultrafilter-truth-lemma}
For every $\LB$-sentence $\sigma$ and every ultrafilter $U$ on $B$,
\[
V^{(B)}/U\models \sigma \quad\Longleftrightarrow\quad \val{\sigma}\in U.
\]
\end{theorem}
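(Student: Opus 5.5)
The plan is the standard Łoś-style argument: first check that the quotient is well defined, then prove the equivalence by induction on the complexity of $\sigma$. Throughout I use the fact that $\val{\cdot}$ validates all equality axioms with value $1$. That is,
\[
\val{u=u}=1,\quad \val{u=v}=\val{v=u},\quad \val{u=v}\wedge\val{v=w}\le\val{u=w},
\]
\[
\val{u=v}\wedge\val{v\in w}\le\val{u\in w},\quad \val{u=v}\wedge\val{w\in u}\le\val{w\in v},
\]
and more generally $\val{u=v}\wedge\val{\varphi(u)}\le\val{\varphi(v)}$ for every formula $\varphi$. These are part of the standard development in \cite{Bell} that underlies Theorem~\ref{thm:zfc-boolean}, so I take them as given rather than re-deriving them by induction on the rank of names.

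\textbf{Well-definedness.} Because $U$ is an upward-closed filter that is closed under $\wedge$, the inequalities above immediately make $\equiv_U$ an equivalence relation. They also make $\in_U$ independent of the chosen representatives. For example, if $\val{u=u'}\in U$ and $\val{u\in v}\in U$, then $\val{u'\in v}\ge \val{u=u'}\wedge\val{u\in v}\in U$.

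\textbf{Induction.} The statement is proved for all $\LB$-sentences by induction on formula complexity.
\begin{enumerate}[label=(\roman*)]
\item Atomic sentences $u\in v$ and $u=v$ hold by the definition of $V^{(B)}/U$.
\item For negation I use that $U$ is an ultrafilter: $\val{\neg\varphi}=\val{\varphi}^\ast\in U$ iff $\val{\varphi}\notin U$. Here ``$\Leftarrow$'' is the ultrafilter property, and ``$\Rightarrow$'' holds because $0\notin U$.
\item Conjunction follows from closure of $U$ under meets and upward closure.
\item Disjunction and $\to$ reduce to negation and conjunction, since $U$ is prime.
\item The universal quantifier reduces to the existential one via $\val{\forall x\varphi}=\val{\exists x\neg\varphi}^\ast$, which is De Morgan in a complete algebra.
\end{enumerate}

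\textbf{Main obstacle: the existential quantifier.} Here $\val{\exists x\,\varphi(x)}=\bigvee_{u}\val{\varphi(u)}$, and an ultrafilter need not be complete. So from the supremum lying in $U$ one cannot conclude that some $\val{\varphi(u)}$ lies in $U$, yet that witness is exactly what the right-to-left direction of the induction requires. The left-to-right direction is unproblematic: if $V^{(B)}/U\models\varphi([u])$, then by the induction hypothesis $\val{\varphi(u)}\in U$, and this is $\le$ the supremum.

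To get the other direction I invoke the \emph{Maximum Principle}: for a complete $B$ there is $u_0\in V^{(B)}$ with
\[
\val{\exists x\,\varphi(x)}=\val{\varphi(u_0)}.
\]
Given that, $\val{\exists x\varphi}\in U$ yields $\val{\varphi(u_0)}\in U$, so the induction hypothesis gives $V^{(B)}/U\models\varphi([u_0])$ and hence $V^{(B)}/U\models\exists x\varphi$.

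To prove the Maximum Principle I would refine $\{\val{\varphi(u)}:u\in V^{(B)}\}$ to an antichain. By Zorn's lemma (choice in $V$), pick a maximal antichain $\{a_i\}_{i\in I}$ such that each $a_i\le\val{\varphi(u_i)}$ for some $u_i$. Maximality forces $\bigvee_i a_i=\val{\exists x\varphi}$. Then build the mixture $u_0$ of the $u_i$ along the $a_i$: take $\mathrm{dom}(u_0)=\bigcup_i\mathrm{dom}(u_i)$ and $u_0(y)=\bigvee_i(a_i\wedge \val{y\in u_i})$. The Mixing Lemma gives $a_i\le\val{u_0=u_i}$, and therefore
\[
a_i\le\val{u_0=u_i}\wedge\val{\varphi(u_i)}\le\val{\varphi(u_0)}.
\]
Taking the join over $i$ gives $\val{\exists x\varphi}\le\val{\varphi(u_0)}$, and the reverse inequality is trivial.

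Verifying the Mixing Lemma inequality $a_i\le\val{u_0=u_i}$ is the one genuinely computational step. It uses that the $a_i$ are pairwise disjoint, and I would expect to cite it from \cite{Bell} rather than recompute it.
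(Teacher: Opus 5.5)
Your proposal is correct and follows essentially the argument the paper relies on. The paper just cites the standard Boolean-valued \L o\'s lemma in \cite{Bell,Jech}, proved by induction on formula complexity, and there the only nontrivial step is the existential case, handled as you do via the Maximum Principle (fullness of $V^{(B)}$) built from a maximal antichain and the Mixing Lemma. One small bookkeeping point: in the $\forall$ case, apply the induction hypothesis to the sentences $\neg\varphi(u)$ (this follows from the hypothesis for $\varphi(u)$ by the negation step) together with the Maximum Principle for $\neg\varphi$; do not invoke the statement for $\exists x\,\neg\varphi$, which is not of lower complexity.
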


\begin{proof}
This is the standard Boolean-valued \L o\'s lemma for ultrafilter quotients, proved by induction on the
complexity of formulas; see \cite{Bell,Jech}.
\end{proof}

And this reading provides a direct interpretation of the $\Diamond$-modality when applied to set-theoretical sentences.

\begin{proposition}\label{prop:diamond-ultrafilter-reading}
Let $p\in\Sent_{\in}$ (viewed as an atomic modal proposition) and let $b\in B$.
Then
\[
M_B,b\models \Diamond p
\quad\Longleftrightarrow\quad
\exists\,U\text{ an ultrafilter on }B\ \bigl(b\in U\ \wedge\ V^{(B)}/U\models p\bigr).
\]
\end{proposition}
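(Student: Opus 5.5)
The plan is to show that both sides of the equivalence are equivalent to the single algebraic condition $b\wedge\val{p}\neq 0$. Once that is established, the proposition follows by chaining the two equivalences. The tools are Definition~\ref{def:MB}, Remark~\ref{rem:R-equivalences}, and the truth lemma, Theorem~\ref{thm:ultrafilter-truth-lemma}.

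\emph{Left side.} By Definition~\ref{def:MB}(iv) and (i), $M_B,b\models\Diamond p$ means that there is some $a\in B$ with $b\wedge a\neq 0$ and $a\le\val{p}$.
\begin{itemize}
\item If such an $a$ exists, then $0\neq b\wedge a\le b\wedge\val{p}$, so $b\wedge\val{p}\neq 0$.
\item Conversely, if $b\wedge\val{p}\neq 0$, take $a=\val{p}$ itself. Trivially $a\le\val{p}$, so $M_B,a\models p$, and $bRa$ holds by Definition~\ref{def:R}.
\end{itemize}
Hence $M_B,b\models\Diamond p$ iff $b\wedge\val{p}\neq 0$.

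\emph{Right side.} By Theorem~\ref{thm:ultrafilter-truth-lemma}, for $p\in\Sent_{\in}\subseteq\Sent(\LB)$ we have $V^{(B)}/U\models p$ iff $\val{p}\in U$. So the right side says there is an ultrafilter $U$ with $b,\val{p}\in U$. By Remark~\ref{rem:R-equivalences}, (iii)$\Leftrightarrow$(i), applied to the pair $b,\val{p}$, this holds iff $b\wedge\val{p}\neq 0$. Concretely:
\begin{itemize}
\item one extends the proper filter generated by $b\wedge\val{p}$ to an ultrafilter, which requires the Boolean prime ideal theorem, available in the metatheory;
\item in the other direction, closure of $U$ under meets and $0\notin U$ give $b\wedge\val{p}\neq 0$.
\end{itemize}

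The degenerate case $b=0$ is consistent with both sides: by Lemma~\ref{lem:state0}, $\Diamond p$ fails at $0$, and no ultrafilter contains $0$.

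There is no real obstacle in this argument. The only substantive input is the truth lemma, which is quoted. The one point needing care is the choice of witness $a=\val{p}$ in the converse of the left-side step. That choice is legitimate because the atomic clause of Definition~\ref{def:MB} is downward-closed truth, i.e.\ $b\le\val{p}$, rather than equality.
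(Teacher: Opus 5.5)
Your proposal is correct and is essentially the paper's argument. The paper proves the two directions directly: forward via an ultrafilter containing both $a$ and $b$ plus upward closure to get $\val{p}\in U$, and backward via the witness $a=\val{p}$ and the truth lemma. You instead route both directions through the intermediate condition $b\wedge\val{p}\neq 0$, using the same ingredients (Remark~\ref{rem:R-equivalences}, Theorem~\ref{thm:ultrafilter-truth-lemma}, and the same witness).
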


\begin{proof}
($\Rightarrow$) Assume $M_B,b\models \Diamond p$. Then there is $a\in B$ with $bRa$ and $M_B,a\models p$.
By Definition~\ref{def:MB}(i), $M_B,a\models p$ means $a\le \val{p}$.
By Remark~\ref{rem:R-equivalences}(iii), choose an ultrafilter $U$ with $a,b\in U$.
Since $U$ is upward closed and $a\le \val{p}$, we have $\val{p}\in U$.
By Theorem~\ref{thm:ultrafilter-truth-lemma}, $V^{(B)}/U\models p$.

($\Leftarrow$) Conversely, suppose $U$ is an ultrafilter with $b\in U$ and $V^{(B)}/U\models p$.
By Theorem~\ref{thm:ultrafilter-truth-lemma}, $\val{p}\in U$.
Let $a:=\val{p}$. Then $a\in U$ and $b\in U$, hence $a\wedge b\in U$ and therefore $a\wedge b\neq 0$.
Thus $bRa$ by Definition~\ref{def:R}.
Also $M_B,a\models p$ holds because $a\le \val{p}$.
Therefore $M_B,b\models \Diamond p$.
\end{proof}

\section{Preliminary results}\label{sec:results}

Several modal principles hold for every complete Boolean algebra.

\begin{theorem}\label{thm:general-valid}
Let $B$ be a complete Boolean algebra. Then the following modal principles are in $T_H(M_B)$:
\[
\Box(\varphi\to\psi)\to (\Box\varphi\to \Box\psi)\qquad (K),
\]
\[
\neg\Diamond\varphi \leftrightarrow \Box\neg\varphi \qquad (\mathrm{Dual}),
\]
\[
\varphi \to \Box\Diamond\varphi \qquad (B),
\]
\[
\Diamond\Box\varphi \to \Box\Diamond\varphi\qquad (.2),
\]
and the ``symmetric'' principle
\[
\Diamond\Box\varphi \to \varphi \qquad (5^\ast).
\]
Consequently, the derived principle
\[
\Box\bigl(\Box(\varphi\to\Box\varphi)\to\varphi\bigr)\to(\Diamond\Box\varphi\to\varphi)\qquad (DM),
\]
also belongs to $T_H(M_B)$.
\end{theorem}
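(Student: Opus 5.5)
The plan is to check each principle directly from the Kripke clauses of Definition~\ref{def:MB}, using only two structural features of the co-consistency relation $R$ of Definition~\ref{def:R}. First, $R$ is symmetric, because $a\wedge b=b\wedge a$. Second, $1$ is $R$-related to every nonzero state, by Proposition~\ref{prop:1-related}. None of the arguments will use the special form of the atomic clause (i). So each principle will in fact be valid on the frame $\langle B,R\rangle$ for arbitrary $\varphi,\psi$, which is stronger than membership in $T_H(M_B)$. The isolated state $0$ needs no separate treatment: Lemma~\ref{lem:state0} shows that every $\Diamond$-antecedent fails and every $\Box$-formula holds there, and in any case the general arguments below cover it.

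(K) and (Dual) hold in every Kripke model, by unwinding the clauses for $\Box$ and $\Diamond$. For (B), suppose $M_B,w\models\varphi$ and $wRv$. Symmetry gives $vRw$, so $M_B,v\models\Diamond\varphi$; hence $M_B,w\models\Box\Diamond\varphi$. For $(5^\ast)$, suppose $M_B,w\models\Diamond\Box\varphi$, witnessed by some $v$ with $wRv$ and $M_B,v\models\Box\varphi$. By symmetry $vRw$, so $M_B,w\models\varphi$. This is simply the contrapositive form of (B).

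For (.2), suppose again that $wRv$ and $M_B,v\models\Box\varphi$. Then $v\neq0$, since $w\wedge v\neq 0$. By Proposition~\ref{prop:1-related} we get $vR1$, and therefore $M_B,1\models\varphi$. Now take any $u$ with $wRu$. Then $u\neq 0$, so $uR1$, and hence $M_B,u\models\Diamond\varphi$. This shows $M_B,w\models\Box\Diamond\varphi$. This is the only step that goes beyond symmetry, and I expect it to be the main point: symmetric frames alone do not validate (.2). The obstacle disappears once $1$ is used as a universal witness among the nonzero states.

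Finally, (DM) has consequent $\Diamond\Box\varphi\to\varphi$, which is exactly $(5^\ast)$. That consequent is true at every state, so the whole implication is true at every state regardless of its antecedent. Hence (DM) belongs to $T_H(M_B)$ as well.
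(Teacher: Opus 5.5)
Your proof is correct, and for (K), (Dual), (B), $(5^\ast)$ and (DM) it is the paper's argument. For (.2) you use a different common successor. Given $aRb$ with $M_B,b\models\Box\varphi$ and an arbitrary $c$ with $aRc$, the paper takes $d:=b\vee c$, which satisfies $bRd$ and $cRd$. You instead take the top element $1$, which by Proposition~\ref{prop:1-related} is accessible from every nonzero state. Both arguments work on the full algebra, because its state set is closed under joins and contains $1$. Yours is marginally simpler, since the witness does not depend on $c$.

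Your side remark, however, is false: symmetric frames alone \emph{do} validate (.2). If $wRv$ and $wRu$, then symmetry gives $vRw$ and $uRw$, so $w$ itself is a common successor. Convergence therefore holds in every symmetric frame. Equivalently, you had already proved $(5^\ast)$ and (B), and chaining them as $\Diamond\Box\varphi\to\varphi\to\Box\Diamond\varphi$ yields (.2) with no appeal to $1$ at all.

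The paper makes exactly this point in Lemma~\ref{lem:dot2-completeness-class}. The symmetry argument is more robust than both the join argument and your top-element argument. It survives restriction to an arbitrary nonempty $W\subseteq B^+$, which need contain neither $1$ nor $b\vee c$.
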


\begin{proof}
(K) and (Dual) are valid in all Kripke models.

For (B), assume $M_B,a\models \varphi$ and let $b$ be any state with $aRb$.
By symmetry of $R$ (since $a\wedge b\neq 0$ iff $b\wedge a\neq 0$), we have $bRa$.
Thus $M_B,b\models \Diamond\varphi$, so $M_B,a\models \Box\Diamond\varphi$.

For $(.2)$, assume $M_B,a\models \Diamond\Box\varphi$.
Choose $b$ with $aRb$ and $M_B,b\models \Box\varphi$.
Let $c$ be any state with $aRc$.
Put $d:=b\vee c$.
Since $aRc$, we have $c\neq 0$, and since $aRb$, we have $b\neq 0$.
Hence
\[
b\wedge d=b\neq 0
\qquad\text{and}\qquad
c\wedge d=c\neq 0,
\]
so $bRd$ and $cRd$.
Because $M_B,b\models \Box\varphi$ and $bRd$, we get $M_B,d\models \varphi$.
Thus $M_B,c\models \Diamond\varphi$.
As $c$ was arbitrary, $M_B,a\models \Box\Diamond\varphi$.

For $(5^\ast)$, assume $M_B,a\models \Diamond\Box\varphi$.
Then there exists $b$ with $aRb$ and $M_B,b\models \Box\varphi$.
By symmetry, $bRa$, so $M_B,b\models \Box\varphi$ implies $M_B,a\models \varphi$.

Finally, $(DM)$ is an immediate consequence since its consequent is $(5^\ast)$.
\end{proof}

On the other hand, the presence of the isolated state $0$ leads to immediate failures of some familiar principles.

\begin{theorem}\label{thm:general-fail}
Let $B$ be a complete Boolean algebra. Then the following modal principles are not in $T_H(M_B)$:
\[
\Box\varphi \to \varphi \qquad (T),
\qquad
\Box\varphi\to\Diamond\varphi \qquad (M),
\]
\[
\quad
\Box\bigl(\Box(\varphi\to \Box\varphi)\to \varphi\bigr)\to \varphi \qquad (\mathrm{Grz}).
\]
\end{theorem}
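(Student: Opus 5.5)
The plan is to refute each principle at the isolated state $0$, using Lemma~\ref{lem:state0}: at $0$ every boxed formula holds vacuously and every diamond formula fails. Since validity in $T_H(M_B)$ means truth at \emph{every} state of $B$, including $0$, one falsifying instance at $0$ suffices for each schema. The one subtlety is that atomic propositions are set-theoretic sentences, and by Lemma~\ref{lem:state0} every atomic $p$ is true at $0$. So we cannot take $\varphi$ atomic when we need $\varphi$ false at $0$. Instead we take $\varphi$ to be a modal formula that is false at $0$, namely $\varphi:=\Diamond\top$, where $\top$ is any tautology (e.g.\ $p\to p$), or equivalently $\varphi:=\Diamond p$ for any $p\in\Sent_{\in}$. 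By Lemma~\ref{lem:state0}(ii), $M_B,0\not\models\varphi$.

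For (T), at $0$ we have $M_B,0\models\Box\varphi$ by Lemma~\ref{lem:state0}(i). Since $M_B,0\not\models\varphi$, the instance $\Box\Diamond\top\to\Diamond\top$ fails at $0$.

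For (M), take any $\varphi$, even atomic. Then $M_B,0\models\Box\varphi$ vacuously, while $M_B,0\not\models\Diamond\varphi$ by Lemma~\ref{lem:state0}(ii). So $\Box\varphi\to\Diamond\varphi$ fails at $0$.

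For (Grz), again take $\varphi:=\Diamond\top$. The antecedent $\Box\bigl(\Box(\varphi\to\Box\varphi)\to\varphi\bigr)$ is a boxed formula, so it holds at $0$ by Lemma~\ref{lem:state0}(i). The consequent $\varphi$ fails at $0$, so the instance fails at $0$.

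The only real point to check is that $\Diamond\top$ (or $\Diamond p$) is indeed false at $0$, which is immediate since $0$ has no $R$-successors. Note that no assumption on $B$ is needed beyond completeness: even for $B=\Btwo$ the state $0$ exists and is isolated. Hence none of (T), (M), (Grz) lies in $T_H(M_B)$.
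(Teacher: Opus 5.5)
Your proof is correct and follows the paper's route: each schema is refuted at the isolated state $0$ using Lemma~\ref{lem:state0}. The only difference is the formula you use to make $\varphi$ false at $0$ --- the paper takes $\varphi:=\neg p$ (false at $0$ because every atom holds there), whereas you take $\Diamond\top$ (false because $0$ has no successors) --- and you also spell out the (Grz) case, which the paper only says works ``similarly''.
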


\begin{proof}
Let $p\in\Sent_{\in}$ be any sentence and set $\varphi:=\neg p$.
By Lemma~\ref{lem:state0}, $M_B,0\models \Box\varphi$ but $M_B,0\not\models \varphi$.
Hence $(T)$ fails at $0$.
Also $M_B,0\not\models \Diamond\varphi$ by Lemma~\ref{lem:state0}$\,$(ii), so $(M)$ fails at $0$ as well.
For (Grz) the failure can be witnessed similarly by using the dead-end behavior of $0$.
\end{proof}

\begin{remark}\label{rem:nonzero-bridge}
If one works with $T_H^\sharp(M_B)$ (validity on $B\setminus\{0\}$), then $(T)$ and $(M)$ both hold because
$R$ is reflexive on $B\setminus\{0\}$ (indeed $aRa$ for all $a\ne 0$). This nonzero-state perspective
underlies the translation-based completeness theorem of Section~\ref{sec:completeness}.
\end{remark}

At this stage one should notice that different Boolean algebras can give different theories.

\begin{theorem}\label{thm:different-theories}
There exist complete Boolean algebras $B$ and $B'$ such that $T_H(M_B)\ne T_H(M_{B'})$.
\end{theorem}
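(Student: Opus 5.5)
The natural witnesses are the two smallest algebras, $B=\Btwo$ and $B'=\Bfour$. We exhibit a modal formula valid in $M_{\Btwo}$ but not in $M_{\Bfour}$. In $\Btwo$ the frame consists of the isolated point $0$ and the reflexive point $1$. By Theorem~\ref{thm:atomic-not-faithful} and the fact that $\Btwo$ is trivially two-valued, every $p\in\Sent_{\in}$ has $\val{p}\in\{0,1\}$ in either algebra.

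First consider $\Btwo$. At $1$ the only accessible state is $1$ itself, so $\Diamond\varphi\to\Box\varphi$ holds at $1$. At $0$ it holds vacuously, since $\Box\varphi$ is true there by Lemma~\ref{lem:state0}. Hence $\Diamond\varphi\to\Box\varphi\in T_H(M_{\Btwo})$ for every $\varphi$.

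Now consider $\Bfour=\{0,a,b,1\}$, where $1$ sees $a$, $b$ and $1$, while $a$ sees only $a$ and $1$. We need a formula true at $a$ and false at $b$, using only atomic truth values in $\{0,1\}$. The plan is to build such a formula from the frame structure and avoid relying on atoms.

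Take $p$ with $\val{p}=0$, for example $p:=\neg(\emptyset=\emptyset)$. Then $p$ holds only at $0$, because $x\le 0$ iff $x=0$. Consequently $\Diamond p$ fails everywhere, since $0$ is never accessible. This alone does not separate $a$ from $b$: the states $a$ and $b$ are indistinguishable by $\Sent_{\in}$-definable properties because of the automorphism swapping them. So instead we break the pattern at $1$. Let $\varphi:=\Diamond\Box\neg p$, or more simply compare worlds by their accessibility structure. The state $1$ sees three states, which is a count no modal formula can detect directly. So we use a different test.

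Take $\chi:=\Diamond\varphi\to\Box\varphi$ and check it at $1$ in $\Bfour$, with $\varphi$ any formula whose truth differs between $1$ and $a$. Such a formula exists: $\Diamond\Diamond\top$ is true at every nonzero state, so it does not help, and bisimulation shows that $1$, $a$ and $b$ are all bisimilar, because each is nonzero and sees only nonzero states, including itself. This is the \emph{main obstacle}. With parameter-free atoms valued in $\{0,1\}$, the models $M_{\Btwo}$ and $M_{\Bfour}$ are bisimilar via the map $0\mapsto0$ and nonzero $\mapsto1$, so their theories coincide.

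Hence the witness must come from an algebra that is not atomic, where faithfulness can fail to force two-valuedness. Take $B'$ to be a complete algebra such as the Cohen algebra, together with a sentence $p$ (for example CH) with $0<\val{p}<1$. Put $c=\val{p}$. Then $M_{B'},c\models p$, and since $c^\ast\neq0$ and $c\wedge c^\ast=0$ we get $M_{B'},c^\ast\not\models p$. The state $1$ sees $c^\ast$, so $M_{B'},1\models\Diamond p$ (witnessed by $c$) but $M_{B'},1\not\models\Box p$. Thus $\Diamond p\to\Box p\notin T_H(M_{B'})$, while it lies in $T_H(M_{\Btwo})$. This gives $T_H(M_{\Btwo})\ne T_H(M_{B'})$.

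The only genuine set-theoretic input is the existence of a sentence with intermediate value, which follows from the independence of CH via Cohen forcing. This is the step I expect to need citing.
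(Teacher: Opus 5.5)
Your final argument is the paper's proof with a different separating formula. The paper refutes $p\to\Box p$ at the state $\val{p}$, using the successor $1$. You refute $\Diamond p\to\Box p$ at $1$, using the successors $c$ and $c^\ast$. Both checks are correct, and so is the validity of your formula in $M_{\Btwo}$. Your bisimulation remark about $\Bfour$ is also correct, and it shows that $B'$ must realize an intermediate value by a parameter-free sentence. The write-up should still drop the exploratory claims you later retract, such as the opening choice of $\Bfour$ and ``Such a formula exists''.

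The genuine gap is in the one set-theoretic input. The Cohen algebra with CH does not work: non-atomicity is necessary but not sufficient. The Cohen algebra is homogeneous, and every automorphism of $B$ fixes $\val{\sigma}$ for parameter-free $\sigma$; homogeneity then rules out $0<\val{\sigma}<1$. Concretely, the algebra adding $\aleph_2$ Cohen reals gives $\llbracket\mathrm{CH}\rrbracket=0$ outright. The independence of CH gives you two algebras, one forcing CH and one forcing $\neg\mathrm{CH}$, not one algebra in which CH is undecided.

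The repair is to glue the two algebras together:
\begin{itemize}
\item Let $B_1$ be the completion of $\mathrm{Coll}(\omega_1,2^{\omega})$. It is countably closed, adds no reals and collapses the continuum to $\aleph_1$, so $\llbracket\mathrm{CH}\rrbracket_{B_1}=1$.
\item Let $B_2$ be the Cohen algebra adding $\aleph_2$ reals, so $\llbracket\mathrm{CH}\rrbracket_{B_2}=0$.
\item Put $B'=B_1\times B_2$.
\end{itemize}
Boolean values of parameter-free sentences in a product are computed coordinatewise, so $\llbracket\mathrm{CH}\rrbracket_{B'}=(1,0)$, which lies strictly between $0$ and $1$. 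With this $B'$ the rest of your argument goes through unchanged. This product construction is also what the paper's parenthetical ``a forcing for which $p$ is independent'' must be taken to mean.
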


\begin{proof}
Let $B=\Btwo$. In $M_{\Btwo}$, the frame has only two states, and $1$ accesses only itself.
One checks that the formula $p\to \Box p$ is valid in $M_{\Btwo}$.

Now let $B'$ be a complete Boolean algebra and choose a set-theoretic sentence $p\in\Sent_{\in}$ such that
$0<\val{p}<1$ in $B'$. (For example, choose $B'$ arising from a forcing for which $p$ is independent.)
Let $a:=\val{p}$.
Then $M_{B'},a\models p$ but $M_{B'},1\not\models p$, and since $aR1$ we have $M_{B'},a\not\models \Box p$.
Hence $p\to\Box p$ fails in $M_{B'}$, so $T_H(M_B)\ne T_H(M_{B'})$.
\end{proof}

\subsection{Complete atomic Boolean algebras}

When $B$ is complete and atomic, the restriction that atomic propositions are \emph{parameter-free}
set-theoretic sentences has a strong consequence: by Theorem~\ref{thm:atomic-not-faithful} every such
sentence has Boolean value $0$ or $1$. This collapses the atomic behavior and yields extra modal validities.

\begin{theorem}\label{thm:atomic-uniformity}
Let $B$ be a complete atomic Boolean algebra and let $\varphi\in\Lbox$.
Then either $M_B,a\models \varphi$ for every $a\in B\setminus\{0,1\}$, or $M_B,a\not\models \varphi$ for every
$a\in B\setminus\{0,1\}$.
\end{theorem}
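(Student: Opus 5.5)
Since $B$ is complete and atomic, Theorem~\ref{thm:atomic-not-faithful} gives $\val{p}\in\{0,1\}$ for every $p\in\Sent_{\in}$. If $B$ has at most two elements, $B\setminus\{0,1\}$ is empty and there is nothing to prove, so assume $B$ has more than two elements. The plan is a simultaneous induction on $\varphi$, proving a slightly stronger statement that also controls the values at $0$ and $1$. These values enter when we evaluate $\Box$ and $\Diamond$ from a middle state.

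\textbf{Invariant.} For a formula $\varphi$, let $\tau(\varphi)$ record:
\begin{enumerate}[label=(\alph*)]
\item whether $\varphi$ holds at $0$;
\item whether $\varphi$ holds at $1$;
\item whether $\varphi$ holds at all middle states $a\in B\setminus\{0,1\}$ or at none of them.
\end{enumerate}
The claim is that (c) is always uniform, so each $\varphi$ determines a triple of truth values. Proving this by induction gives the theorem.

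\textbf{Base case.} For an atomic $p$, we have $M_B,a\models p$ iff $a\le\val{p}$. If $\val{p}=1$, every state satisfies $p$. If $\val{p}=0$, only $0$ satisfies $p$, so no middle state does. Either way (c) is uniform.

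\textbf{Boolean connectives.} These are pointwise, so uniformity on the middle states is preserved immediately.

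\textbf{Modal step.} Fix a middle state $a$, so $0<a<1$. Its $R$-successors are all nonzero $c$ with $a\wedge c\ne 0$. These include $1$ (Proposition~\ref{prop:1-related}) and $a$ itself, and exclude $0$. So the successor set of $a$ meets the middle region (at $a$) and contains $1$.

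By the induction hypothesis, the truth value of $\psi$ on the successors of $a$ is determined by the pair ($\psi$ at $1$, $\psi$ on the middle states). Hence:
\begin{itemize}
\item $M_B,a\models\Diamond\psi$ iff $\psi$ holds at $1$ or on the middle states;
\item $M_B,a\models\Box\psi$ iff $\psi$ holds at $1$ and on the middle states.
\end{itemize}
The case where $\psi$ holds on the middle states uses $a$ itself as the witness for $\Diamond$. The converse direction for $\Box$ uses only that every successor is either $1$ or a middle element. Neither condition depends on which middle $a$ we chose, so uniformity is preserved.

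\textbf{Closing the induction.} The values at $0$ and $1$ are automatically determined by $\varphi$:
\begin{itemize}
\item at $0$, by Lemma~\ref{lem:state0};
\item at $1$, since $1$ sees exactly the nonzero states, namely $1$ and the middle ones.
\end{itemize}
So the strengthened triple invariant carries through.

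\textbf{Main obstacle.} The key point is the modal step. One must notice that every middle state is reflexive and sees $1$. Therefore the quantifier over successors never depends on the specific incompatibilities among middle elements, even though $R$ is not uniform there. For instance, complementary atoms are not related, yet this is harmless because uniformity of $\psi$ across all middle states makes the exact successor set irrelevant beyond "meets the middle, contains $1$".
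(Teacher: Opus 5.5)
Your proof is correct and follows essentially the same route as the paper: induction on $\varphi$, with the atomic case supplied by Theorem~\ref{thm:atomic-not-faithful}, and the modal cases resting on the facts that every middle state is reflexive, sees $1$, and sees only nonzero states. Your explicit ``iff'' characterizations of $\Box\psi$ and $\Diamond\psi$ at a middle state settle both directions at once, which is a bit tidier than the paper's ``the converse is similar''; tracking the values at $0$ and $1$ is harmless but unnecessary, since a single state needs no uniformity claim.
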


\begin{proof}
We proceed by induction on the complexity of $\varphi$.

\smallskip\noindent\emph{Base case.}
If $\varphi$ is atomic, say $\varphi=p\in\Sent_{\in}$, then by Theorem~\ref{thm:atomic-not-faithful} we have
$\val{p}\in\{0,1\}$. If $\val{p}=0$ then no $a\ne 0$ satisfies $p$; if $\val{p}=1$ then every $a$ satisfies $p$.
In either case, all $a\in B\setminus\{0,1\}$ agree.

\smallskip\noindent\emph{Boolean connectives.}
The induction step for $\neg,\wedge,\vee,\to$ is immediate from the induction hypothesis.

\smallskip\noindent\emph{Modal case $\Box\psi$.}
Assume the induction hypothesis holds for $\psi$.
If $M_B,a\models \Box\psi$ for some $a\in B\setminus\{0,1\}$, then in particular $aRa$ and hence $M_B,a\models\psi$.
By the induction hypothesis, $\psi$ holds at every element of $B\setminus\{0,1\}$.
Also $aR1$ (Proposition~\ref{prop:1-related}), so $\Box\psi$ at $a$ implies $M_B,1\models\psi$.
Thus $\psi$ holds at every nonzero state, and therefore $\Box\psi$ holds at every $b\in B\setminus\{0,1\}$.
Conversely, if $\Box\psi$ fails at some $a\in B\setminus\{0,1\}$, then either $1$ or some nontrivial successor
witnesses failure of $\psi$, and the same reasoning shows $\Box\psi$ fails at all $b\in B\setminus\{0,1\}$.

\smallskip\noindent\emph{Modal case $\Diamond\psi$.}
If $M_B,a\models\Diamond\psi$ for some $a\in B\setminus\{0,1\}$, then there is $w$ with $aRw$ and $M_B,w\models\psi$.
Since $a\neq 0$, the witness $w$ cannot be $0$.
If $w\in B\setminus\{0,1\}$ then by induction $\psi$ holds at all nontrivial states, and hence every
$b\in B\setminus\{0,1\}$ satisfies $\Diamond\psi$ (witness $b$ itself).
If instead $w=1$, then $M_B,1\models\psi$; and since every $b\ne 0$ satisfies $bR1$, again every
$b\in B\setminus\{0,1\}$ satisfies $\Diamond\psi$.
The converse direction is similar.
\end{proof}

\begin{theorem}\label{thm:atomic-validities}
Let $B$ be a complete atomic Boolean algebra. In addition to the principles of
Theorem~\ref{thm:general-valid}, the following modal principles belong to $T_H(M_B)$:
\[
\Box\varphi \to \Box\Box\varphi\qquad (4),
\qquad
\Diamond\varphi \to \Box\Diamond\varphi\qquad (5),
\]
\[
\Diamond\Box\varphi \to (\varphi\to \Box\varphi)\qquad (W5).
\]
\end{theorem}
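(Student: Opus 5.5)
The plan is to prove a sharper form of Theorem~\ref{thm:atomic-uniformity}: every modal formula $\varphi$ is \emph{constant on all of} $B^+=B\setminus\{0\}$, not merely on $B\setminus\{0,1\}$. Once this holds, each of (4), (5) and (W5) reduces to a short case split between the state $0$ and the nonzero states.

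\emph{Step 1 (uniformity on $B^+$).} I will prove by induction on $\varphi$ that either $M_B,a\models\varphi$ for all $a\in B^+$, or $M_B,a\not\models\varphi$ for all $a\in B^+$.
\begin{itemize}
\item For atomic $p\in\Sent_{\in}$, Theorem~\ref{thm:atomic-not-faithful} gives $\val{p}\in\{0,1\}$. So $p$ holds at every nonzero state if $\val{p}=1$, and at none if $\val{p}=0$.
\item The Boolean connectives preserve constancy on $B^+$ trivially.
\item For $\Box\psi$, fix $b\in B^+$. Every $R$-successor of $b$ is nonzero, and $bRb$ holds. By the induction hypothesis $\psi$ is constant on $B^+$. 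Hence $M_B,b\models\Box\psi$ iff $\psi$ is true on $B^+$, a condition that does not depend on $b$.
\item For $\Diamond\psi$, fix $b\in B^+$. Again $bRb$ holds and all successors of $b$ are nonzero. So $M_B,b\models\Diamond\psi$ iff $\psi$ is true on $B^+$, which is again independent of $b$.
\end{itemize}
Only reflexivity of $R$ on $B^+$ is used, together with the fact that $0$ is never a successor of a nonzero state. This also covers the degenerate case $B=\Btwo$, where $B^+=\{1\}$.

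\emph{Step 2 (the state $0$).} By Lemma~\ref{lem:state0}, every formula $\Box\chi$ holds at $0$ and every formula $\Diamond\chi$ fails there. Consequently:
\begin{itemize}
\item (4) holds at $0$ because its consequent $\Box\Box\varphi$ is true there.
\item (5) holds at $0$ because its antecedent $\Diamond\varphi$ is false there.
\item (W5) holds at $0$ because its antecedent $\Diamond\Box\varphi$ is false there.
\end{itemize}

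\emph{Step 3 (nonzero states).} Let $a\in B^+$, and recall from the proofs in Step 1 that, at any nonzero state, $\Box\chi$ holds iff $\Diamond\chi$ holds iff $\chi$ is true on $B^+$.
\begin{itemize}
\item For (4): if $M_B,a\models\Box\varphi$, then $\varphi$ is true on $B^+$. Hence $\Box\varphi$ is true at every nonzero state, and therefore $\Box\Box\varphi$ holds at $a$.
\item For (5): if $M_B,a\models\Diamond\varphi$, the witness is nonzero, so by Step 1 $\varphi$ is true on $B^+$. Then $\Diamond\varphi$ holds at every nonzero state (each sees itself), so $\Box\Diamond\varphi$ holds at $a$.
\item For (W5): if $M_B,a\models\Diamond\Box\varphi$, some nonzero state satisfies $\Box\varphi$. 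By Step 1 $\varphi$ is then true on $B^+$, so $M_B,a\models\Box\varphi$, and the implication $\varphi\to\Box\varphi$ holds at $a$.
\end{itemize}

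The only real work, and the step I expect to need care, is Step 1. The point is that the uniformity argument must include the top element $1$. This works because each nonzero state sees both itself and $1$, and atomic values are only ever $0$ or $1$, so the state $1$ cannot behave differently from the intermediate states.
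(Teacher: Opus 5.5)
Your proof is correct, and it takes a somewhat different route from the paper's. The paper relies on Theorem~\ref{thm:atomic-uniformity}, which asserts constancy only on $B\setminus\{0,1\}$. It therefore has to handle the top element separately in each of (4), (5) and (W5), using $aR1$ and $1Rc$. It also has to split off the case $B=\Btwo$, where $B\setminus\{0,1\}$ is empty and that theorem says nothing.

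You instead prove the sharper statement that every formula is constant on all of $B^+$. This is legitimate because, for $a\neq 0$, one has $a\le\val{p}$ iff $\val{p}=1$, so the top element $1$ already agrees with the intermediate states at the atomic level. The modal steps then use only reflexivity on $B^+$ and the fact that $0$ is never a successor of a nonzero state. This removes every case distinction involving $1$ or $\Btwo$, leaving only the split between $0$ and $B^+$.

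Your approach also gives more than the theorem states. Your Step 1 together with Lemma~\ref{lem:state0}(i) shows that the single schema $\varphi\to\Box\varphi$ lies in $T_H(M_B)$ for every complete atomic $B$. Then (4) and (5) are its instances with $\Box\varphi$ and $\Diamond\varphi$ in place of $\varphi$, and (W5) holds trivially because its consequent is that schema.

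The paper's route has the advantage of reusing a result it had already stated. Yours is shorter and uniform, and it makes explicit that the nonzero part of $M_B$ is modally indistinguishable from a single reflexive point.
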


\begin{proof}
\emph{(4).}
Assume $M_B,a\models \Box\varphi$.
If $a=0$ there is nothing to show. Otherwise $a\ne 0$, hence $aRa$ and $aR1$.
Thus $M_B,a\models\varphi$ and $M_B,1\models\varphi$.
If $B=\Btwo$ then $R$ is trivially transitive and (4) holds.
If $B\ne\Btwo$, choose any $w\in B\setminus\{0,1\}$ with $aRw$ (take $w=a$ if $a\ne 1$, otherwise take any
nontrivial element). Then $M_B,w\models\varphi$, and by Theorem~\ref{thm:atomic-uniformity},
$\varphi$ holds at every element of $B\setminus\{0,1\}$.
Together with $M_B,1\models\varphi$, this shows that $\varphi$ holds at every nonzero state, hence every
nonzero state satisfies $\Box\varphi$. In particular, every successor of $a$ satisfies $\Box\varphi$, so
$M_B,a\models \Box\Box\varphi$.

\medskip\noindent
\emph{(5).}
Fix $a\in B$ and assume $M_B,a\models \Diamond\varphi$.
Then there exists $w\in B$ such that $aRw$ and $M_B,w\models \varphi$.

\smallskip\noindent\emph{Case 1: $w=1$.}
Let $b$ be any element with $aRb$.
By Proposition~\ref{prop:1-related} we have $bR1$, and since $M_B,1\models\varphi$, it follows that
$M_B,b\models\Diamond\varphi$ (witness $1$).  As $b$ was arbitrary, $M_B,a\models \Box\Diamond\varphi$.

\smallskip\noindent\emph{Case 2: $w\neq 1$.}
Then $w\in B\setminus\{0,1\}$.
By Theorem~\ref{thm:atomic-uniformity}, $\varphi$ holds at \emph{every} element of $B\setminus\{0,1\}$.
Let $b$ be any element with $aRb$.
If $b\neq 1$, then $b\in B\setminus\{0,1\}$, hence $M_B,b\models\varphi$, and since $bRb$ we get
$M_B,b\models\Diamond\varphi$.
If $b=1$, choose any $c\in B\setminus\{0,1\}$ (which exists when $B\ne \Btwo$). Then $1Rc$ and $M_B,c\models\varphi$,
so $M_B,1\models\Diamond\varphi$.
In either subcase $M_B,b\models\Diamond\varphi$, and since $b$ was arbitrary we obtain
$M_B,a\models\Box\Diamond\varphi$.

\medskip\noindent
\emph{(W5).}
Assume $M_B,a\models\Diamond\Box\varphi$ and $M_B,a\models\varphi$.
Choose $b$ with $aRb$ and $M_B,b\models \Box\varphi$.
Then $b\ne 0$, hence $bRb$ and $bR1$, so $M_B,b\models\varphi$ and $M_B,1\models\varphi$.
If $b\in B\setminus\{0,1\}$, then by Theorem~\ref{thm:atomic-uniformity} we have
$M_B,c\models\varphi$ for all $c\in B\setminus\{0,1\}$.
Together with $M_B,1\models\varphi$, this shows $\varphi$ holds at every nonzero state, and therefore
$M_B,a\models\Box\varphi$.
If instead $b=1$, then $M_B,1\models\Box\varphi$ already implies that $\varphi$ holds at every nonzero state,
so again $M_B,a\models\Box\varphi$.

\end{proof}

\section{Completeness}\label{sec:completeness}

Section~\ref{sec:extended-semantics} was developed with respect to a fixed identification of propositional variables with parameter-free
set-theoretic sentences. For completeness, however, one should allow arbitrary assignments of
Boolean truth values to propositional variables. This leads naturally to a translation-based
semantics.

There is one further point to isolate. The Boolean value $0$ behaves degenerately: it has no
$R$-successors and it satisfies every atomic proposition, since $0\leq a$ for all $a\in B$. Thus the
present completeness theorem is obtained on the nonzero part of the algebra. In the terminology of Definition~\ref{def:theories} we are here interested in the notion of validity$^\sharp$. Accordingly, throughout
this section we work with
\[
B^+\ :=\ B\setminus\{0\}.
\]
On $B^+$ the co-consistency relation is reflexive as well as symmetric, and the resulting modal
logic is exactly $\KTB$.

\begin{definition}\label{def:B-translation-nonzero}
Fix a complete Boolean algebra $B$ and its Boolean-valued universe $V^{(B)}$.
\begin{enumerate}
    \item A \emph{$B$-translation} is a map $T:\Prop\to\Sent(L_B)$.
    \item Given $T$, define the induced valuation $v_T:\Prop\to B$ by
    \[
    v_T(p)\ :=\ \llbracket T(p)\rrbracket_B.
    \]
    \item If $W\subseteq B^+$ is nonempty, let $R$ be the inherited co-consistency relation on $W$:
    \[
    aRb \quad\Longleftrightarrow\quad a\wedge b\neq 0 \qquad (a,b\in W).
    \]
    We write $M_{B,W,T}=(W,R,v_T)$ for the corresponding Kripke model, where atomic satisfaction
    is given by
    \[
    M_{B,W,T},b\models p \quad\Longleftrightarrow\quad b\leq v_T(p).
    \]
\end{enumerate}
\end{definition}

If parameters are allowed, the Boolean truth-value map is surjective onto $B$.

\begin{lemma}\label{lem:truth-value-surjective}
For every $a\in B$ there exists an $L_B$-sentence $\sigma_a$ such that
\[
\llbracket \sigma_a\rrbracket_B=a.
\]
\end{lemma}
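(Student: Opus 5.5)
We plan to exhibit an explicit name in $V^{(B)}$ whose membership statement has Boolean value exactly $a$. The natural choice is to use the canonical names $\check{x}$ of ground-model sets, defined by recursion as $\check{x}=\{\langle \check{y},1\rangle : y\in x\}$, i.e.\ $\mathrm{dom}(\check{x})=\{\check{y}:y\in x\}$ with constant value $1$. Fix $a\in B$ and define the name
\[
u_a:=\{\langle \check{\varnothing},a\rangle\},
\]
that is, the function with domain $\{\check{\varnothing}\}$ (where $\check{\varnothing}$ is the empty function, which lies in $V^{(B)}_1$) and value $a$. Then $u_a\in V^{(B)}$. We take $\sigma_a$ to be the $L_B$-sentence $\check{\varnothing}\in u_a$, using the constants for $\check\varnothing$ and $u_a$.

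The computation proceeds directly from Definition~\ref{def:boolean-interpretation}. By the membership clause,
\[
\val{\check{\varnothing}\in u_a}=\bigvee_{x\in\mathrm{dom}(u_a)}\bigl(u_a(x)\wedge\val{x=\check{\varnothing}}\bigr)=a\wedge\val{\check{\varnothing}=\check{\varnothing}}.
\]
It remains to check that $\val{\check\varnothing=\check\varnothing}=1$. Since $\mathrm{dom}(\check\varnothing)=\varnothing$, both meets in the equality clause are empty meets, hence equal to $1$, so the value is $1\wedge 1=1$. This gives $\val{\sigma_a}=a$.

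There is no real obstacle. The only point needing care is to confirm that $u_a$ genuinely belongs to $V^{(B)}$ under the recursive definition, namely that its domain lies inside some earlier stage $V^{(B)}_\xi$. One must also keep track of empty suprema and infima: the empty meet is $1$, which is where completeness of $B$ is used trivially. If one prefers to avoid the empty-domain case, an alternative is to use $\sigma_a:=(u_a=\check{1})$ with $\check 1=\{\langle\check\varnothing,1\rangle\}$. In that case a short calculation using $\val{\check\varnothing\in\check 1}=1$ again yields the value $(1\to a)\wedge(a\to 1)=a$.
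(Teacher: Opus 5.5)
Your proposal is correct and is essentially the paper's proof: the paper also takes the name $v_a$ with domain $\{u\}$, where $u$ is the canonical name for $\varnothing$, sets $v_a(u)=a$, and computes $\val{u\in v_a}=a\wedge 1=a$. You additionally justify $\val{\check\varnothing=\check\varnothing}=1$ by noting that both meets are empty, a step the paper leaves implicit.
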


\begin{proof}
Fix $a\in B$. Let $u$ be the canonical $B$-name for the empty set, and let $v_a$ be the $B$-name
with $\mathrm{dom}(v_a)=\{u\}$ and $v_a(u)=a$. Then, by Definition~\ref{def:boolean-interpretation},
\[
\llbracket u\in v_a\rrbracket_B
=\bigvee_{x\in\mathrm{dom}(v_a)}\bigl(v_a(x)\wedge \llbracket x=u\rrbracket_B\bigr)
=v_a(u)\wedge 1
=a.
\]
Since $L_B$ contains constant symbols for $u$ and $v_a$, the sentence $\sigma_a$ may be taken to
be ``$u\in v_a$''.
\end{proof}

Let $\KTB$ denote the smallest normal modal logic extending $\mathsf{K}$ by the axioms
\[
(T)\quad \Box\varphi\to\varphi
\qquad\text{and}\qquad
(B)\quad \varphi\to\Box\Diamond\varphi.
\]
Equivalently, $\KTB$ is the logic of reflexive symmetric Kripke frames.

\begin{theorem}\label{thm:KTB-completeness}
For every modal formula $\varphi\in \Lbox$,
\[
\KTB\vdash \varphi
\quad\Longleftrightarrow\quad
M_{B,W,T}\models \varphi
\]
for all complete Boolean algebras $B$, all nonempty sets $W\subseteq B^+$, and all
$B$-translations $T$.
\end{theorem}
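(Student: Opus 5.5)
The plan is to prove the two directions separately. Soundness will follow from frame properties of the co-consistency relation. Completeness will follow from the finite model property of $\KTB$, together with a representation of every finite reflexive symmetric Kripke model as some $M_{B,W,T}$.

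\emph{Soundness.} Fix $B$, a nonempty $W\subseteq B^+$, and a $B$-translation $T$. On $W$ the relation $R$ is symmetric, since $a\wedge b=b\wedge a$. It is also reflexive, since $a\wedge a=a\neq 0$ for $a\in W\subseteq B^+$. So $(W,R)$ is a reflexive symmetric frame. Every theorem of $\KTB$ is valid on such frames: $(K)$ holds on all frames, $(T)$ holds by reflexivity, $(B)$ holds by symmetry as in the proof of Theorem~\ref{thm:general-valid}, and validity is preserved by modus ponens and necessitation. Hence $M_{B,W,T}\models\varphi$ whenever $\KTB\vdash\varphi$. The particular valuation $v_T$ plays no role here.

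\emph{Completeness.} Suppose $\KTB\nvdash\varphi$. I will invoke the standard fact that $\KTB$ has the finite model property with respect to reflexive symmetric frames. It can be obtained by filtrating the canonical model through the subformulas of $\varphi$, using the smallest filtration, which preserves both reflexivity and symmetry. This gives a finite model $(F,S,V)$ with $S$ reflexive and symmetric and a world $x_0$ with $x_0\not\models\varphi$. The key step is to realize $(F,S)$ isomorphically inside some $B^+$. Let $E$ be the set of $S$-edges, viewed as unordered pairs $\{x,y\}$ with $xSy$; loops $\{x\}=\{x,x\}$ are included because $S$ is reflexive. Take $B=\Pow(E)$ and, for each $x\in F$, put
\[
a_x=\{e\in E: x\in e\}.
\]
Each $a_x$ is nonempty, since it contains the loop $\{x\}$. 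The map $x\mapsto a_x$ is injective, since $\{x\}\in a_y$ forces $y=x$. Moreover $a_x\cap a_y\neq\varnothing$ iff some edge contains both $x$ and $y$, i.e.\ iff $xSy$. So with $W=\{a_x:x\in F\}\subseteq B^+$, the map $x\mapsto a_x$ is a frame isomorphism from $(F,S)$ onto $(W,R)$.

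\emph{Valuation.} For each $p$ set $c_p=\bigcup_{x\in V(p)}a_x\in B$. By Lemma~\ref{lem:truth-value-surjective}, choose $T(p):=\sigma_{c_p}$, so that $v_T(p)=c_p$. I then check that $a_y\le c_p$ iff $y\in V(p)$. If $y\in V(p)$, this is immediate. If $y\notin V(p)$, the loop $\{y\}\in a_y$ lies in no $a_x$ with $x\neq y$, so $\{y\}\notin c_p$ and $a_y\not\le c_p$. Hence $x\mapsto a_x$ is an isomorphism of Kripke models, and a routine induction on formulas shows that truth is preserved. In particular $M_{B,W,T},a_{x_0}\not\models\varphi$, which is the contrapositive of the completeness direction.

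I expect the main obstacle to be the embedding step. Reflexive symmetric frames need not be realizable by an arbitrary algebra, and the atoms must be chosen carefully so that compatibility encodes exactly the edge relation, with distinct worlds kept apart. The loop-atom trick above handles this. The finite model property for $\KTB$ is standard and can be cited rather than reproved.
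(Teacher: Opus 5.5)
Your proof is correct and follows essentially the same route as the paper. Soundness comes from reflexivity and symmetry of $R$ on $B^+$. Completeness comes from the finite model property of $\KTB$ together with an embedding of a finite reflexive symmetric countermodel into a power set algebra built from edge-tokens, realized by translations via Lemma~\ref{lem:truth-value-surjective}. The differences are cosmetic. Your loop atoms $\{x\}$ play the role of the paper's markers $m_w$. Your valuation $c_p=\bigcup_{x\in V(p)}a_x$ replaces the paper's $E\cup\{m_w:w\in V_0(p)\}$, which gives you an isomorphism of Kripke models directly instead of an intermediate model that agrees only on $\mathrm{Var}(\varphi)$.
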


\begin{proof}
\emph{Soundness.}
Let $B$ be a complete Boolean algebra, let $W\subseteq B^+$ be nonempty, and let $T$ be a
$B$-translation. For every $a\in W$ we have $a\neq 0$, hence
\[
a\wedge a=a\neq 0,
\]
so $aRa$. Thus the frame of $M_{B,W,T}$ is reflexive. It is also symmetric, since
\[
aRb \iff a\wedge b\neq 0 \iff b\wedge a\neq 0 \iff bRa.
\]
Therefore every instance of $(T)$ and $(B)$ is valid in $M_{B,W,T}$, and axiom $(K)$ together
with necessitation is sound on all Kripke frames. Hence every theorem of $\KTB$ is valid
in every model $M_{B,W,T}$.

\medskip
\noindent\emph{Completeness.}
Assume $\KTB\nvdash\varphi$. Since $\KTB$ is complete with respect to the class of
reflexive symmetric frames and has the finite model property, there exist a finite reflexive
symmetric Kripke model
\[
M=(W_0,R_0,V_0)
\]
and a world $w_0\in W_0$ such that
\[
M,w_0\not\models\varphi.
\]

We now construct a Boolean-algebraic model of the required form that is isomorphic to $M$ on the
variables occurring in $\varphi$.
Let
\[
E:=\{e_{\{u,v\}}:u,v\in W_0,\ u\neq v,\ uR_0v\}
\]
be a set of \emph{edge-tokens}, one for each undirected edge of the symmetric frame, and let
\[
M_1:=\{m_w:w\in W_0\}
\]
be a disjoint set of \emph{markers}, one marker for each world. Put
\[
X:=E\cup M_1
\qquad\text{and}\qquad
B:=\Pow(X).
\]
Then $B$ is a complete Boolean algebra.

For each world $w\in W_0$, define the Boolean value
\[
b_w:=\{m_w\}\cup\{e_{\{w,v\}}:v\in W_0,\ v\neq w,\ wR_0v\}.
\]
Intuitively, $b_w$ records the marker of $w$ together with the tokens of all nontrivial edges
incident with $w$.

Since $m_w\in b_w$, each $b_w$ is nonempty. Hence
\[
W:=\{b_w:w\in W_0\}\subseteq B^+.
\]
On $W$ we consider the inherited co-consistency relation
\[
bRc \iff b\wedge c\neq 0.
\]
Because $B=\Pow(X)$, meet is intersection, so this is simply
\[
bRc \iff b\cap c\neq\varnothing.
\]

Define
\[
f:W_0\to W,\qquad f(w):=b_w.
\]

\begin{claim}
The map $f$ is a frame isomorphism from $(W_0,R_0)$ onto $(W,R)$.
\end{claim}

\begin{proof}[Proof of the claim]
First, $f$ is injective, because if $u\neq v$, then $m_u\in b_u$ but $m_u\notin b_v$; hence
$b_u\neq b_v$. It is surjective by definition of $W$.
It remains to check preservation and reflection of the accessibility relation.

Assume $uR_0v$.

If $u=v$, then $uR_0u$ because $R_0$ is reflexive, and
\[
b_u\cap b_u=b_u\neq\varnothing,
\]
so $b_uRb_u$.

If $u\neq v$, then by construction the token $e_{\{u,v\}}$ belongs to both $b_u$ and $b_v$.
Hence
\[
b_u\cap b_v\neq\varnothing,
\]
so $b_uRb_v$.

Assume $b_uRb_v$, i.e.
\[
b_u\cap b_v\neq\varnothing.
\]

If $u=v$, then $uR_0u$ by reflexivity of $R_0$, so there is nothing to prove.

Suppose now that $u\neq v$. Let $x\in b_u\cap b_v$. We claim that $x$ cannot be a marker.
Indeed, the only marker belonging to $b_u$ is $m_u$, and the only marker belonging to $b_v$ is
$m_v$; since $u\neq v$, we have $m_u\neq m_v$, and moreover $m_u\notin b_v$, $m_v\notin b_u$.
Therefore $x$ must be an edge-token.

So
\[
x=e_{\{r,s\}}
\]
for some distinct $r,s\in W_0$. Since $x\in b_u$, the definition of $b_u$ implies
\[
u\in\{r,s\}.
\]
Likewise, since $x\in b_v$, we get
\[
v\in\{r,s\}.
\]
Because $u\neq v$, it follows that
\[
\{r,s\}=\{u,v\}.
\]
Hence
\[
x=e_{\{u,v\}},
\]
and therefore, by the way edge-tokens were introduced, we must have $uR_0v$.

Thus for all $u,v\in W_0$,
\[
uR_0v \iff b_uRb_v.
\]
So $f$ is a frame isomorphism.
\end{proof}

Now, let $\mathrm{Var}(\varphi)$ be the finite set of propositional variables occurring in $\varphi$.
For each $p\in \mathrm{Var}(\varphi)$ define
\[
a_p:=E\cup\{m_w:w\in V_0(p)\}\in B.
\]

Notice that every state $b_w$ contains two kinds of information:
its marker $m_w$, and its edge-tokens.
By putting \emph{all} edge-tokens into every $a_p$, we ensure that whether $b_w\leq a_p$ depends
only on whether the marker $m_w$ lies in $a_p$. Thus the truth of $p$ at $b_w$ will exactly
mirror whether $w\in V_0(p)$.

Extend this assignment arbitrarily to a valuation
\[
v:\Prop\to B.
\]
Consider the Kripke model
\[
M'=(W,R,v),
\]
where atomic satisfaction is given by
\[
M',b\models p \iff b\leq v(p).
\]

\begin{claim}
For every $p\in \mathrm{Var}(\varphi)$ and every $w\in W_0$,
\[
M,w\models p \iff M',b_w\models p.
\]
Equivalently,
\[
w\in V_0(p) \iff b_w\leq a_p.
\]
\end{claim}

\begin{proof}[Proof of the claim]
Fix $p\in \mathrm{Var}(\varphi)$ and $w\in W_0$.

Since $B=\Pow(X)$ is ordered by inclusion, the statement $b_w\leq a_p$ means precisely
\[
b_w\subseteq a_p.
\]
Now every edge-token belonging to $b_w$ lies in $E$, and by definition
\[
E\subseteq a_p.
\]
So all edge-tokens of $b_w$ are automatically contained in $a_p$.

The only remaining possible obstruction to $b_w\subseteq a_p$ is the marker $m_w$.
But $m_w\in a_p$ iff $w\in V_0(p)$, by the definition of $a_p$.

Therefore
\[
b_w\subseteq a_p \iff m_w\in a_p \iff w\in V_0(p),
\]
as required.
\end{proof}

We are now in the position to show that the original finite countermodel $M$ and the Boolean-algebraic model $M'$ agree
on all formulas built from the variables in $\mathrm{Var}(\varphi)$.

\begin{claim}
For every modal formula $\psi$ all of whose propositional variables lie in $\mathrm{Var}(\varphi)$, and for
every $w\in W_0$,
\[
M,w\models\psi \iff M',f(w)\models\psi.
\]
\end{claim}

\begin{proof}[Proof of the claim]
We argue by induction on the complexity of $\psi$.

\smallskip
\noindent\emph{Atomic case.}
If $\psi=p$, where $p\in \mathrm{Var}(\varphi)$, then the claim is exactly the previous claim:
\[
M,w\models p \iff w\in V_0(p) \iff b_w\leq a_p \iff M',b_w\models p.
\]

\smallskip
\noindent\emph{Boolean connectives.}
The cases of $\neg,\wedge,\vee,\to$ are immediate from the induction hypothesis, since both models
use the standard Kripke clauses for Boolean connectives.

\smallskip
\noindent\emph{Modal case $\Box\chi$.}
Assume the induction hypothesis holds for $\chi$.
Then:
\begin{align*}
M,w\models\Box\chi
&\iff \forall u\in W_0\,\bigl(wR_0u \Rightarrow M,u\models\chi\bigr)\\
&\iff \forall u\in W_0\,\bigl(f(w)Rf(u) \Rightarrow M',f(u)\models\chi\bigr)
\quad\text{(by the frame isomorphism and IH)}\\
&\iff M',f(w)\models\Box\chi.
\end{align*}

\smallskip
\noindent\emph{Modal case $\Diamond\chi$.}
Again using the frame isomorphism and the induction hypothesis:
\begin{align*}
M,w\models\Diamond\chi
&\iff \exists u\in W_0\,\bigl(wR_0u \ \&\ M,u\models\chi\bigr)\\
&\iff \exists u\in W_0\,\bigl(f(w)Rf(u) \ \&\ M',f(u)\models\chi\bigr)\\
&\iff M',f(w)\models\Diamond\chi.
\end{align*}

This completes the induction.
\end{proof}

Applying the last claim to $\psi=\varphi$ and $w=w_0$, we obtain
\[
M',f(w_0)\not\models\varphi.
\]

So far we have only constructed an abstract valuation $v:\Prop\to B$.
To finish the proof, we must show that this valuation comes from Boolean values of
$L_B$-sentences.
By Lemma~\ref{lem:truth-value-surjective}, for each $p\in \mathrm{Var}(\varphi)$ there exists an
$L_B$-sentence $\sigma_{v(p)}$ such that
\[
\llbracket \sigma_{v(p)}\rrbracket_B=v(p).
\]
Define a $B$-translation $T:\Prop\to\Sent(L_B)$ by
\[
T(p):=\sigma_{v(p)} \qquad (p\in \mathrm{Var}(\varphi)),
\]
and choose $T(p)$ arbitrarily for variables not in $\mathrm{Var}(\varphi)$.

Then for every $p\in Var(\varphi)$,
\[
v_T(p)=\llbracket T(p)\rrbracket_B=\llbracket \sigma_{v(p)}\rrbracket_B=v(p).
\]
Hence the models $M'$ and $M_{B,W,T}$ have the same frame and agree on the truth values of every
propositional variable occurring in $\varphi$.

\begin{claim}
For every subformula $\psi$ of $\varphi$ and every $b\in W$,
\[
M',b\models\psi \iff M_{B,W,T},b\models\psi.
\]
\end{claim}

\begin{proof}[Proof of the claim]
Again proceed by induction on the complexity of $\psi$.

If $\psi=p$ is atomic, then $p\in \mathrm{Var}(\varphi)$, so $v_T(p)=v(p)$. Therefore
\[
M',b\models p
\iff b\leq v(p)
\iff b\leq v_T(p)
\iff M_{B,W,T},b\models p.
\]

The Boolean cases are immediate from the induction hypothesis.

For $\Box\chi$ and $\Diamond\chi$, note that both models have exactly the same set of states $W$
and exactly the same accessibility relation $R$; only the valuation changed, and on variables
occurring in $\varphi$ the valuation agrees. Hence the induction goes through exactly as before.
\end{proof}

Applying this final claim to $\psi=\varphi$ and $b=f(w_0)$, we conclude that
\[
M_{B,W,T},f(w_0)\not\models\varphi.
\]
Therefore $\varphi$ is not valid in the class of models $M_{B,W,T}$.
This proves completeness.
\end{proof}

The preceding theorem explains exactly why the isolated state $0$ was excluded: once one restricts
to nonzero states, the frame-theoretic content of the semantics is precisely reflexive symmetry.

\begin{corollary}\label{cor:faithfulness-parameter-free}
Let $B$ be a complete Boolean algebra such that $V^{(B)}$ is faithful to $B$.
Then for every valuation $v:\Prop\to B$ there exists a translation
$T:\Prop\to\Sent_{\in}$ such that $v_T=v$.
Consequently, for this fixed algebra $B$, allowing parameters does not enlarge the class of Kripke
models $M_{B,W,T}$ obtained by ranging over nonempty $W\subseteq B^+$ and over translations.
\end{corollary}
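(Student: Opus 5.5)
The plan is to argue directly from the definition of faithfulness (Definition~\ref{def:faithful-loyal}). Faithfulness gives, for every $a\in B$, a parameter-free sentence $\sigma\in\Sent_{\in}$ with $\val{\sigma}=a$. Fix an arbitrary valuation $v:\Prop\to B$. For each propositional variable $p$, use faithfulness with $a:=v(p)$ to choose a sentence $\sigma_p\in\Sent_{\in}$ with $\val{\sigma_p}=v(p)$, and set $T(p):=\sigma_p$. This is a single choice over the countable set $\Prop$; fixing a well-order of $\Sent_{\in}$ and taking the least witness avoids any appeal to choice. Since $\Sent_{\in}\subseteq\Sent(L_B)$, this $T$ is in particular a $B$-translation in the sense of Definition~\ref{def:B-translation-nonzero}. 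By construction $v_T(p)=\val{T(p)}=v(p)$ for every $p$, so $v_T=v$.

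For the ``consequently'' clause, I would compare two classes of models. The first is the class of models $M_{B,W,T}$ with $T$ ranging over all $B$-translations, so parameters are allowed. The second is the subclass with $T$ restricted to parameter-free translations $T:\Prop\to\Sent_{\in}$.

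The model $M_{B,W,T}=(W,R,v_T)$ is determined by three ingredients: the set $W$, the co-consistency relation on $W$ (which depends only on $B$ and $W$), and the valuation $v_T$. The translation $T$ enters only through $v_T$.

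Now take any $W$ and any $B$-translation $T$, possibly using parameters. Apply the first part to the valuation $v:=v_T$. This yields a parameter-free $T'$ with $v_{T'}=v_T$, and hence $M_{B,W,T'}=M_{B,W,T}$ as Kripke models. So the parameterized class is contained in the parameter-free subclass, and the two classes coincide.

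There is no real obstacle here. The only point that needs care is to note that the semantics of $M_{B,W,T}$ depends on $T$ solely through $v_T$, so that equality of the induced valuations gives identical models, and in particular the same validities.

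It is also worth remarking why faithfulness is exactly what is needed. Lemma~\ref{lem:truth-value-surjective} supplies the surjectivity of $\val{\cdot}$ only with parameters, while faithfulness supplies it without them. By Theorem~\ref{thm:atomic-not-faithful}, the hypothesis cannot hold for atomic $B$ with more than two elements.
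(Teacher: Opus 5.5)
Your proposal is correct and follows essentially the same route as the paper: use faithfulness to pick, for each $p$, a parameter-free sentence in $\Sent_{\in}$ whose Boolean value is $v(p)$, set $T(p)$ to be that sentence, and conclude $v_T=v$. Your explicit argument for the ``consequently'' clause, that $M_{B,W,T}$ depends on $T$ only through $v_T$, together with your aside on avoiding choice, spells out what the paper simply calls immediate.
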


\begin{proof}
Faithfulness says precisely that for each $a\in B$ there is a parameter-free set-theoretic sentence
$\sigma_a\in\Sent_{\in}$ with $\llbracket \sigma_a\rrbracket_B=a$. Given $v:\Prop\to B$, choose for each
$p\in\Prop$ a sentence $\sigma_{v(p)}\in\Sent_{\in}$ with Boolean value $v(p)$ and define
$T(p):=\sigma_{v(p)}$. Then $v_T(p)=v(p)$ for every $p$.
The final assertion is immediate.
\end{proof}

By Theorem~\ref{thm:atomic-not-faithful}, complete \emph{atomic} Boolean algebras with more than two elements are
typically \emph{not} faithful for parameter-free sentences, so Corollary~\ref{cor:faithfulness-parameter-free} is a genuine
extra hypothesis in the parameter-free setting.

\medskip

There still remains an important observation to make here.  In Theorem \ref{thm:general-valid} we proved that the axioms (.2) is always a member of $T_H(M_B)$ for any complete Boolean algebra $B$. 
The next lemma shows that the the deeper reason for this fact is the symmetric character of the accessibility relation.

\begin{lemma}\label{lem:dot2-completeness-class}
For every complete Boolean algebra $B$, every nonempty set $W\subseteq B^+$, and every
$B$-translation $T$, the model $M_{B,W,T}$ validates
\[
\Diamond\Box\varphi \to \Box\Diamond\varphi.
\]
\end{lemma}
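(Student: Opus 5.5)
The plan is to derive $(.2)$ directly from the frame properties of $M_{B,W,T}$ established in the soundness part of Theorem~\ref{thm:KTB-completeness}, namely that the inherited co-consistency relation on $W\subseteq B^+$ is reflexive and symmetric. Note that the argument used for $(.2)$ in Theorem~\ref{thm:general-valid} cannot simply be repeated: there the witness $d=b\vee c$ was an arbitrary element of $B$, whereas now $W$ is an arbitrary nonempty subset of $B^+$ and need not be closed under joins. So $b\vee c$ may not be a state of the model, and I will avoid join-closure altogether.

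Fix $a\in W$ with $M_{B,W,T},a\models\Diamond\Box\varphi$, and pick $b\in W$ with $aRb$ and $M_{B,W,T},b\models\Box\varphi$. Let $c\in W$ be arbitrary with $aRc$; I must find $d\in W$ with $cRd$ and $M_{B,W,T},d\models\varphi$. By symmetry, $bRa$, so $\Box\varphi$ at $b$ gives $M_{B,W,T},a\models\varphi$; this is exactly the $(5^\ast)$ step from Theorem~\ref{thm:general-valid}, which uses only symmetry. By symmetry again, $cRa$, so $d:=a$ is a successor of $c$ satisfying $\varphi$, hence $M_{B,W,T},c\models\Diamond\varphi$. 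Since $c$ was arbitrary, $M_{B,W,T},a\models\Box\Diamond\varphi$.

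The main obstacle is simply noticing that the join-based witness from Theorem~\ref{thm:general-valid} is unavailable in a general $W$; once one uses the point $a$ itself as the common successor, only symmetry of $R$ is needed (reflexivity is not even used). Equivalently, one may observe that $\KTB\vdash\Diamond\Box\varphi\to\Box\Diamond\varphi$, because $B$ together with its dual consequence $\Diamond\Box\varphi\to\varphi$ chains with $B$ again to give $\varphi\to\Box\Diamond\varphi$, and then appeal to soundness in Theorem~\ref{thm:KTB-completeness}. I would present the direct semantic argument and mention this syntactic derivation as a remark, since it makes explicit that the validity of $(.2)$ stems from symmetry.
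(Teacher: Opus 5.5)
Your proof is correct and is essentially the paper's argument. Symmetry gives $bRa$ and hence $\varphi$ at $a$, and symmetry again makes $a$ itself the witness for $\Diamond\varphi$ at every successor $c$ of $a$; this avoids the join $b\vee c$, just as the paper remarks after the lemma. Your syntactic remark, deriving $(.2)$ from $(B)$ and its dual already in $\mathsf{KB}$ and hence in $\KTB$, is also correct.
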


\begin{proof}
Let $a\in W$ and assume
\[
M_{B,W,T},a\models \Diamond\Box\varphi.
\]
Then there exists $b\in W$ such that $aRb$ and
\[
M_{B,W,T},b\models \Box\varphi.
\]
Since the accessibility relation is given by
\[
xRy \quad\Longleftrightarrow\quad x\wedge y\neq 0,
\]
it is symmetric. Hence from $aRb$ we obtain $bRa$. Since $M_{B,W,T},b\models \Box\varphi$, it follows that
\[
M_{B,W,T},a\models \varphi.
\]

Now let $c\in W$ be arbitrary with $aRc$. Again by symmetry, $cRa$. Since
$M_{B,W,T},a\models \varphi$, the state $a$ witnesses
\[
M_{B,W,T},c\models \Diamond\varphi.
\]
As $c$ was an arbitrary successor of $a$, we conclude that
\[
M_{B,W,T},a\models \Box\Diamond\varphi.
\]
Therefore
\[
M_{B,W,T}\models \Diamond\Box\varphi \to \Box\Diamond\varphi.
\]
\end{proof}

Notice  that the argument of Lemma \ref{lem:dot2-completeness-class} is stronger than that of Theorem \ref{thm:general-valid}, since the latter used  a closure property with respect to joint ($d:=b\vee c$) that does not necessarily hold for any set $W \subsetneq B^+$.



\subsection{Full-state translation models and the special role of $0$}\label{subsec:full-state-characterization}

The completeness theorem above is formulated for nonempty sets of states $W\subseteq B^+$.
This restriction is essential. If one allows $0$ among the states, then the resulting class of
models is no longer captured by a normal modal logic.

Indeed, the Boolean value $0$ behaves in a completely rigid way:
it has no $R$-successors, and it satisfies every atomic proposition, since $0\leq a$ for all
$a\in B$. As a result, the full-state semantics validates formulas that depend on the \emph{special}
atomic behavior of $0$, and these are not stable under uniform substitution.

\begin{definition}\label{def:full-translation-model}
Let $B$ be a complete Boolean algebra and let $T:\Prop\to\Sent(L_B)$ be a $B$-translation.
If $W\subseteq B$ is nonempty, define the \emph{full-state translation model}
\[
\widehat M_{B,W,T}=(W,R,v_T)
\]
exactly as in Definition~\ref{def:B-translation-nonzero}, except that we now allow $0\in W$.
Thus
\[
aRb \quad\Longleftrightarrow\quad a\wedge b\neq 0 \qquad (a,b\in W),
\]
and
\[
\widehat M_{B,W,T},b\models p \quad\Longleftrightarrow\quad b\leq v_T(p).
\]
\end{definition}

\begin{proposition}\label{prop:full-state-not-normal}
The class of full-state translation models is not axiomatizable by any normal modal logic.
\end{proposition}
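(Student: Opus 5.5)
The plan is to exhibit a formula that is valid in every full-state translation model, yet has a substitution instance that fails in some full-state translation model. Since any normal modal logic is closed under uniform substitution, the set of formulas valid in the class cannot be a normal modal logic. Hence no normal modal logic axiomatizes the class.

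The candidate formula should exploit the two rigid features of $0$: it has no $R$-successors, and it satisfies every atomic proposition. Take
\[
\Box\bot \to p .
\]
To check validity, fix $B$, a nonempty $W\subseteq B$ and a translation $T$, and let $b\in W$ with $\widehat M_{B,W,T},b\models\Box\bot$. Then $b$ has no $R$-successor in $W$. If $b\neq 0$, then $b\wedge b=b\neq 0$, so $bRb$, a contradiction. Hence $b=0$. Since $0\le v_T(p)$, we get $\widehat M_{B,W,T},0\models p$. So the formula holds at every state of every full-state translation model.

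Now substitute $\neg p$ for $p$, obtaining $\Box\bot\to\neg p$. To refute it, take any complete $B$ (even $\Btwo$), put $W=\{0\}$ (or $\{0,1\}$), and let $T$ be arbitrary. At the state $0$, $\Box\bot$ holds vacuously, exactly as in Lemma~\ref{lem:state0}. But $0\le v_T(p)$ gives $\widehat M_{B,W,T},0\models p$, so $\neg p$ fails at $0$. Thus $\Box\bot\to\neg p$ is not valid in the class, while $\Box\bot\to p$ is.

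Finally, suppose some normal modal logic $L$ had exactly the formulas valid in all full-state translation models as its theorems. Then $\Box\bot\to p\in L$, so by uniform substitution $\Box\bot\to\neg p\in L$, which contradicts the previous paragraph. Equivalently, $L\vdash\Box\bot\to(p\wedge\neg p)$, i.e.\ $L\vdash\Box\bot\to\bot$, which fails at $0$.

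The only delicate step is the interpretive one. ``Axiomatizable by a normal modal logic'' must be read as ``the set of validities of the class is a normal modal logic'', so that closure under uniform substitution applies. I will state this reading explicitly. The computations themselves are immediate, using reflexivity on nonzero elements (Remark~\ref{rem:nonzero-bridge}) and the isolation of $0$.
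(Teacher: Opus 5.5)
Your proof is correct and follows essentially the same route as the paper. Your formula $\Box\bot\to p$ is the paper's $\neg\Diamond\top\to p$ in dual form; it is valid because nonzero states are reflexive and $0\le v_T(p)$, and its substitution instance with $\neg p$ fails at the isolated state $0$. Stating explicitly that you read the claim as ``the set of validities is a normal modal logic'' makes clear the interpretation the paper's argument also relies on.
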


\begin{proof}
Let $\top$ abbreviate any propositional tautology. We claim that the formula
\[
\neg\Diamond\top \to p
\]
is valid in every full-state translation model.

Let $\widehat M_{B,W,T}$ be such a model and let $w\in W$.
If $w=0$, then $0$ has no successors, so $\widehat M_{B,W,T},0\models \neg\Diamond\top$,
and also $\widehat M_{B,W,T},0\models p$ since $0\leq v_T(p)$.
Thus $\widehat M_{B,W,T},0\models \neg\Diamond\top\to p$.
If $w\neq 0$, then $wRw$ because $w\wedge w=w\neq 0$. Hence
$\widehat M_{B,W,T},w\models \Diamond\top$, so again
$\widehat M_{B,W,T},w\models \neg\Diamond\top\to p$.
Thus $\neg\Diamond\top\to p$ is valid in every full-state translation model.

Now substitute $\neg p$ uniformly for $p$. The result
\[
\neg\Diamond\top \to \neg p
\]
fails at the state $0$ in every full-state translation model, since
$0\models \neg\Diamond\top$ but $0\not\models \neg p$.
Therefore the validities of the full-state translation semantics are not closed under uniform
substitution, and hence cannot be the set of theorems of any normal modal logic.
\end{proof}

Although there is no completeness theorem with respect to a normal modal logic in the full-state
setting, the validities admit a simple characterization. The point is that every full-state model
splits into two independent parts: the nonzero states, governed by Theorem~\ref{thm:KTB-completeness},
and the isolated state $0$, whose theory is fixed once and for all.

\begin{definition}\label{def:M0}
Let $M_0=(\{0\},\varnothing,V_0)$ be the one-point Kripke model such that
\[
V_0(p)=\{0\}
\qquad\text{for every propositional variable }p.
\]
Thus in $M_0$ every atomic proposition is true, every formula of the form $\Box\varphi$ is true,
and every formula of the form $\Diamond\varphi$ is false.
\end{definition}

\begin{lemma}\label{lem:nonzero-part-same}
Let $B$ be a complete Boolean algebra, let $W\subseteq B$ be nonempty, let
\[
W^+:=W\cap B^+,
\]
and let $T$ be a $B$-translation. If $a\in W^+$, then for every modal formula $\varphi$,
\[
\widehat M_{B,W,T},a\models \varphi
\quad\Longleftrightarrow\quad
M_{B,W^+,T},a\models \varphi.
\]
\end{lemma}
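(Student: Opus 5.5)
The plan is to observe that $W^+$ is a generated subframe of $(W,R)$ and then run the standard induction on $\varphi$. The key point is that $0$ is $R$-isolated in the full-state model: for any $a\in W$ we have $aR0$ iff $a\wedge 0\neq 0$, which never holds, and likewise $0Ra$ never holds. Hence for $a\in W^+$ the set of $R$-successors of $a$ in $W$ equals its set of successors in $W^+$:
\[
\{c\in W: a\wedge c\neq 0\}=\{c\in W^+: a\wedge c\neq 0\}.
\]
Moreover, $W^+$ is closed under $R$-successors. The two models share the same valuation $v_T$ and the same atomic clause $b\le v_T(p)$, and the relation on $W^+$ is the restriction of the relation on $W$.

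I would then prove by induction on $\varphi$ the statement: for all $a\in W^+$, $\widehat M_{B,W,T},a\models\varphi$ iff $M_{B,W^+,T},a\models\varphi$. The atomic case is immediate because both models use the clause $a\le v_T(p)$. The Boolean cases follow directly from the induction hypothesis.

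For $\Box\chi$ at $a\in W^+$, truth in $\widehat M_{B,W,T}$ quantifies over the $c\in W$ with $aRc$. By the observation above, every such $c$ lies in $W^+$, so this is the same set of $c$ quantified over in $M_{B,W^+,T}$. Since each such $c$ is in $W^+$, the induction hypothesis applies to $\chi$ at $c$, and the two clauses coincide. The $\Diamond$ case is identical, or follows from Dual.

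No step is a real obstacle. The only point needing care is to formulate the induction hypothesis uniformly over all $a\in W^+$, rather than for a fixed $a$, so that it can be applied at the successors $c$. This is exactly where the fact that $0$ has no predecessors among nonzero states, and so is never reached from $W^+$, is used.
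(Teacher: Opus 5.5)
Your proposal is correct and follows essentially the same route as the paper: induction on $\varphi$, where the modal step rests on the observation that $a\wedge 0=0$, so the $R$-successors of any $a\in W^+$ are the same whether computed in $W$ or in $W^+$. Your remark that the induction hypothesis must be stated uniformly over all states of $W^+$ makes explicit a point the paper leaves implicit.
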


\begin{proof}
We argue by induction on the complexity of $\varphi$.
The atomic and Boolean cases are immediate.
For the modal clauses, note that if $a\in W^+$ then $a$ is not $R$-related to $0$, since $a\wedge 0=0$.
Hence the set of $R$-successors of $a$ in $W$ is exactly the same as its set of successors in $W^+$.
The induction step for $\Box$ and $\Diamond$ follows.
\end{proof}

\begin{lemma}\label{lem:zero-theory-fixed}
Let $B$ be a complete Boolean algebra, let $W\subseteq B$ be nonempty with $0\in W$, and let $T$
be a $B$-translation. Then for every modal formula $\varphi$,
\[
\widehat M_{B,W,T},0\models \varphi
\quad\Longleftrightarrow\quad
M_0,0\models \varphi.
\]
\end{lemma}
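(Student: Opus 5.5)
The plan is a structural induction on $\varphi$, proving the equivalence at the single state $0$ simultaneously for all formulas. The whole argument rests on two facts already recorded in Lemma~\ref{lem:state0} and Definition~\ref{def:full-translation-model}. First, in $\widehat M_{B,W,T}$ the state $0$ has no $R$-successors, since $0\wedge a=0$ for every $a\in W$. Second, $0$ satisfies every propositional variable, since $0\le v_T(p)$ for every $p$. The model $M_0$ of Definition~\ref{def:M0} was built to have exactly these two features: its accessibility relation is empty and $V_0(p)=\{0\}$ for all $p$.

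The steps are as follows.

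\emph{Atomic case.} For $\varphi=p$, both sides are true: on the left because $0\le v_T(p)$, and on the right because $0\in V_0(p)$.

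\emph{Boolean connectives.} Truth at $0$ of $\neg\psi$, $\psi\wedge\chi$, $\psi\vee\chi$ and $\psi\to\chi$ is determined by truth of the immediate subformulas at $0$. The induction hypothesis, applied at the same state $0$, therefore transfers directly.

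\emph{Modal cases.} No induction hypothesis is needed here. Since $0$ has no successors in either model:
\begin{itemize}
\item $\Box\psi$ holds vacuously at $0$ in both models;
\item $\Diamond\psi$ fails at $0$ in both models.
\end{itemize}
This is exactly Lemma~\ref{lem:state0}, transposed to the translation model with state set $W$.

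It is worth noting explicitly why the induction can stay at the single point $0$ rather than ranging over all states. Because $0$ is $R$-isolated, the modal clauses at $0$ never refer to truth at any other state. This is the mirror image of Lemma~\ref{lem:nonzero-part-same}, where isolation of $0$ meant that nonzero states never look at $0$.

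I expect no real obstacle; the only point requiring care is this observation, that the evaluation of any formula at $0$ depends only on $0$ itself. Once that is stated, the induction is routine, and the result does not depend on $B$, $W$ or $T$, as claimed.
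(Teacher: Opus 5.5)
Your proposal is correct and follows the paper's proof: an induction on $\varphi$ carried out at the single state $0$. The atomic case uses $0\le v_T(p)$ and $V_0(p)=\{0\}$, the Boolean cases go through by the induction hypothesis, and the modal cases are settled because $0$ has no successors in either model.
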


\begin{proof}
We argue by induction on the complexity of $\varphi$.
For atomic formulas, $\widehat M_{B,W,T},0\models p$ because $0\leq v_T(p)$, and
$M_0,0\models p$ by definition.
The Boolean cases are immediate.
For the modal clauses, $0$ has no successors in either model.
Therefore $\Box\psi$ holds vacuously at $0$ in both models, while $\Diamond\psi$ fails at $0$
in both models.
\end{proof}

\begin{theorem}\label{thm:full-state-characterization}
For every modal formula $\varphi$, the following are equivalent:
\begin{enumerate}[label=(\roman*)]
\item $\varphi$ is valid in every full-state translation model $\widehat M_{B,W,T}$;
\item $\mathsf{KTB}\vdash \varphi$ and $M_0\models \varphi$.
\end{enumerate}
\end{theorem}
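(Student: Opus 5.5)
The plan is to prove both implications by splitting a full-state model into its nonzero part and the isolated state $0$, using Lemma~\ref{lem:nonzero-part-same} and Lemma~\ref{lem:zero-theory-fixed} to reduce everything to Theorem~\ref{thm:KTB-completeness} and the one-point model $M_0$.

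For (ii)$\Rightarrow$(i), we fix a full-state translation model $\widehat M_{B,W,T}$ and a state $w\in W$, and split into two cases. If $w\neq 0$, then $W^+=W\cap B^+$ is nonempty (it contains $w$). Lemma~\ref{lem:nonzero-part-same} transfers the question to $M_{B,W^+,T},w$. There $\varphi$ holds by the soundness half of Theorem~\ref{thm:KTB-completeness}, since $\KTB\vdash\varphi$. If $w=0$, Lemma~\ref{lem:zero-theory-fixed} reduces truth at $0$ to $M_0,0\models\varphi$, which is part of the hypothesis.

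For (i)$\Rightarrow$(ii), we prove each conjunct separately.

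To obtain $M_0\models\varphi$, we exhibit one full-state model containing $0$. Take any complete $B$ (even $\Btwo$), $W=\{0\}$ and an arbitrary translation $T$. This is a legitimate full-state model because Definition~\ref{def:full-translation-model} allows $0\in W$. By (i), $\varphi$ holds at $0$ there, and Lemma~\ref{lem:zero-theory-fixed} then gives $M_0,0\models\varphi$.

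To obtain $\KTB\vdash\varphi$, we use the completeness half of Theorem~\ref{thm:KTB-completeness}, which requires showing that $\varphi$ is valid in every $M_{B,W,T}$ with $\emptyset\neq W\subseteq B^+$. Each such model is literally a full-state model $\widehat M_{B,W,T}$ that happens to omit $0$. The definitions coincide when $0\notin W$, or formally Lemma~\ref{lem:nonzero-part-same} applies with $W^+=W$. Hence (i) yields validity in every $M_{B,W,T}$, and the theorem gives $\KTB\vdash\varphi$.

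The only point requiring care is confirming that the nonzero-state models really form a subclass of the full-state models. This holds because Definition~\ref{def:full-translation-model} merely permits, and does not require, $0\in W$. Everything else is bookkeeping on the two lemmas, so I expect no substantial obstacle.
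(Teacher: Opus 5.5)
Your proposal is correct and follows the paper's proof essentially step for step. For (ii)$\Rightarrow$(i) you use the same case split via Lemma~\ref{lem:nonzero-part-same} and Lemma~\ref{lem:zero-theory-fixed}; for (i)$\Rightarrow$(ii) you use the same two observations, that nonzero-state models are full-state models and that $\widehat M_{B,\{0\},T}$ realizes $M_0$. The paper phrases the latter as an isomorphism with $\widehat M_{\Btwo,\{0\},T}$, while you invoke Lemma~\ref{lem:zero-theory-fixed}, which amounts to the same thing.
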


\begin{proof}
(i)$\Rightarrow$(ii).
Assume that $\varphi$ is valid in every full-state translation model.
First, every model $M_{B,W,T}$ with nonempty $W\subseteq B^+$ is in particular a full-state
translation model. Hence $\varphi$ is valid in every model of the class considered in
Theorem~\ref{thm:KTB-completeness}. By that theorem,
\[
\mathsf{KTB}\vdash \varphi.
\]

Second, $M_0$ is itself represented by the full-state semantics:
for example, it is isomorphic to $\widehat M_{\Btwo,\{0\},T}$ for any $\Btwo$-translation $T$.
Hence $M_0\models \varphi$.

(ii)$\Rightarrow$(i).
Assume that $\mathsf{KTB}\vdash\varphi$ and $M_0\models \varphi$.
Let $\widehat M_{B,W,T}$ be any full-state translation model.

If $a\in W^+$, then by Lemma~\ref{lem:nonzero-part-same},
\[
\widehat M_{B,W,T},a\models \varphi
\quad\Longleftrightarrow\quad
M_{B,W^+,T},a\models \varphi.
\]
Since $\mathsf{KTB}\vdash\varphi$, Theorem~\ref{thm:KTB-completeness} yields
\[
M_{B,W^+,T},a\models \varphi,
\]
and hence $\widehat M_{B,W,T},a\models \varphi$.
If $0\in W$, then by Lemma~\ref{lem:zero-theory-fixed},
\[
\widehat M_{B,W,T},0\models \varphi
\quad\Longleftrightarrow\quad
M_0,0\models \varphi.
\]
By assumption $M_0\models \varphi$, so $\widehat M_{B,W,T},0\models \varphi$.
Thus $\varphi$ holds at every state of every full-state translation model.
\end{proof}

Theorem~\ref{thm:full-state-characterization} shows that full-state validity decomposes into two
independent requirements: 1) the $\mathsf{KTB}$-validity governing the nonzero states, and 2) the fixed theory of the isolated state $0$, represented by $M_0$.
Equivalently, if $\mathrm{Val}_{\mathrm{full}}$ denotes the set of formulas valid in all full-state
translation models, then
\[
\mathrm{Val}_{\mathrm{full}}
=
\{\varphi:\ \mathsf{KTB}\vdash \varphi\ \text{ and }\ M_0\models \varphi\}.
\]
This set is not closed under uniform substitution by Proposition~\ref{prop:full-state-not-normal},
so it is not a normal modal logic.

\section{Logics associated with specific Boolean algebras}\label{sec:specific-boolean-logics}

The completeness theorem of Theorem~\ref{thm:KTB-completeness} identifies the \emph{global} logic obtained
by ranging over \emph{all} complete Boolean algebras, all nonempty sets of states $W\subseteq B^+$, and all
translations $T$. For a \emph{fixed} Boolean algebra $B$, one can refine this analysis by studying the
collection of modal principles that remain valid when only $W$ and $T$ vary.

\subsection{Two algebra-dependent logics}

Fix a complete Boolean algebra $B$.
Recall from Definition~\ref{def:B-translation-nonzero} that $M_{B,W,T}$ denotes the Kripke model on a nonempty
set of states $W\subseteq B^+$ with accessibility $aRb$ iff $a\wedge b\neq 0$ and with atomic satisfaction
$b\models p$ iff $b\le v_T(p)$, where $v_T(p)=\val{T(p)}$.

\begin{definition}\label{def:logic-of-B}
Let $B$ be a complete Boolean algebra.
\begin{enumerate}[label=(\roman*)]
\item The \emph{nonzero full-state logic} of $B$ is
\[
\Logfull(B)\ :=\ \{\varphi\in\Lbox:\ M_{B,B^+,T}\models \varphi\text{ for all $B$-translations $T$}\}.
\]
\item The \emph{nonzero-state submodel logic} of $B$ is
\[
\Logsub(B)\ :=\ \{\varphi\in\Lbox:\ M_{B,W,T}\models \varphi\text{ for all nonempty $W\subseteq B^+$ and all $B$-translations $T$}\}.
\]
\end{enumerate}
\end{definition}

\begin{remark}\label{rem:logic-of-B-basic}
Clearly $\Logsub(B)\subseteq \Logfull(B)$, since the latter corresponds
to the particular choice $W=B^+$. Moreover, by soundness in Theorem~\ref{thm:KTB-completeness} we always have
\[
\KTB \ \subseteq\ \Logsub(B)\ \subseteq\ \Logfull(B)
\qquad\text{for every complete Boolean algebra $B$.}
\]
Thus algebra-dependent logics are, in general, \emph{extensions} of $\KTB$.
\end{remark}

\subsection{The two-valued Boolean algebra B2}

The Boolean algebra $\Btwo=\{0,1\}$ is the unique Boolean algebra with no intermediate truth values.
Nevertheless, its associated modal logic is already strictly stronger than $\KTB$, since there is actually a collapse of the modality.

\begin{proposition}\label{prop:B2-collapse}
In $\Logsub(\Btwo)$ we have the modal formula
\[
\Box p\ \leftrightarrow\ p.
\]
Consequently, $\Logsub(\Btwo)$ is a proper extension of $\KTB$.
\end{proposition}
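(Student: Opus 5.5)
The argument splits into two parts: validity of $\Box p\leftrightarrow p$, and then properness of the extension.

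\emph{Validity.} Here $B=\Btwo$, so $\Btwo^+=\{1\}$. The only nonempty $W\subseteq\Btwo^+$ is $W=\{1\}$, which means every model $M_{\Btwo,W,T}$ is a one-point frame. Since $1\wedge 1=1\neq 0$, the single point is reflexive. In any one-point reflexive model, the only $R$-successor of $1$ is $1$ itself, so for every formula $\psi$
\[
M_{\Btwo,\{1\},T},1\models\Box\psi \iff M_{\Btwo,\{1\},T},1\models\psi .
\]
Taking $\psi=p$ gives $\Box p\leftrightarrow p$ at the unique state. As $T$ was arbitrary, $\Box p\leftrightarrow p\in\Logsub(\Btwo)$. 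Nothing about translations or Lemma~\ref{lem:truth-value-surjective} is needed beyond this.

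\emph{Properness.} Remark~\ref{rem:logic-of-B-basic} already gives $\KTB\subseteq\Logsub(\Btwo)$. It therefore suffices to show $\KTB\nvdash p\to\Box p$; it is enough to refute only this half of the biconditional, since $\Box p\to p$ is an instance of $(T)$. By the soundness half of Theorem~\ref{thm:KTB-completeness}, or just soundness of $\KTB$ for reflexive symmetric frames, one countermodel suffices. I will take two worlds $u,v$ with $R$ the total relation on $\{u,v\}$, which is reflexive and symmetric, and valuation $V(p)=\{u\}$. Then $u\models p$ but $u\not\models\Box p$ because $uRv$ and $v\not\models p$.

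To stay within the paper's own semantics, I could instead realize this countermodel as some $M_{B,W,T}$ with $B=\Pow(\{x,y\})$. I would take $W=\{\{x\},\{x,y\}\}$ and use Lemma~\ref{lem:truth-value-surjective} to choose $T(p)$ with value $\{x\}$.

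I expect no real obstacle. The only care point is that properness requires refuting $p\to\Box p$ rather than the full biconditional, and the $(T)$ direction handles the rest.
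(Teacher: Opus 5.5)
Your proposal is correct and follows the paper's proof. Validity comes from $W=\{1\}$ being the only admissible state set, which yields the one-point reflexive model. Properness comes from a reflexive symmetric countermodel to $p\to\Box p$. The paper takes its countermodel from Proposition~\ref{prop:B4-refute-collapse}(i) in $\Bfour$ with $W=\{a,b,1\}$; your realization in $\Pow(\{x,y\})$ with $W=\{\{x\},\{x,y\}\}$ is that same example with the state $b$ dropped, which avoids the forward reference.
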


\begin{proof}
Since $\Btwo^+=\{1\}$, every admissible nonempty state set is necessarily $W=\{1\}$.
Thus every model $M_{\Btwo,W,T}$ is the one-point reflexive Kripke model. In such a model,
\[
M_{\Btwo,W,T},1\models \Box p \iff M_{\Btwo,W,T},1\models p.
\]
Hence $\Box p\leftrightarrow p$ is valid in every $M_{\Btwo,W,T}$, and therefore
\[
\Box p\leftrightarrow p\in \Logsub(\Btwo).
\]

To see that this is a proper extension of $\KTB$, observe that $\Box p\leftrightarrow p$ fails already
in the four-element Boolean algebra $\Bfour$ under a suitable translation (Proposition~\ref{prop:B4-refute-collapse}).
By Theorem~\ref{thm:KTB-completeness}, any $\KTB$-theorem is valid in all translation models, hence
$\KTB$ cannot prove $\Box p\leftrightarrow p$.
\end{proof}

Proposition~\ref{prop:B2-collapse} is a concrete instance of a general phenomenon:
$\Btwo$ is so small that the modal operators cannot create genuinely new behavior on \emph{atomic} propositions.
By contrast, once $B$ has a nontrivial element $a\notin\{0,1\}$, we can separate truth at different Boolean
states and invalidate such collapse principles (see below).

\subsection{The four-element algebra B4 and the role of parameters}

Let $\Bfour=\{0,a,b,1\}$ be the Boolean algebra with two atoms $a,b$ and $a^\ast=b$.
Section~\ref{sec:results} showed that, in the \emph{parameter-free} setting, complete atomic Boolean algebras
satisfy several additional principles (Theorem~\ref{thm:atomic-validities}). Those extra validities arise
because, by Theorem~\ref{thm:atomic-not-faithful}, parameter-free sentences cannot realize the intermediate
truth values $a$ and $b$.

Once we allow \emph{parameter} translations $T:\Prop\to\Sent(\LB)$, however, the intermediate values become
available (Lemma~\ref{lem:truth-value-surjective}) and the additional atomic-Boolean principles need no longer hold.

\begin{proposition}\label{prop:B4-refute-collapse}
Let $B=\Bfour$ and work with the translation semantics of Section~\ref{sec:completeness}, taking
\[
W=B^+=\{a,b,1\}.
\]
Then:
\begin{enumerate}[label=(\roman*)]
\item $\Box p\leftrightarrow p$ is \emph{not} valid in $M_{\Bfour,W,T}$ for all translations $T$.
\item The modal axiom $(4)$, $\Box\varphi\to\Box\Box\varphi$, is \emph{not} valid in $M_{\Bfour,W,T}$ for all
translations $T$.
\item The modal axiom $(5)$, $\Diamond\varphi\to\Box\Diamond\varphi$, is \emph{not} valid in $M_{\Bfour,W,T}$ for all
translations $T$.
\end{enumerate}
\end{proposition}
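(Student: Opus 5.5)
The plan is to give explicit countermodels on the frame $W=\{a,b,1\}$ and realize the required valuations by translations. First I will record the frame. Since $a\wedge b=0$, while $a\wedge 1=a$, $b\wedge 1=b$, and every nonzero element meets itself, we get $aRa$, $bRb$, $1R1$, $aR1$, $1Ra$, $bR1$, $1Rb$, and neither $aRb$ nor $bRa$. So the frame is the reflexive path $a - 1 - b$, which is not transitive. For any target valuation $v:\Prop\to\Bfour$, Lemma~\ref{lem:truth-value-surjective} supplies $L_B$-sentences $\sigma_{v(p)}$ with $\val{\sigma_{v(p)}}=v(p)$. Setting $T(p):=\sigma_{v(p)}$ then gives $v_T=v$. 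It therefore suffices to choose valuations in $\Bfour$, and atomic truth at a state $c$ is just $c\le v(p)$.

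For (i) and (ii) I take $v(p)=a$. Then $p$ holds exactly at $a$, since $a\le a$, $1\not\le a$ and $b\not\le a$. For (i), $p$ holds at $a$, but $aR1$ and $p$ fails at $1$, so $\Box p$ fails at $a$; hence $p\to\Box p$ fails at $a$. For (ii) a different valuation is needed, because $\Box p$ must hold somewhere. I take $v(p)=1$ at first thought, but that makes everything trivial. So I instead use $v(p)=a\vee 1$? That equals $1$. The better choice is a formula built from two variables, or a valuation on which $p$ holds at $\{a,1\}$. Since the truth set of $p$ is always a downset intersected with $W$, $\{a,1\}$ is not available as a truth set for a single atom. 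I will therefore instantiate $\varphi$ as $p\vee q$ with $v(p)=a$ and $v(q)=1$. That also fails, since $q$ then holds everywhere. Instead I choose $v(q)=1$ restricted appropriately.

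Simplest is to take $\varphi=\neg r$ with $v(r)=b$. Then $r$ holds only at $b$, so $\varphi$ holds exactly at $\{a,1\}$. At $a$ the successors are $a$ and $1$, so $\Box\varphi$ holds at $a$. But $1$ sees $b$, where $\varphi$ fails, so $\Box\varphi$ fails at $1$. Since $aR1$, $\Box\Box\varphi$ fails at $a$, which refutes (4).

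For (iii) I take $v(p)=a$ again, so $p$ holds only at $a$. At $1$ we have $\Diamond p$, witnessed by $a$. But $1Rb$, and $b$ sees only $b$ and $1$, neither of which satisfies $p$, so $\Diamond p$ fails at $b$. Hence $\Box\Diamond p$ fails at $1$, refuting (5) at the state $1$. The only mild obstacle is (ii), where a single atom's truth set is a downset. This is handled by using a negated atom as the instance of $\varphi$, which is legitimate because (4) is a schema. If one insists on an atomic instance, (4) can fail only where a variable's truth set is non-downward-closed in a suitable way, so the schematic reading is needed.
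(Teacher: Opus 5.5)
Your proposal is correct and follows the paper's proof. You use the same reflexive path frame $a - 1 - b$, realize valuations through Lemma~\ref{lem:truth-value-surjective}, and give identical witnesses for (i) and (iii); for (ii) the paper takes $\varphi:=\Diamond q$ with $v_T(q)=a$, which has the same truth set $\{a,1\}$ as your $\neg r$ with $v_T(r)=b$, so the failure of (4) comes from $aR1$, $1Rb$ but not $aRb$ in exactly the same way (in a final write-up, delete the abandoned $v(p)=1$, $a\vee 1$ and $p\vee q$ detours).
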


\begin{proof}
Let $T$ be a translation such that $v_T(q)=a$ for some propositional variable $q$ (possible by
Lemma~\ref{lem:truth-value-surjective}).
Then $q$ holds exactly at the admissible state $a$.

\smallskip\noindent\emph{(i)} Consider $p:=q$ and the state $a$.
We have $M_{\Bfour,W,T},a\models q$ since $a\le a$.
But $M_{\Bfour,W,T},a\not\models \Box q$ because $aR1$ and $M_{\Bfour,W,T},1\not\models q$
(since $1\not\le a$).
Thus $\Box p\leftrightarrow p$ fails at $a$.

\smallskip\noindent\emph{(ii)} Let $\varphi:=\Diamond q$ and consider the state $a$.
The $R$-successors of $a$ are exactly $a$ and $1$.
Since $q$ holds at $a$, we have $M_{\Bfour,W,T},a\models \Diamond q$ (witness $a$).
Also $M_{\Bfour,W,T},1\models \Diamond q$ (witness $a$, since $1Ra$).
Hence $M_{\Bfour,W,T},a\models \Box\Diamond q$.

However, $\Box\Box\Diamond q$ fails at $a$ because $1$ is an $R$-successor of $a$ and
$M_{\Bfour,W,T},1\not\models \Box\Diamond q$:
indeed, $b$ is an $R$-successor of $1$ and $M_{\Bfour,W,T},b\not\models \Diamond q$
(the only successors of $b$ are $b$ and $1$, and $q$ fails at both).
Thus $M_{\Bfour,W,T},a\models \Box\varphi$ but $M_{\Bfour,W,T},a\not\models \Box\Box\varphi$.

\smallskip\noindent\emph{(iii)} Let $\varphi:=q$ and consider the state $1$.
We have $M_{\Bfour,W,T},1\models \Diamond q$ (witness $a$).
But $M_{\Bfour,W,T},1\not\models \Box\Diamond q$ because $b$ is an $R$-successor of $1$ and
$M_{\Bfour,W,T},b\not\models \Diamond q$ as above.
Hence $(5)$ fails at $1$.
\end{proof}

Proposition~\ref{prop:B4-refute-collapse} explains why the additional atomic-Boolean principles from
Theorem~\ref{thm:atomic-validities} are inherently tied to the \emph{parameter-free} restriction:
when intermediate Boolean values cannot be realized as truth values of atomic propositions, formulas
become uniform on $B\setminus\{0,1\}$ and extra modal principles emerge. Allowing parameters restores
full truth-value surjectivity and eliminates this artifact.

\subsection{A canonical B4 submodel in every nontrivial Boolean algebra}\label{subsec:canonical-B4}

The algebra $\Bfour$ is not just a convenient toy example: it appears canonically inside \emph{every}
Boolean algebra with an intermediate element.

\begin{lemma}\label{lem:B4-inside-B}
Let $B$ be a Boolean algebra and let $a\in B$ satisfy $0<a<1$. Put $b:=a^\ast$ and
\[
W:=\{0,a,b,1\}\subseteq B.
\]
Then $W$ is a subalgebra of $B$ (indeed it is isomorphic to $\Bfour$), and the induced accessibility
relation on $W$ is exactly the compatibility relation of $\Bfour$:
\[
xRy \text{ in }W \quad\Longleftrightarrow\quad x\wedge y\neq 0.
\]
\end{lemma}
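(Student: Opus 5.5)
The plan is to verify directly that $W=\{0,a,b,1\}$ with $b=a^\ast$ is closed under the Boolean operations. Then I will exhibit the isomorphism with $\Bfour$, and finally observe that the accessibility statement is immediate because $R$ is defined by meets computed in $B$.

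\emph{Step 1: the four elements are distinct.} Since $0<a<1$, we have $a\neq 0,1$. We also have $b=a^\ast\neq 0$, since otherwise $a=a\vee 0=a\vee a^\ast=1$. Likewise $b\neq 1$, since otherwise $a=a\wedge 1=a\wedge a^\ast=0$. Finally $b\neq a$, since otherwise $a=a\wedge a=a\wedge a^\ast=0$. Hence $W$ has exactly four elements.

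\emph{Step 2: closure of $W$.} Complements: $0^\ast=1$, $1^\ast=0$, $a^\ast=b$, and $b^\ast=a^{\ast\ast}=a$ by uniqueness of complements in a distributive lattice. Meets: $a\wedge b=0$; every meet with $0$ is $0$; every meet with $1$ is the other element; and $x\wedge x=x$. Joins are handled dually, using $a\vee b=1$. Thus $W$ contains $0,1$ and is closed under $\wedge,\vee,{}^\ast$, so it is a subalgebra.

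\emph{Step 3: the isomorphism.} Map the atoms of $\Bfour$ to $a$ and $b$, and map $0,1$ to $0,1$. The operation tables computed in Step 2 coincide with those of $\Bfour$, where the two atoms are complementary with meet $0$ and join $1$. Hence this bijection is a Boolean isomorphism.

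\emph{Step 4: the accessibility relation.} The relation induced on $W$ is the restriction of Definition~\ref{def:R}, namely $xRy$ iff $x\wedge y\neq 0$ with the meet computed in $B$. Because $W$ is a subalgebra, this meet coincides with the meet computed in $W$. Transporting along the isomorphism of Step 3 therefore gives exactly the compatibility relation of $\Bfour$. Concretely, $0$ is isolated, $1$ is related to $a$, $b$ and itself, each of $a,b$ is reflexive, and $a,b$ are not related to each other.

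There is no real obstacle in this argument. The only point needing care is the distinctness check in Step 1, which uses the strictness of $0<a<1$; without it $W$ would collapse to $\Btwo$.
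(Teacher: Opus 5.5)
Your proposal is correct and takes the same route as the paper: you check that $W$ is closed under complement, meet and join (using $a\wedge a^\ast=0$ and $a\vee a^\ast=1$), conclude $W\cong\Bfour$, and observe that the induced relation is $x\wedge y\neq 0$ with the meet computed in $B$, which agrees with the meet in $W$. Your Step~1 distinctness check is a welcome addition, since the paper leaves it implicit when it calls $W$ a four-element subalgebra.
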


\begin{proof}
Since $b=a^\ast$, the set $W$ is closed under complements.
It is also closed under finite meets and joins: for instance $a\wedge b=0$ and $a\vee b=1$, and meets/joins
with $0$ or $1$ stay in $W$.
Thus $W$ is a four-element Boolean subalgebra, hence isomorphic to $\Bfour$.
The statement about $R$ is immediate from the definition of the induced model $M_{B,W,T}$.
\end{proof}

\begin{corollary}\label{ContainB4}
Let $B$ be a complete Boolean algebra.
If $B\not\cong \Btwo$ (equivalently: $B$ has some $a$ with $0<a<1$), then none of the following formulas lie in
$\Logsub(B)$:
\[
\Box p\leftrightarrow p,
\qquad
(4)\ \Box\varphi\to\Box\Box\varphi,
\qquad
(5)\ \Diamond\varphi\to\Box\Diamond\varphi.
\]
\end{corollary}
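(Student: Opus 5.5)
The plan is to transfer the countermodels of Proposition~\ref{prop:B4-refute-collapse} into $B$ through the canonical copy of $\Bfour$ from Lemma~\ref{lem:B4-inside-B}. Fix $a\in B$ with $0<a<1$ and put $b:=a^\ast$. The admissible state set is
\[
W:=\{a,b,1\}\subseteq B^+ .
\]
By Lemma~\ref{lem:B4-inside-B}, $\{0,a,b,1\}$ is a subalgebra isomorphic to $\Bfour$. Meets are computed in $B$ and agree with the meets in that subalgebra. Hence the co-consistency relation on $W$ in $B$ coincides with the relation on $\Bfour^+$ under the obvious isomorphism: $a$ and $b$ are incompatible, while $1$ is related to everything, each state included.

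Next comes the valuation. By Lemma~\ref{lem:truth-value-surjective} there is an $L_B$-sentence with Boolean value $a$ in $B$. Choose a $B$-translation $T$ with $v_T(q)=a$, and for the corollary's $p$ take $q$ itself. Then check atomic truth on $W$ directly. The formula $q$ holds at $a$ because $a\le a$. It fails at $1$ since $1\not\le a$. It fails at $b$ since $b\le a$ would give $b=a\wedge b=0$, which is false. So $q$ holds exactly at the state corresponding to $a$ in $\Bfour$.

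The model $M_{B,W,T}$ is therefore isomorphic, as a Kripke model on $q$, to $M_{\Bfour,\Bfour^+,T'}$ with $v_{T'}(q)=a$. The induction on formulas used in the claims within the proof of Theorem~\ref{thm:KTB-completeness} shows that isomorphic Kripke models satisfy the same formulas at corresponding states. Consequently the three failures from Proposition~\ref{prop:B4-refute-collapse} carry over:
\begin{itemize}
\item $\Box q\leftrightarrow q$ fails at $a$;
\item $(4)$ with $\varphi:=\Diamond q$ fails at $a$;
\item $(5)$ with $\varphi:=q$ fails at $1$.
\end{itemize}
Since $W$ is a nonempty subset of $B^+$ and $T$ is a $B$-translation, none of these formulas belongs to $\Logsub(B)$.

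The main obstacle is only bookkeeping. The state $0$ of the subalgebra must be excluded from $W$, since $W$ has to lie in $B^+$. Also, atomic satisfaction $x\le v_T(q)$ is computed in $B$ rather than in $\Bfour$, so one must confirm that it agrees on $W$; the check above does exactly this.

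For the side equivalence $B\not\cong\Btwo$ iff $B$ has some $a$ with $0<a<1$, argue as follows. If there is no such element, then $B=\{0,1\}$ and $B\cong\Btwo$. Conversely, $\Btwo$ itself has no intermediate element.
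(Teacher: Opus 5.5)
Your proposal is correct and follows the paper's proof: the same state set $\{a,a^\ast,1\}\subseteq B^+$, the same translation with $v_T(q)=a$ from Lemma~\ref{lem:truth-value-surjective}, and the same witnessing states for the three failures. The only difference is presentational. You transport the computations of Proposition~\ref{prop:B4-refute-collapse} along an explicit isomorphism with $M_{\Bfour,\Bfour^+,T'}$, and you check explicitly that $q$ fails at $a^\ast$, whereas the paper re-verifies the failures directly in $B$.
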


\begin{proof}
Choose $a\in B$ with $0<a<1$, and let
\[
W^+=\{a,a^\ast,1\}\subseteq B^+.
\]
By Lemma~\ref{lem:B4-inside-B} the induced accessibility relation on $W^+$ is exactly the nonzero
part of the compatibility relation of $\Bfour$. By Lemma~\ref{lem:truth-value-surjective}, fix a translation
$T$ with $v_T(q)=a$ for some propositional variable $q$.

\smallskip\noindent\emph{Failure of $\Box p\leftrightarrow p$.}
At the state $a$ we have $M_{B,W^+,T},a\models q$ (since $a\le a$), but $M_{B,W^+,T},a\not\models \Box q$
because $aR1$ in $W^+$ and $M_{B,W^+,T},1\not\models q$ (since $1\not\le a$).

\smallskip\noindent\emph{Failure of (4).}
Let $\varphi:=\Diamond q$ and consider $a$.
As in Proposition~\ref{prop:B4-refute-collapse}(ii), one checks that $M_{B,W^+,T},a\models \Box\varphi$
but $M_{B,W^+,T},a\not\models \Box\Box\varphi$, witnessed by the successor $1$ and then $a^\ast$.

\smallskip\noindent\emph{Failure of (5).}
Let $\varphi:=q$ and consider $1$.
We have $M_{B,W^+,T},1\models \Diamond q$ (witness $a$), but $M_{B,W^+,T},1\not\models \Box\Diamond q$ since
$1Ra^\ast$ and $M_{B,W^+,T},a^\ast\not\models \Diamond q$.

Thus all three displayed formulas fail in some translation submodel $M_{B,W^+,T}$, hence are not in
$\Logsub(B)$.
\end{proof}

\subsection{Infinite power set algebras already realize the global logic}

The global completeness theorem ranges over \emph{all} complete Boolean algebras.
In fact, a single sufficiently large complete Boolean algebra already suffices to realize the global logic.

\begin{theorem}\label{thm:PowX-KTB}
Let $X$ be an infinite set and let $B=\Pow(X)$.
Then
\[
\Logsub(B)\ =\ \KTB.
\]
\end{theorem}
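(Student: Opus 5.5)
The inclusion $\KTB\subseteq\Logsub(\Pow(X))$ is already recorded in Remark~\ref{rem:logic-of-B-basic}, via the soundness half of Theorem~\ref{thm:KTB-completeness}. So the substance is the reverse inclusion. Suppose $\KTB\nvdash\varphi$; we must find a nonempty $W\subseteq \Pow(X)^+$ and a $\Pow(X)$-translation $T$ such that $M_{\Pow(X),W,T}\not\models\varphi$. The plan is to reuse the completeness construction in the proof of Theorem~\ref{thm:KTB-completeness}, but to carry it out inside the fixed algebra $\Pow(X)$ rather than inside a freshly built $\Pow(E\cup M_1)$.

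\textbf{Step 1.} By the finite model property of $\KTB$, fix a finite reflexive symmetric model $M=(W_0,R_0,V_0)$ and a world $w_0$ with $M,w_0\not\models\varphi$. Form the finite set $X_0:=E\cup M_1$ of edge-tokens and markers exactly as in that proof. Since $X_0$ is finite and $X$ is infinite, choose an injection $\iota:X_0\to X$. Its image $\iota[X_0]$ is a finite subset of $X$, and the complement $X\setminus\iota[X_0]$ is never used.

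\textbf{Step 2.} Transport the construction along $\iota$. Set $b'_w:=\iota[b_w]$ and $a'_p:=\iota[a_p]$ for $p\in\mathrm{Var}(\varphi)$. Since $\iota$ is injective, the direct-image map $\Pow(X_0)\to\Pow(X)$ preserves and reflects both nonempty intersection and inclusion:
\[
S\cap S'\neq\varnothing \iff \iota[S]\cap\iota[S']\neq\varnothing,
\qquad
S\subseteq S'\iff \iota[S]\subseteq\iota[S'].
\]
It follows that, on $W:=\{b'_w:w\in W_0\}\subseteq\Pow(X)^+$, the inherited co-consistency relation is isomorphic to $R_0$ via $w\mapsto b'_w$. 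Moreover $b'_w\le a'_p$ iff $w\in V_0(p)$. Both facts follow directly from the two corresponding claims in the proof of Theorem~\ref{thm:KTB-completeness}. Note that these are \emph{not} Boolean homomorphisms, because complements are not preserved; but only intersections and inclusions are used, so this does not matter.

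\textbf{Step 3.} Apply Lemma~\ref{lem:truth-value-surjective} in $B=\Pow(X)$ to obtain sentences $\sigma_{a'_p}$ with $\llbracket\sigma_{a'_p}\rrbracket_B=a'_p$. Define $T(p):=\sigma_{a'_p}$ for $p\in\mathrm{Var}(\varphi)$, and set $T(p)$ arbitrarily for the remaining variables. The same induction on subformulas as in the earlier claims then gives $M_{\Pow(X),W,T},b'_{w_0}\not\models\varphi$, hence $\varphi\notin\Logsub(\Pow(X))$.

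The main point requiring care is Step 2, namely checking that the embedding of the finite construction into the fixed algebra preserves exactly the two relations the semantics depends on: $\wedge\neq0$ for accessibility and $\le$ for atomic satisfaction. Since we are free to choose $W$ to be only the images $b'_w$, extraneous elements of $\Pow(X)$ never enter the model. The rest is bookkeeping copied from Theorem~\ref{thm:KTB-completeness}.

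The same argument in fact works for any complete $B$ containing arbitrarily large finite families of pairwise disjoint nonzero elements, for instance any infinite $B$. One maps each token to a distinct member of such an antichain and takes joins. Infiniteness is used only to accommodate countermodels of unbounded finite size.
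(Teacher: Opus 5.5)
Your proposal is correct and follows the paper's own proof essentially step for step. Both use the finite model property, the edge-token/marker encoding, an injection $\iota:X_0\hookrightarrow X$ to transport it into $\Pow(X)$, and Lemma~\ref{lem:truth-value-surjective} to realize the valuation by a translation; your explicit check that direct images under an injection preserve and reflect nonempty intersection and inclusion is exactly the verification the paper leaves implicit.

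Your closing remark is also sound: send tokens to a finite family of pairwise disjoint nonzero elements and take joins. It yields Theorem~\ref{thm:infinite-algebras-KTB} directly, bypassing the paper's route through the complete subalgebra $\Pow(\omega)$ (Lemma~\ref{lem:contains-Pow-omega} together with Proposition~\ref{prop:complete-subalgebra-monotone}).
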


\begin{proof}
By Remark~\ref{rem:logic-of-B-basic}, $\KTB\subseteq \Logsub(B)$.
For the reverse inclusion, suppose $\KTB\nvdash \varphi$.
By the finite model property for $\KTB$, choose a finite reflexive symmetric Kripke model $M=(W_0,R_0,V_0)$ and
$w_0\in W_0$ with $M,w_0\not\models \varphi$.
As in the completeness proof of Theorem~\ref{thm:KTB-completeness}, form the finite set of tokens
$X_0:=E\cup M_1$, where
\[
E=\{e_{\{u,v\}}:u,v\in W_0,\ u\neq v,\ uR_0v\}
\qquad\text{and}\qquad
M_1=\{m_w:w\in W_0\},
\]
and define nonempty sets $b_w\subseteq X_0$ encoding the accessibility pattern by nonempty intersection.

Since $X$ is infinite and $X_0$ is finite, fix an injection $\iota:X_0\hookrightarrow X$.
Transport the construction along $\iota$, viewing each $b_w$ as a subset of $X$ via $\iota$.
Let $W:=\{b_w:w\in W_0\}\subseteq \Pow(X)^+$.

Define Boolean values $a_p\in \Pow(X)$ for the propositional variables occurring in $\varphi$ exactly as in
Theorem~\ref{thm:KTB-completeness} (using the markers $m_w$), and realize the resulting assignment
$v:\Prop\to \Pow(X)$ by a translation $T$ using Lemma~\ref{lem:truth-value-surjective}.
Then the resulting model $M_{B,W,T}$ is isomorphic to $M$ and therefore
falsifies $\varphi$ at the state corresponding to $w_0$.
Hence $\varphi\notin \Logsub(B)$, proving $\Logsub(B)\subseteq \KTB$.
\end{proof}

Theorem~\ref{thm:PowX-KTB} shows that the global completeness result is already witnessed by a single algebra,
e.g.\ $B=\Pow(\omega)$. By contrast, finite power set algebras $\Pow(n)$ yield stronger logics simply because
they do not have enough nonzero states to realize arbitrary finite reflexive symmetric countermodels.

\subsection{A coarse classification of \texorpdfstring{$\Logsub(B)$}{Log(B)}}\label{subsec:classification}

Once translations into $\LB$ are allowed, the submodel logic $\Logsub(B)$
exhibits a striking rigidity: every \emph{infinite} complete Boolean algebra already yields the global logic $\KTB$.
The only deviations from $\KTB$ arise from \emph{finite} algebras.

\begin{lemma}\label{lem:contains-Pow-omega}
Let $B$ be an infinite complete Boolean algebra. Then there exists a countable family
$\langle d_n:n\in\omega\rangle\subseteq B^+$ of pairwise disjoint elements.
Consequently, $B$ has a complete Boolean subalgebra $C$ such that
\[
C\cong \Pow(\omega).
\]
\end{lemma}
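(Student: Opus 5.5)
The proof splits into two parts: first produce an infinite antichain of pairwise disjoint nonzero elements, then use completeness of $B$ to turn it into a copy of $\Pow(\omega)$.

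\emph{Step 1: the antichain.} I would build $\langle d_n:n\in\omega\rangle$ recursively, maintaining an element $c_n\in B^+$ such that the relative algebra $B\restriction c_n=\{x\in B:x\le c_n\}$ is infinite. Start with $c_0=1$. Given $c_n$, the relative algebra is a Boolean algebra with top $c_n$, and it has more than two elements, so there is $e$ with $0<e<c_n$. The identity $x=(x\wedge e)\vee(x\wedge e')$, where $e'=c_n\wedge e^\ast$, shows that $x\mapsto(x\wedge e,\,x\wedge e')$ is an injection of $B\restriction c_n$ into $(B\restriction e)\times(B\restriction e')$. Hence at least one of $e,e'$ has an infinite relative algebra. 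Let $c_{n+1}$ be that one and $d_n$ the other. Both are nonzero, and by construction $d_n\le c_n$ while $d_n\wedge c_{n+1}=0$. Since $d_m\le c_{m}\le c_{n+1}$ for all $m>n$, every later $d_m$ is disjoint from $d_n$. So the $d_n$ are pairwise disjoint and nonzero. This bookkeeping is the only genuinely combinatorial point, and it is routine.

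\emph{Step 2: the subalgebra.} Replace $d_0$ by $d_0\vee\bigl(\bigvee_n d_n\bigr)^\ast$, so that the family becomes a partition of unity and still consists of pairwise disjoint nonzero elements. Define $h:\Pow(\omega)\to B$ by $h(S)=\bigvee_{n\in S}d_n$, which makes sense because $B$ is complete. I would then check the following. The map $h$ preserves arbitrary joins trivially. It preserves complements, since $h(S)\vee h(\omega\setminus S)=1$ and, by disjointness together with the infinite distributive law that holds in complete Boolean algebras, $h(S)\wedge h(\omega\setminus S)=0$. It is injective, because if $n\in S\setminus S'$ then $d_n\wedge h(S')=0$ while $d_n\le h(S)$.

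Therefore $h$ is a complete embedding. Its image $C$ is closed under arbitrary joins computed in $B$, and hence also under complements and arbitrary meets, so $C$ is a complete subalgebra of $B$ with $C\cong\Pow(\omega)$.

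The main point needing care is the distributivity step $a\wedge\bigvee_i x_i=\bigvee_i(a\wedge x_i)$, which is valid in any complete Boolean algebra. With that in hand, everything in the argument is elementary.
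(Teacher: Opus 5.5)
Your proof is correct. Step~2 is essentially the paper's argument. The paper keeps the remainder $e=(\bigvee_n d_n)^\ast$ as an extra index $\infty$ when $e\neq 0$, giving a partition of unity indexed by $\omega$ or $\omega\cup\{\infty\}$. You instead absorb it into $d_0$. After that, both proofs check joins, complements and injectivity of $S\mapsto\bigvee_{i\in S}c_i$ in the same way.

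The genuine difference is in Step~1. The paper splits into cases.
\begin{itemize}
\item If $B$ is atomic, it simply takes infinitely many distinct atoms.
\item Otherwise, it fixes a nonzero $a$ with no atom below it and repeatedly carves off $0<d_n<a_n$, with $a_{n+1}=a_n\wedge d_n^\ast$. The invariant is that each $a_n$ has no atom below it, so it can always be split again.
\end{itemize}
You replace this with a weaker invariant: the relative algebra $\{x\in B:x\le c_n\}$ is infinite. You preserve it using the injection $x\mapsto(x\wedge e,\,x\wedge e')$ into the product of the two relative algebras.

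Your route is uniform: it needs no atomic/non-atomic dichotomy and no completeness. It therefore shows that every infinite Boolean algebra has an infinite disjoint family. The paper's route implicitly uses completeness to pass from ``not every element is a join of atoms'' to the existence of a nonzero element with no atom below it. In exchange, it is more explicit in the atomic case, where the $d_n$ are just atoms.
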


\begin{proof}
If $B$ is atomic, then (since $B$ is infinite) it has infinitely many atoms.
Choose pairwise distinct atoms $d_n$.

Assume instead that $B$ is not atomic. Then there exists a nonzero $a\in B$ such that no atom of $B$ lies below $a$.
Set $a_0:=a$. Now for any $n \geq 0$, choose $d_n$ with $0<d_n<a_n$ and set $a_{n+1}:=a_n\wedge d_n^{\ast}$.
Then $a_{n+1}\neq 0$ (since $d_n<a_n$) and $d_n\wedge a_{n+1}=0$.
Note that for any $n \in \omega$, $a_n$ cannot be an atom (otherwise it would itself be an atom below $a$).
Moreover, no atom lies below $a_{n+1}$ (any such atom would lie below $a$), so the construction continues.
Thus $\langle d_n:n\in\omega\rangle$ is a family of pairwise disjoint nonzero elements.

Let
\[
e:=\Bigl(\bigvee_{n\in\omega} d_n\Bigr)^{\ast}.
\]
Define the index set
\[
I:=\begin{cases}
\omega, & \text{if } e=0,\\
\omega\cup\{\infty\}, & \text{if } e\neq 0,
\end{cases}
\]
and define elements $c_i\in B$ by $c_n:=d_n$ for $n\in\omega$, and, when $e\neq 0$, let $c_{\infty}:=e$.
Then the family $\{c_i:i\in I\}$ is pairwise disjoint, every $c_i$ is nonzero, and
\[
\bigvee_{i\in I} c_i=1.
\]
Now define
\[
\iota:\Pow(I)\to B,
\qquad
\iota(S):=\bigvee_{i\in S} c_i.
\]
We claim that $\iota$ is a Boolean algebra isomorphism onto its range.

To see that $\iota$ is injective, suppose $\iota(S)=\iota(T)$ and let $k\in S\setminus T$.
Then
\[
c_k\leq \bigvee_{i\in S} c_i = \bigvee_{i\in T} c_i,
\]
so, by pairwise disjointness,
\[
c_k=c_k\wedge\bigvee_{i\in T} c_i=\bigvee_{i\in T}(c_k\wedge c_i)=0,
\]
a contradiction. Hence $S\setminus T=\varnothing$, and by symmetry $T\setminus S=\varnothing$, so $S=T$.

Because the family $\{c_i:i\in I\}$ is pairwise disjoint and joins to $1$, we have for every $S\subseteq I$,
\[
\iota(I\setminus S)=\bigvee_{i\in I\setminus S} c_i
=\Bigl(\bigvee_{i\in S} c_i\Bigr)^{\ast}
=(\iota(S))^{\ast}.
\]
Also, for every family $\{S_j:j\in J\}\subseteq \Pow(I)$,
\[
\iota\Bigl(\bigcup_{j\in J} S_j\Bigr)=\bigvee_{j\in J}\iota(S_j),
\]
because both sides are the join of the same subfamily of the pairwise disjoint family $\{c_i:i\in I\}$.
Moreover,
\begin{align*}
\bigwedge_{j\in J}\iota(S_j)
&=\Bigl(\bigvee_{j\in J}(\iota(S_j))^{\ast}\Bigr)^{\ast}
 =\Bigl(\bigvee_{j\in J}\iota(I\setminus S_j)\Bigr)^{\ast} \\
&=\Bigl(\iota\Bigl(\bigcup_{j\in J}(I\setminus S_j)\Bigr)\Bigr)^{\ast}
 =\Bigl(\iota\Bigl(I\setminus \bigcap_{j\in J}S_j\Bigr)\Bigr)^{\ast}
 =\iota\Bigl(\bigcap_{j\in J}S_j\Bigr).
\end{align*}
Thus the range
\[
C:=\Bigl\{\,\bigvee_{i\in S} c_i : S\subseteq I\,\Bigr\}
\]
is a complete Boolean subalgebra of $B$, and $\iota$ is an isomorphism from $\Pow(I)$ onto $C$.
Since $I$ is countably infinite, $\Pow(I)\cong \Pow(\omega)$, and therefore $C\cong \Pow(\omega)$.
\end{proof}

\begin{proposition}\label{prop:complete-subalgebra-monotone}
Let $C\subseteq B$ be a complete Boolean subalgebra (with the induced Boolean operations).
Then
\[
\Logsub(B)\ \subseteq\ \Logsub(C).
\]
\end{proposition}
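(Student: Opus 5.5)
We prove the contrapositive: if $\varphi\notin\Logsub(C)$, then $\varphi\notin\Logsub(B)$. So fix a nonempty $W\subseteq C^+$ and a $C$-translation $T_C$ with $M_{C,W,T_C},w\not\models\varphi$ for some $w\in W$. We will build a $B$-translation $T_B$ such that $M_{B,W,T_B}$ is literally the same Kripke model as $M_{C,W,T_C}$.

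\emph{States and accessibility.} Since $C$ is a subalgebra, $0_C=0_B$, so $W\subseteq C^+\subseteq B^+$ is an admissible state set for $B$. Meets are computed the same way in $C$ and in $B$, so for $x,y\in W$ we have $x\wedge_C y\neq 0$ iff $x\wedge_B y\neq 0$. Hence the co-consistency relation induced on $W$ is the same in both models, and the frames coincide.

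\emph{Valuations.} Let $v=v_{T_C}:\Prop\to C\subseteq B$. By Lemma~\ref{lem:truth-value-surjective}, applied to the algebra $B$, for each $p$ there is an $L_B$-sentence $\sigma_{v(p)}$ with $\llbracket\sigma_{v(p)}\rrbracket_B=v(p)$. Put $T_B(p):=\sigma_{v(p)}$, so that $v_{T_B}=v$. The order of $C$ is the restriction of the order of $B$, so for $b\in W$ we have $b\le_C v(p)$ iff $b\le_B v(p)$. Therefore atomic satisfaction agrees in the two models.

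\emph{Transfer of truth.} Since the two models have the same frame and the same atomic truth at every state, a routine induction on formulas, as in the claims of Theorem~\ref{thm:KTB-completeness}, shows that they agree on all formulas. Hence $M_{B,W,T_B},w\not\models\varphi$, and so $\varphi\notin\Logsub(B)$.

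The only subtle point is that a $C$-translation assigns $L_C$-sentences, whose values are computed in $V^{(C)}$, not in $V^{(B)}$. We therefore never reuse $T_C$ directly; we transfer only the valuation $v$ and realize it in $B$ through Lemma~\ref{lem:truth-value-surjective}. Completeness of $C$ is not actually needed for this argument; what matters is that $C$ shares $0$, meet and order with $B$.
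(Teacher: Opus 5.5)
Your proof is correct and follows the same route as the paper. Like the paper, you argue by contraposition, reuse the state set $W\subseteq C^+\subseteq B^+$, note that order and meet (hence co-consistency and atomic satisfaction) agree in $C$ and $B$, and realize the $C$-valuation by $L_B$-sentences via Lemma~\ref{lem:truth-value-surjective}. The only cosmetic difference is that you realize the whole valuation rather than just the variables in $\mathrm{Var}(\varphi)$, so the two models coincide outright and the final induction is trivial.
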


\begin{proof}
Assume $\varphi\notin \Logsub(C)$.
Then there exist a nonempty $W\subseteq C^+$ and a $C$-translation $T_C:\Prop\to\Sent(L_C)$ such that
$M_{C,W,T_C}\not\models \varphi$.

Let $\mathrm{Var}(\varphi)$ be the finite set of propositional variables occurring in $\varphi$. For each
$p\in \mathrm{Var}(\varphi)$, let
\[
a_p:=\llbracket T_C(p)\rrbracket_C\in C,
\]
where the Boolean value is computed in the complete Boolean algebra $C$. By Lemma~\ref{lem:truth-value-surjective},
for each $p\in \mathrm{Var}(\varphi)$ there exists an $L_B$-sentence $\sigma_p$ such that
\[
\llbracket \sigma_p\rrbracket_B=a_p.
\]
Define a $B$-translation $T_B:\Prop\to\Sent(L_B)$ by setting
\[
T_B(p):=\sigma_p \qquad (p\in \mathrm{Var}(\varphi)),
\]
and choosing $T_B(p)$ arbitrarily otherwise. Then for every $p\in \mathrm{Var}(\varphi)$ we have
\[
v_{T_B}(p)=\llbracket T_B(p)\rrbracket_B=\llbracket \sigma_p\rrbracket_B=a_p.
\]
Thus $T_B$ realizes in $B$ exactly the same valuation on the variables occurring in $\varphi$ as $T_C$ realizes in $C$.

Now view $W$ as a nonempty subset of $B^+$. Because $C$ is a Boolean subalgebra of $B$, the order $\le$
and meet $\wedge$ on $W$ computed in $C$ agree with those computed in $B$. Hence the inherited co-consistency relation on
$W$ is the same whether it is computed in $C$ or in $B$.

We claim that for every subformula $\psi$ of $\varphi$ and every $w\in W$,
\[
M_{C,W,T_C},w\models \psi \quad\Longleftrightarrow\quad M_{B,W,T_B},w\models \psi.
\]
The proof is by induction on the complexity of $\psi$. If $\psi$ is atomic, say $\psi=p\in \mathrm{Var}(\varphi)$, then
\[
M_{C,W,T_C},w\models p
\iff w\le a_p
\iff w\le v_{T_B}(p)
\iff M_{B,W,T_B},w\models p,
\]
since the order on $W$ is the same in $C$ and in $B$. The Boolean cases are immediate from the induction hypothesis. For
$\Box\chi$ and $\Diamond\chi$, the induction step follows because the accessibility relation $R$ on $W$ is the same in both
models.

Applying the claim to $\psi=\varphi$, we obtain $M_{B,W,T_B}\not\models \varphi$. Hence
$\varphi\notin \Logsub(B)$, as required.
\end{proof}

\begin{theorem}\label{thm:infinite-algebras-KTB}
If $B$ is an infinite complete Boolean algebra, then
\[
\Logsub(B)=\KTB.
\]
\end{theorem}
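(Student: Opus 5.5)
The plan is to sandwich $\Logsub(B)$ between $\KTB$ and $\Logsub(\Pow(\omega))$, using the results already established.

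\emph{Lower bound.} By Remark~\ref{rem:logic-of-B-basic}, soundness in Theorem~\ref{thm:KTB-completeness} already gives $\KTB\subseteq\Logsub(B)$ for every complete Boolean algebra $B$. So only the reverse inclusion needs work.

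\emph{Reduction to $\Pow(\omega)$.} Since $B$ is infinite, Lemma~\ref{lem:contains-Pow-omega} yields a complete Boolean subalgebra $C\subseteq B$ with an isomorphism $\iota:\Pow(\omega)\cong C$. Proposition~\ref{prop:complete-subalgebra-monotone} then gives $\Logsub(B)\subseteq\Logsub(C)$.

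\emph{Identifying $\Logsub(C)$.} I would argue that $\Logsub(C)=\Logsub(\Pow(\omega))$, because $\Logsub$ is invariant under isomorphism of complete Boolean algebras. The model $M_{C,W,T}$ depends only on three things:
\begin{enumerate}[label=(\roman*)]
\item the set $W\subseteq C^+$;
\item the meet and order on $C$;
\item the valuation $v_T:\Prop\to C$.
\end{enumerate}
By Lemma~\ref{lem:truth-value-surjective}, applied to each algebra, the valuations realizable by translations are exactly all maps $\Prop\to C$, respectively all maps $\Prop\to\Pow(\omega)$. Transporting $W$ and $v_T$ along $\iota$ therefore gives isomorphic Kripke models. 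The only subtle point is that translations are defined syntactically through $V^{(C)}$ rather than algebraically. Surjectivity of the truth-value map lets us ignore the syntax entirely: for the variables of $\varphi$, we simply choose sentences realizing the transported values. This argument mirrors the proof of Proposition~\ref{prop:complete-subalgebra-monotone}.

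Combining this with Theorem~\ref{thm:PowX-KTB} for $X=\omega$ gives
\[
\Logsub(B)\subseteq\Logsub(C)=\Logsub(\Pow(\omega))=\KTB,
\]
which is the required reverse inclusion.

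\emph{Alternative route.} One could avoid the isomorphism step by rerunning the proof of Theorem~\ref{thm:PowX-KTB} directly in $C$. Given a finite reflexive symmetric countermodel, its finitely many tokens are sent injectively to the disjoint elements $c_n$, and each $b_w$ is taken to be the join of its tokens' images. Disjointness ensures that meets and order behave exactly as intersection and inclusion do on finite sets. Either route works.

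The main obstacle, though a mild one, is justifying this transport between isomorphic algebras rigorously. Meets and the order must be respected, and the valuation must be realizable by genuine $L_C$-sentences, which is where Lemma~\ref{lem:truth-value-surjective} is used.
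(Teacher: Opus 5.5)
Your proposal is correct and follows the paper's proof essentially step for step: soundness gives $\KTB\subseteq\Logsub(B)$, and the chain $\Logsub(B)\subseteq\Logsub(C)=\Logsub(\Pow(\omega))=\KTB$ comes from Lemma~\ref{lem:contains-Pow-omega}, Proposition~\ref{prop:complete-subalgebra-monotone}, and Theorem~\ref{thm:PowX-KTB}. Your explicit justification of the isomorphism-invariance step via Lemma~\ref{lem:truth-value-surjective} spells out what the paper leaves implicit, and your alternative route through the disjoint family is also sound.
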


\begin{proof}
By Theorem~\ref{thm:KTB-completeness} (soundness), $\KTB\subseteq \Logsub(B)$ for every $B$.
For the reverse inclusion, let $C\subseteq B$ be a complete subalgebra with $C\cong \Pow(\omega)$ as in
Lemma~\ref{lem:contains-Pow-omega}.
By Proposition~\ref{prop:complete-subalgebra-monotone},
\[
\Logsub(B)\ \subseteq\ \Logsub(C)
\ =\ \Logsub(\Pow(\omega))
\ =\ \KTB,
\]
where the last equality is Theorem~\ref{thm:PowX-KTB}.
\end{proof}

\subsection{Finite complete algebras}\label{subsec:finite-algebras}

We now turn our attention to the finite case, since the infinite one is already solved.

\begin{lemma}\label{lem:finite-powerset-algebra}
If $B$ is a finite complete Boolean algebra, then $B$ is atomic and
\[
B\ \cong\ \Pow(n)
\]
where $n$ is the number of atoms of $B$. \qed
\end{lemma}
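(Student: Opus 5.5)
The plan is to show first that every finite Boolean algebra is atomic, and then that the map sending an element to the set of atoms below it is an isomorphism onto $\Pow(\mathrm{At}(B))$, where $\mathrm{At}(B)$ is the set of atoms of $B$. Completeness is automatic: every subset of a finite lattice is finite, and finite joins and meets always exist. Since $B$ is finite, $\mathrm{At}(B)$ has some finite size $n$, and the isomorphism then gives $B\cong\Pow(n)$.

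For atomicity, I would first show that every nonzero $a\in B$ lies above some atom. Suppose $a$ is not an atom. Then there is $a_1$ with $0<a_1<a$. If $a_1$ is not an atom, pick $a_2$ with $0<a_2<a_1$, and continue. This gives a strictly decreasing chain, which must stop because $B$ is finite, and it stops exactly at an atom below $a$.

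Next, for any $a$ put $A_a:=\{x\in\mathrm{At}(B):x\le a\}$ and $s:=\bigvee A_a$; this is a finite join, so it exists. Clearly $s\le a$. Suppose $s<a$. Then $a\wedge s^\ast\neq 0$, since otherwise $a\le s$. By the previous step there is an atom $x\le a\wedge s^\ast$. Then $x\in A_a$, so $x\le s$, and also $x\le s^\ast$. Hence $x\le s\wedge s^\ast=0$, a contradiction. So $a=\bigvee A_a$, which gives atomicity in the sense of the paper's definition.

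Now define $h:B\to\Pow(\mathrm{At}(B))$ by $h(a):=A_a$. The identity $a=\bigvee A_a$ shows that $h$ is injective.

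For surjectivity, take any $S\subseteq\mathrm{At}(B)$ and show $A_{\bigvee S}=S$. The inclusion $\supseteq$ is clear. For $\subseteq$, let $x$ be an atom with $x\le\bigvee S$. Finite distributivity gives $x=\bigvee_{y\in S}(x\wedge y)$. For distinct atoms $x\neq y$ we have $x\wedge y=0$, since $x\wedge y$ is either $0$ or equal to both $x$ and $y$. So if $x\notin S$, the join would be $0$, which is impossible because $x\neq 0$. Hence $x\in S$.

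Finally, $h$ preserves meets, because $x\le a\wedge b$ iff $x\le a$ and $x\le b$. It preserves complements: an atom $x$ satisfies exactly one of $x\le a$ or $x\le a^\ast$. Indeed, $x\wedge a$ is either $0$ or $x$; in the first case $x\le a^\ast$, and the two alternatives cannot both hold since $x\neq 0$. Preservation of joins then follows from De Morgan, so $h$ is a Boolean isomorphism.

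The only step needing care is surjectivity, that is, showing that no unexpected atom lies below a join of atoms; that is where distributivity is used. Everything else is routine.
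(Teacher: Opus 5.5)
Your proof is correct and complete. The descending-chain argument puts an atom below every nonzero element, and the identity $a=\bigvee\{x\in\mathrm{At}(B):x\le a\}$ gives both atomicity and injectivity. The distributivity step correctly rules out extra atoms below $\bigvee S$, so $a\mapsto\{x\in\mathrm{At}(B):x\le a\}$ is a Boolean isomorphism onto $\Pow(\mathrm{At}(B))\cong\Pow(n)$. The paper gives no proof; it closes the lemma immediately with an end-of-proof mark, treating it as a standard fact, and your argument is the usual representation of finite Boolean algebras by their atoms that it relies on.
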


\begin{corollary}\label{cor:finite-reduction-decidable}
Let $B$ be a finite complete Boolean algebra with $n$ atoms. Then
\[
\Logsub(B)=\Logsub(\Pow(n)).
\]
In particular, $\Logsub(B)$ is decidable.
\end{corollary}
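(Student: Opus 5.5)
The proof has two parts: transport the logic along the isomorphism of Lemma~\ref{lem:finite-powerset-algebra}, then show decidability by reducing validity to a finite search.

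\emph{Transport along an isomorphism.} Fix an isomorphism $h:B\to\Pow(n)$, which exists by Lemma~\ref{lem:finite-powerset-algebra}. We show that $\Logsub$ is invariant under isomorphism of complete Boolean algebras.

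Suppose $\varphi\notin\Logsub(B)$. Then there are a nonempty $W\subseteq B^+$ and a $B$-translation $T$ with $M_{B,W,T}\not\models\varphi$. Put $W':=h[W]$, which is a subset of $\Pow(n)^+$ because $h(0)=0$ and $h$ is injective. For each $p\in\mathrm{Var}(\varphi)$, Lemma~\ref{lem:truth-value-surjective} gives a $\Pow(n)$-translation $T'$ with $v_{T'}(p)=h(v_T(p))$.

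Since $h$ preserves and reflects both $\le$ and $\wedge$, it preserves and reflects $0$. Hence $h$ maps the co-consistency relation on $W$ exactly onto the one on $W'$, and it maps atomic satisfaction onto atomic satisfaction:
\[
w\le v_T(p)\iff h(w)\le v_{T'}(p).
\]
An induction on subformulas of $\varphi$, exactly as in the claims inside the proof of Theorem~\ref{thm:KTB-completeness}, then shows that $h$ transfers truth of every subformula. So $M_{\Pow(n),W',T'}\not\models\varphi$. Running the same argument with $h^{-1}$ gives the reverse inclusion, and therefore $\Logsub(B)=\Logsub(\Pow(n))$.

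\emph{Reduction to a finite search.} Membership of $\varphi$ in $\Logsub(\Pow(n))$ depends only on the following data:
\begin{itemize}
\item the choice of a nonempty $W\subseteq\Pow(n)^+$, and there are finitely many such $W$;
\item the valuation restricted to $\mathrm{Var}(\varphi)$, namely a map $\mathrm{Var}(\varphi)\to\Pow(n)$, of which there are finitely many.
\end{itemize}
The translation $T$ matters only through $v_T$ on $\mathrm{Var}(\varphi)$. By Lemma~\ref{lem:truth-value-surjective}, every map $\mathrm{Var}(\varphi)\to\Pow(n)$ is realized as such a $v_T$. So quantifying over all translations is the same as quantifying over these finitely many maps.

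\emph{Decision procedure.} For each pair of a state set $W$ and a valuation on $\mathrm{Var}(\varphi)$, model checking $\varphi$ on the finite Kripke model is a finite computation. We output that $\varphi\in\Logsub(\Pow(n))$ exactly when every such check succeeds. This decides $\Logsub(B)$.

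The one point that needs care is that the semantics nominally ranges over syntactic translations into $L_B$, which form a proper class. The reduction above handles this: only $v_T$ on $\mathrm{Var}(\varphi)$ affects the truth of $\varphi$, and Lemma~\ref{lem:truth-value-surjective} shows that every finite valuation is realized, so the class of translations collapses to a finite set of valuations.
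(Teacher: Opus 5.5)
Your proposal is correct and follows the same approach as the paper: transfer validity along the isomorphism $B\cong\Pow(n)$ by moving states and Boolean values across, then decide membership by brute-force model checking over the finitely many nonempty $W\subseteq\Pow(n)^+$ and valuations $\mathrm{Var}(\varphi)\to\Pow(n)$. You make explicit what the paper leaves implicit, namely using Lemma~\ref{lem:truth-value-surjective} to realize the transported values by translations, and noting that the proper class of translations collapses to finitely many relevant valuations.
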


\begin{proof}
The first claim follows from Lemma~\ref{lem:finite-powerset-algebra} together with invariance of validity under
Boolean isomorphisms (transporting states and Boolean values along the isomorphism).
For decidability, note that $\Pow(n)$ is finite, so for any fixed formula $\varphi$ there are only finitely many
choices of nonempty $W\subseteq \Pow(n)^+$ and finitely many choices of Boolean values $v_T(p)\in\Pow(n)$ for the finitely
many variables occurring in $\varphi$. Hence one can decide membership of $\varphi$ in $\Logsub(\Pow(n))$
by brute force model checking.
\end{proof}

Interestingly, the finite case generates a whole hierarchy of logics which approximate $\KTB$. 

\begin{lemma}\label{lem:pow-monotonicity}
If $n\le m$, then $\Pow(n)$ is isomorphic to a complete Boolean subalgebra of $\Pow(m)$.
Consequently,
\[
\Logsub(\Pow(m))\ \subseteq\ \Logsub(\Pow(n)).
\]
\end{lemma}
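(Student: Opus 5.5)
The plan is to build an explicit embedding of $\Pow(n)$ into $\Pow(m)$ as a complete subalgebra, and then apply Proposition~\ref{prop:complete-subalgebra-monotone}. The edge cases come first. If $n=0$, then $\Pow(0)$ is the degenerate one-element algebra; it is trivially handled, or excluded if Boolean algebras are required to have $0\neq 1$. So assume $1\le n\le m$.

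The key step is to partition $m=\{0,\dots,m-1\}$ into $n$ nonempty blocks. Take
\[
P_i:=\{i\} \quad (i<n-1), \qquad P_{n-1}:=\{n-1,\dots,m-1\}.
\]
These blocks are pairwise disjoint, nonempty, and their union is $m$. Then define
\[
\iota:\Pow(n)\to\Pow(m),\qquad \iota(S):=\bigcup_{i\in S}P_i.
\]

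Next comes the verification, following the argument of Lemma~\ref{lem:contains-Pow-omega} with $c_i:=P_i$. Unions are preserved by definition. Complements are preserved because the blocks partition $m$, so $\iota(n\setminus S)=m\setminus\iota(S)$. Intersections are preserved by disjointness of the blocks. Injectivity follows from nonemptiness: if $k\in S\setminus T$, then $P_k\subseteq\iota(S)$ but $P_k\cap\iota(T)=\varnothing$. Since both algebras are finite, preserving finite meets and joins already means preserving arbitrary ones. Hence the range $C:=\iota[\Pow(n)]$ is a complete Boolean subalgebra of $\Pow(m)$, and $\iota$ is an isomorphism from $\Pow(n)$ onto $C$.

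Finally, Proposition~\ref{prop:complete-subalgebra-monotone} gives $\Logsub(\Pow(m))\subseteq\Logsub(C)$. Because validity is invariant under Boolean isomorphism, as already used in Corollary~\ref{cor:finite-reduction-decidable}, we have $\Logsub(C)=\Logsub(\Pow(n))$. Chaining the two yields the claim. No step is a genuine obstacle. The only points needing care are choosing nonempty blocks (this is what makes $\iota$ injective) and observing that completeness of the subalgebra is automatic in the finite setting.
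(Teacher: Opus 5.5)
Your proposal matches the paper's proof essentially step for step: the same partition of $m$ into blocks $\{i\}$ ($i<n-1$) and $\{n-1,\dots,m-1\}$, the same embedding $S\mapsto\bigcup_{i\in S}X_i$, then Proposition~\ref{prop:complete-subalgebra-monotone} together with invariance of validity under Boolean isomorphism. Your added checks (injectivity from nonempty blocks, completeness automatic by finiteness) just spell out what the paper leaves implicit. One correction on the edge case: for $n=0<m$ the subalgebra claim does not hold trivially but fails, since any subalgebra of $\Pow(m)$ contains its distinct $0$ and $1$. The logic inclusion still holds vacuously because $\Pow(0)^+=\varnothing$, so $n\ge 1$ should simply be assumed, as the paper tacitly does.
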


\begin{proof}
Choose a partition of an $m$-element set into $n$ nonempty blocks
\[
X_0,\dots,X_{n-1}.
\]
(For example, if we identify $m$ with $\{0,\dots,m-1\}$, we may take
$X_i=\{i\}$ for $i<n-1$ and $X_{n-1}=\{n-1,\dots,m-1\}$.)
Define
\[
e:\Pow(n)\to \Pow(m),\qquad e(S):=\bigcup_{i\in S} X_i.
\]
Then $e$ preserves arbitrary unions, intersections, complements, $0$, and $1$, so it is a complete
Boolean embedding. Its range
\[
C:=\left\{\bigcup_{i\in S}X_i : S\subseteq n\right\}
\]
is therefore a complete Boolean subalgebra of $\Pow(m)$, and $e$ is a Boolean isomorphism
$\Pow(n)\cong C$.

By Proposition~\ref{prop:complete-subalgebra-monotone},
\[
\Logsub(\Pow(m))\ \subseteq\ \Logsub(C).
\]
Since $C\cong \Pow(n)$, validity is invariant under Boolean isomorphism, and hence
\[
\Logsub(C)=\Logsub(\Pow(n)).
\]
Therefore
\[
\Logsub(\Pow(m))\ \subseteq\ \Logsub(\Pow(n)).
\]
\end{proof}

Together with Theorem~\ref{thm:infinite-algebras-KTB}, this yields a descending chain
\[
\Logsub(\Pow(1))\ \supseteq\ \Logsub(\Pow(2))\ \supseteq\ \cdots\ \supseteq\ \KTB,
\]
whose limit is $\KTB$.

\medskip

Since $\Pow(n)^+$ is finite, Corollary~\ref{cor:finite-reduction-decidable} already shows that
$\Logsub(\Pow(n))$ is decidable. We do not attempt here to give a complete
axiomatization for each fixed $n$. Nevertheless, Jankov--Fine characteristic formulas provide a
useful way to exhibit \emph{additional} principles belonging to $\Logsub(\Pow(n))$.

More precisely, let $F=(W_F,R_F,r)$ be a finite rooted frame, and let $\chi_F$ be its
Jankov--Fine formula. If $F$ cannot occur as a generated subframe of any induced subframe of
$(\Pow(n)^+,R)$, then $\chi_F$ is valid in every induced subframe of $(\Pow(n)^+,R)$, and hence
\[
\chi_F\in \Logsub(\Pow(n)).
\]
Thus characteristic formulas give a systematic method for proving that
$\Logsub(\Pow(n))$ properly extends $\mathsf{KTB}$. 



More concretly, let $F=(W_F,R_F,r)$ be a finite \emph{rooted} frame.
Introduce propositional variables $\{p_w:w\in W_F\}$ and consider the ``diagram'' formula
\begin{align*}
\delta_F :=\ &p_r\ \wedge\ \bigwedge_{u\neq v}\Box\neg(p_u\wedge p_v)\ \wedge\ \bigwedge_{u\in W_F}\Box\Bigl(p_u\to\Box\bigvee_{v\in W_F}p_v\Bigr)\\
&\wedge\ \bigwedge_{u\in W_F}\Box\Bigl(p_u\to\bigl(\ \bigwedge_{uR_F v}\Diamond p_v\ \wedge\ \bigwedge_{\neg(uR_F v)}\neg\Diamond p_v\ \bigr)\Bigr).
\end{align*}
Intuitively, $\delta_F$ asserts that the $p_w$-worlds form a generated copy of $F$ with root satisfying $p_r$.
The corresponding Jankov--Fine formula is
\[\chi_F:=\neg\delta_F.\]
Then $\chi_F$ is valid on a frame $G$ iff $F$ is \emph{not} present as a generated subframe of $G$ (equivalently,
$F$ is not a bounded-morphic image of a generated subframe of $G$).

\smallskip
\noindent\emph{Example  ($n=2$): excluding a triangle.}
In $(\Pow(2)^+,R)$ there is no triple of subsets of $\{1,2\}$ that are pairwise intersecting.
Consequently, the reflexive triangle frame $K_3^\mathrm{r}$ (three worlds, each related to itself and to the other two)
\emph{cannot} occur as a subframe of $(\Pow(2)^+,R)$.
Hence $\chi_{K_3^\mathrm{r}}\in\Logsub(\Pow(2))$.
In this case, since $K_3^\mathrm{r}$ has full accessibility relation, the diagram formula simplifies to
\[
\delta_{K_3^\mathrm{r}} = p_0\ \wedge\ \bigwedge_{0\le i<j\le 2}\Box\neg(p_i\wedge p_j)\ \wedge\ \bigwedge_{i=0}^2\Box\Bigl(p_i\to (\Diamond p_0\wedge\Diamond p_1\wedge\Diamond p_2)\Bigr)\ \wedge\ \bigwedge_{i=0}^2\Box\Bigl(p_i\to\Box(p_0\vee p_1\vee p_2)\Bigr),
\]
so $\chi_{K_3^\mathrm{r}}=\neg\delta_{K_3^\mathrm{r}}$ is an explicit additional axiom over $\KTB$.

\section{Generic ultrafilters and the external forcing modality}\label{sec:generic-forcing}

The internal semantics developed in Sections~\ref{sec:extended-semantics}--\ref{sec:results} uses the
\emph{compatibility} (or \emph{co-consistency}) relation on a fixed complete Boolean algebra $B$ and
thereby yields an all-state ``internal'' modality. In the usual forcing construction, however,
one passes from a Boolean-valued universe to a classical two-valued model by quotienting
with a \emph{generic} ultrafilter, and one then studies modality \emph{externally} by moving between
models via forcing extensions (Hamkins--L\"owe \cite{HamkinsLowe}).

In this section we make the comparison precise at the level of \emph{set-theoretic sentences} and
explain why the external forcing modality gives rise to the modal system $\mathsf{S4.2}$, in sharp
contrast with the internal compatibility modality. In the all-state semantics of Sections~\ref{sec:extended-semantics}--\ref{sec:results},
the isolated state $0$ forces failures of reflexive principles such as $(T)$. By contrast, in the
translation-based nonzero-state semantics of Sections~\ref{sec:completeness}--\ref{sec:specific-boolean-logics},
the exact global logic is $\KTB$.

\subsection{Generic ultrafilters and forcing extensions}\label{subsec:generic-ultrafilters}

Fix a transitive ground model $M\models\mathsf{ZFC}$ and a complete Boolean algebra $B\in M$.
A filter $G\subseteq B$ is \emph{$M$-generic} if it meets every dense subset of $B$ belonging to $M$.
(In the Boolean-algebra presentation of forcing, such a generic filter is automatically an ultrafilter.)
Whenever $G$ is $M$-generic, the quotient $M^{(B)}/G$ is (canonically) isomorphic to the usual forcing
extension $M[G]$.

We will use the Boolean-valued quotient semantics of Definition~\ref{def:ultrafilter-quotient}.
Theorem~\ref{thm:ultrafilter-truth-lemma} then specializes to the familiar forcing ``Truth Lemma'':
for every $L_{\in}$-sentence $\sigma$,
\[
M^{(B)}/G \models \sigma
\quad\Longleftrightarrow\quad
\val{\sigma}\in G.
\]

\begin{proposition}\label{prop:boolean-forcing}
Let $M\models\mathsf{ZFC}$ be a transitive ground model, let $B\in M$ be a complete Boolean algebra,
and let $\sigma\in\Sent(\LP)$.
For every $b\in B$ the following hold:
\begin{enumerate}[label=(\alph*)]
\item $b\le \val{\sigma}$ iff for every $M$-generic ultrafilter $G\subseteq B$ with $b\in G$ we have
$M^{(B)}/G\models \sigma$.
\item $b\wedge \val{\sigma}\neq 0$ iff there exists an $M$-generic ultrafilter $G\subseteq B$ with
$b\in G$ and $M^{(B)}/G\models \sigma$.
\end{enumerate}
\end{proposition}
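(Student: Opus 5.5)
The plan is to derive both clauses from the forcing Truth Lemma stated just above, $M^{(B)}/G\models\sigma \iff \val{\sigma}\in G$, where $\val{\sigma}$ is computed inside $M$ and lies in $B\in M$. To this we add one existence fact: for every nonzero $c\in B$ there is an $M$-generic ultrafilter $G\subseteq B$ with $c\in G$.

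The existence fact is the only non-formal step, and the main obstacle. The statement only says ``transitive ground model''. I would make explicit the standard assumption that $M$ is countable, or equivalently that we work in the usual metamathematical setting where generics over $M$ are available. Under this assumption, enumerate the countably many dense subsets $D_0,D_1,\dots$ of $B^+$ that belong to $M$. Build a descending sequence $c=c_0\ge c_1\ge\cdots$ of nonzero elements with $c_{n+1}\in D_n$; this is the Rasiowa--Sikorski construction. Let $G$ be the upward closure of $\{c_n\}$. Then $G$ is a filter containing $c$ that meets every dense set in $M$. As recalled in the text, such a filter is an ultrafilter: for any $a\in B$, the set of nonzero elements below $a$ or below $a^\ast$ is dense and lies in $M$. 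Without countability (or a comparable hypothesis), the right-to-left direction of (a) and the left-to-right direction of (b) could fail vacuously, so I would state this hypothesis explicitly.

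For (a), left to right: if $b\le\val{\sigma}$ and $b\in G$, then $\val{\sigma}\in G$ by upward closure, so $M^{(B)}/G\models\sigma$ by the Truth Lemma. For (a), right to left, argue by contraposition. If $b\not\le\val{\sigma}$, then $c:=b\wedge\val{\sigma}^\ast\neq 0$. Choose a generic $G$ with $c\in G$. Then $b\in G$ and $\val{\sigma}^\ast\in G$, so $\val{\sigma}\notin G$, hence $M^{(B)}/G\not\models\sigma$. This also covers $b=0$ correctly: no filter contains $0$, and $0\le\val{\sigma}$ holds trivially.

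For (b), left to right: if $c:=b\wedge\val{\sigma}\neq0$, choose a generic $G$ containing $c$. Then $b,\val{\sigma}\in G$, and the Truth Lemma gives $M^{(B)}/G\models\sigma$. For (b), right to left: if $b\in G$ and $M^{(B)}/G\models\sigma$, then $\val{\sigma}\in G$, so $b\wedge\val{\sigma}\in G$ and hence is nonzero, since $0\notin G$. This mirrors the argument of Proposition~\ref{prop:diamond-ultrafilter-reading}, with generic ultrafilters in place of arbitrary ones.
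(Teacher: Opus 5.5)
Your proof is correct and follows the paper's argument step for step. It combines the Truth Lemma for $M^{(B)}/G$ with the existence of an $M$-generic ultrafilter through any nonzero $c$, applied to $c=b\wedge\val{\sigma}^\ast$ for (a) and to $c=b\wedge\val{\sigma}$ for (b). The only difference is that you spell out the existence step, via countability of $M$ and the Rasiowa--Sikorski construction, where the paper just cites ``standard forcing arguments (forcing below $c$)''; your remark that such a hypothesis is genuinely needed is accurate.
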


\begin{proof}
(a) Suppose $b\le \val{\sigma}$ and let $G$ be $M$-generic with $b\in G$. Since $G$ is upward closed,
$\val{\sigma}\in G$, and hence $M^{(B)}/G\models \sigma$ by Theorem~\ref{thm:ultrafilter-truth-lemma}.
Conversely, if $b\not\le \val{\sigma}$ then $c:=b\wedge \val{\neg\sigma}\neq 0$. By standard forcing
arguments (forcing below $c$) there is an $M$-generic $G$ with $c\in G$, hence $b\in G$ and
$\val{\neg\sigma}\in G$, so $M^{(B)}/G\models\neg\sigma$ by the Truth Lemma, contradicting the
assumption.

(b) If $b\wedge \val{\sigma}\neq 0$, let $c:=b\wedge \val{\sigma}$. Forcing below $c$ produces an
$M$-generic ultrafilter $G$ with $c\in G$, hence $b\in G$ and $\val{\sigma}\in G$, so
$M^{(B)}/G\models\sigma$. Conversely, if such $G$ exists then $b,\val{\sigma}\in G$, hence
$b\wedge \val{\sigma}\in G$ and therefore $b\wedge \val{\sigma}\neq 0$.
\end{proof}

\begin{corollary}\label{cor:atomic-forceability}
Let $p$ be a propositional variable corresponding to an $L_{\in}$-sentence.
For every $b\in B$,
\[
M_B,b\models \Diamond p
\quad\Longleftrightarrow\quad
\exists\text{ $M$-generic ultrafilter $G\subseteq B$ with $b\in G$ and }M^{(B)}/G\models p.
\]
\end{corollary}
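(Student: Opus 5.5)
The plan is to reduce the modal clause to a purely Boolean-algebraic condition and then apply Proposition~\ref{prop:boolean-forcing}(b). Here $M_B$ is computed inside the ground model $M$, so $\val{\cdot}$ is the Boolean valuation of $M^{(B)}$. Concretely, I will show
\[
M_B,b\models\Diamond p \iff b\wedge\val{p}\neq 0 ,
\]
and the right-hand side is exactly condition (b) of Proposition~\ref{prop:boolean-forcing} with $\sigma:=p$. That proposition turns it into the existence of an $M$-generic ultrafilter $G$ with $b\in G$ and $M^{(B)}/G\models p$.

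For the displayed equivalence I will argue directly from Definition~\ref{def:MB} and Definition~\ref{def:R}. For ($\Rightarrow$), suppose $M_B,b\models\Diamond p$. Then there is $a$ with $b\wedge a\neq0$ and $a\le\val{p}$, so $b\wedge a\le b\wedge\val{p}$ and hence $b\wedge\val{p}\neq0$. For ($\Leftarrow$), take $a:=\val{p}$ as the witness: we have $bRa$ because $b\wedge\val{p}\neq0$, and $M_B,a\models p$ because $a\le\val{p}$. This is the same witness choice as in Proposition~\ref{prop:diamond-ultrafilter-reading}, now with ultrafilters bypassed.

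The only genuine content is in Proposition~\ref{prop:boolean-forcing}(b), which is assumed. There one needs an $M$-generic ultrafilter containing the nonzero element $c=b\wedge\val{p}$. As in the paper's own framework, this is the standard step of forcing below $c$; over a countable transitive $M$ it follows from Rasiowa--Sikorski. So the main obstacle is only to make sure that the ambient setting (a countable transitive $M$, or the usual metamathematical reading of forcing) licenses that existence claim. Beyond that, the corollary is a two-line composition of the Boolean reduction with Proposition~\ref{prop:boolean-forcing}(b).
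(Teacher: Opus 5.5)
Your proposal is correct and is essentially the paper's argument: reduce $M_B,b\models\Diamond p$ to $b\wedge\val{p}\neq 0$ and then invoke Proposition~\ref{prop:boolean-forcing}(b). The differences are cosmetic. You take the witness $a:=\val{p}$ where the paper takes $a:=b\wedge\val{p}$, and the paper runs the converse through the Truth Lemma directly rather than through the converse half of (b). Your caveat that producing an $M$-generic ultrafilter below a nonzero $c$ needs something like countability of $M$ (Rasiowa--Sikorski) bears on Proposition~\ref{prop:boolean-forcing} itself, not on your derivation of the corollary.
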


\begin{proof}
By Definition~\ref{def:MB}, $M_B,b\models \Diamond p$ iff there exists $a$ with $bRa$ and
$a\le\val{p}$. Using Remark~\ref{rem:R-equivalences}, $bRa$ implies $b\wedge a\neq 0$, hence
$b\wedge \val{p}\neq 0$. Now apply Proposition~\ref{prop:boolean-forcing}(b).
Conversely, if some $M$-generic $G$ contains $b$ and makes $p$ true, then $\val{p}\in G$ by the Truth
Lemma, so $b\wedge \val{p}\neq 0$, and taking $a:=b\wedge \val{p}$ yields a witness for
$M_B,b\models\Diamond p$.
\end{proof}

\subsection{Forcing potentialism and S4.2}\label{subsec:forcing-S4.2}

Hamkins and L\"owe \cite{HamkinsLowe} analyze the \emph{external} forcing modality in which the worlds
are (set-theoretic) models and accessibility is given by \emph{forcing extension}.
Concretely, fix a transitive ground model $M\models\mathsf{ZFC}$ and let $\mathcal W$ be the collection
of all forcing extensions of $M$:
\[
\mathcal W\ :=\ \{\,M[G]: \text{$G$ is $M$-generic for some forcing notion in $M$}\,\}.
\]
Define $N\preccurlyeq N'$ for $N,N'\in\mathcal W$ iff $N'$ is a forcing extension of $N$.
The forcing extension relation is:
\begin{itemize}[leftmargin=2em]
\item \emph{reflexive} (trivial forcing),
\item \emph{transitive} (iterated forcing), and
\item \emph{directed} (any two extensions have a common further extension, e.g.\ by product forcing).
\end{itemize}
The directedness property is exactly the relational condition corresponding to the modal axiom $(.2)$.

\begin{definition}\label{def:external-forcing-modality}
Let $\varphi$ be a modal formula whose propositional variables are interpreted as $L_{\in}$-sentences.
In the Kripke model $(\mathcal W,\preccurlyeq,V)$ where $V(p)=\{N\in\mathcal W: N\models p\}$, we write
\[
M\models \Box_F \varphi
\]
to mean that $\varphi$ holds in \emph{all} forcing extensions of $M$, and
\[
M\models \Diamond_F \varphi
\]
to mean that $\varphi$ holds in \emph{some} forcing extension of $M$.
\end{definition}

\begin{proposition}[\cite{HamkinsLowe} ]\label{prop:S4.2-soundness}
The external forcing modality validates the axioms $(K)$, $(T)$, $(4)$ and $(.2)$, and hence all
theorems of $\mathsf{S4.2}$.
\end{proposition}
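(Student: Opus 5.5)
The plan is the standard frame-correspondence argument. Each listed axiom is valid on every Kripke frame whose accessibility relation has the matching first-order property, and each of those properties is verified directly for the forcing-extension relation $\preccurlyeq$ on $\mathcal W$. Since the modal operators $\Box_F,\Diamond_F$ are, by Definition~\ref{def:external-forcing-modality}, just the Kripke clauses for the model $(\mathcal W,\preccurlyeq,V)$, validity of the axioms at every world (in particular at $M$) follows. Closure of the validities under modus ponens and necessitation then gives all theorems of $\mathsf{S4.2}$, since necessitation preserves validity in a model.

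First, (K) holds in every Kripke model, as already used in Theorem~\ref{thm:general-valid}.

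Second, I check the three relational properties of $\preccurlyeq$.

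\emph{Reflexivity.} Take the trivial forcing $\{1\}$: the unique generic filter $G$ gives $N[G]=N$.

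\emph{Transitivity.} If $N'=N[G]$ for $\mathbb P\in N$ and $N''=N'[H]$ for $\dot{\mathbb Q}^G\in N'$, then $N''=N[G*H]$ for the two-step iteration $\mathbb P*\dot{\mathbb Q}$. This is the standard factor/iteration lemma.

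\emph{Directedness.} Given $N[G]$ and $N[H]$ for $\mathbb P,\mathbb Q\in N$, the naive product $\mathbb P\times\mathbb Q$ does not work, because $G\times H$ need not be mutually generic. Instead I would argue as Hamkins--L\"owe do. Choose a cardinal $\theta$ above $|\mathbb P|,|\mathbb Q|$ in $N$ and consider $\mathrm{Coll}(\omega,\theta)$. Any forcing of size at most $\theta$ followed by $\mathrm{Coll}(\omega,\theta)$ is equivalent to $\mathrm{Coll}(\omega,\theta)$ (the absorption lemma for the Lévy collapse). So forcing over $N[G]$ with $\mathrm{Coll}(\omega,\theta)$ yields $N[K]$ with $K$ being $N$-generic for $\mathrm{Coll}(\omega,\theta)$. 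Similarly, forcing over $N[H]$ gives $N[K']$. By homogeneity we can arrange a common extension: force over $N[K]$ to add a generic for $\mathrm{Coll}(\omega,\theta)$ over $N[H]$, again using absorption. This yields $N[G][K]$ and $N[H][K']$ with a common further extension. This step is the main obstacle, and I would simply cite \cite{HamkinsLowe} for the directedness lemma.

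One technical point is that ``$N'$ is a forcing extension of $N$'' must be taken with generics existing in a suitable outer universe, for example with $M$ countable so that generics exist. This is implicit in the definition of $\mathcal W$.

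Third, the correspondences themselves. Reflexivity gives (T): if $\Box_F\varphi$ holds at $N$, then $N\preccurlyeq N$ gives $\varphi$ at $N$. Transitivity gives (4): successors of successors are successors, so $\Box_F\varphi$ at $N$ yields $\Box_F\varphi$ at each successor. Directedness gives (.2): if $N\preccurlyeq N_1$ with $\Box_F\varphi$ at $N_1$, and $N\preccurlyeq N_2$, pick $N_3$ extending both. Then $\varphi$ holds at $N_3$, so $\Diamond_F\varphi$ holds at $N_2$.

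Finally, $\mathsf{S4.2}$ is the smallest normal logic containing these axioms. Since validity in a fixed Kripke model is closed under modus ponens and necessitation, every theorem of $\mathsf{S4.2}$ is valid. The remaining question is uniform substitution when variables are interpreted as sentences. This is handled by proving frame validity on $(\mathcal W,\preccurlyeq)$ for all valuations, which in particular covers the valuation $V$.
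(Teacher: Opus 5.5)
Your overall route is the paper's. You check that $\preccurlyeq$ is reflexive, transitive and directed, and then read off $(T)$, $(4)$, $(.2)$ by frame correspondence. The parts for $(K)$, $(T)$, $(4)$ are fine. The directedness step is a genuine gap, and the trouble is not only your justification of it.

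\textbf{Why your directedness argument fails.} The collapse argument never produces a model containing both $G$ and $H$. The model $N[K]$ contains $G$, but adjoining $H$ to it would require $H$ to be generic over $N[K]$. That is the same mutual-genericity assumption that sinks the naive product. Homogeneity of $\mathrm{Coll}(\omega,\theta)$ only says that $N[K]$ and $N[K']$ satisfy the same sentences with ground parameters. It does not say they sit inside a common model.

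\textbf{Why no argument can work.} Over a countable transitive $M$ one can build $M$-generic Cohen reals $c,d$ that jointly code a well-order of type $\mathrm{Ord}^M$, by interleaving the steps that meet the dense sets of $M$ with coding steps. Then $M[c]$ and $M[d]$ have no common extension with the same ordinals, so $(\mathcal W,\preccurlyeq)$ is not directed. Since $(.2)$ is a Sahlqvist formula whose frame correspondent is directedness, $(.2)$ is not frame-valid there. Concretely, the valuation $p\mapsto\{N\in\mathcal W: M[c]\preccurlyeq N\}$ refutes it at $M$. So the plan in your last paragraph, proving frame validity for all valuations, cannot succeed. You were right to distrust the product. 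The paper's own proof relies on the same directedness bullet (``e.g.\ by product forcing'') and has the same defect.

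\textbf{The missing idea.} This is how Hamkins--L\"owe \cite{HamkinsLowe} actually argue: use the fact that $V$ is given by sentences, rather than a property of the frame.
\begin{itemize}
\item By induction, every modal $\psi$ has an $L_\in$-sentence $\psi^*$ such that $\psi$ holds at $N\in\mathcal W$ iff $N\models\psi^*$. Here $(\Box\psi)^*:=\forall\mathbb Q\,(1\Vdash_{\mathbb Q}\psi^*)$. This step uses the forcing theorem and the countability of $N$, so that every condition lies in some generic.
\item Suppose $N[G]\models(\Box\psi)^*$ with $G\subseteq\mathbb P$, and fix $p\in G$ forcing $(\Box\psi)^*$.
\item Let $N[H]$, with $H\subseteq\mathbb Q$, be any extension of $N$. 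Choose $G'\ni p$ that is $\mathbb P$-generic over $N[H]$.
\item By the product lemma, $N[H][G']=N[G'][H]$ is a forcing extension of both $N[H]$ and $N[G']$.
\item Since $p\in G'$, we have $N[G']\models(\Box\psi)^*$, so $\psi$ holds in $N[G'][H]$. Hence $\Diamond_F\psi$ holds at $N[H]$.
\end{itemize}
The essential move is replacing the given $G$ by a fresh $G'$ that is mutually generic with $H$. Your homogeneity intuition is reaching for this, but the move works on truth of sentences, not on the models themselves. What it gives is validity of every substitution instance of $(.2)$ in the specific model $(\mathcal W,\preccurlyeq,V)$, not validity on the frame. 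Combined with the closure under modus ponens and necessitation that you already have, this yields all theorems of $\mathsf{S4.2}$.
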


\begin{proof}
Axiom $(K)$ is valid in every Kripke model.
Axiom $(T)$ holds because $\preccurlyeq$ is reflexive.
Axiom $(4)$ holds because $\preccurlyeq$ is transitive.
For $(.2)$, assume $M\models \Diamond_F\Box_F\psi$, so there is an extension $N\succeq M$ with
$N\models \Box_F\psi$. Let $N'\succeq M$ be arbitrary. By directedness there is a common extension
$K\succeq N,N'$. Since $N\models \Box_F\psi$, we have $K\models\psi$, whence $N'\models\Diamond_F\psi$.
Thus $M\models \Box_F\Diamond_F\psi$.
\end{proof}

\begin{theorem}[\cite{HamkinsLowe} ]\label{thm:HamkinsLowe-S4.2}
Assuming $\mathsf{ZFC}$ is consistent, the $\mathsf{ZFC}$-provable modal principles valid under the
external forcing interpretation are exactly those of $\mathsf{S4.2}$.
\end{theorem}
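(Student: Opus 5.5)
The lower bound $\mathsf{S4.2}\subseteq$ (ZFC-provable forcing validities) is already given by Proposition~\ref{prop:S4.2-soundness}. The arguments there use only reflexivity, transitivity and directedness of $\preccurlyeq$, and these are provable in $\mathsf{ZFC}$ uniformly for every ground model. So soundness holds for every substitution of $L_{\in}$-sentences for the propositional variables, and every $\mathsf{S4.2}$-theorem is a $\mathsf{ZFC}$-provable forcing validity. The real work is the converse. Given $\mathsf{S4.2}\nvdash\varphi$, I will exhibit a model $M$ of $\mathsf{ZFC}$ and an assignment $p\mapsto\sigma_p$ of $L_{\in}$-sentences under which $\varphi$ fails at $M$ in the forcing Kripke model $(\mathcal W,\preccurlyeq,V)$. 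Since $\mathsf{ZFC}$ is assumed consistent, such a failure shows that $\varphi$ is not $\mathsf{ZFC}$-provably valid.

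\textbf{Frame reduction.} I will use the standard modal fact that $\mathsf{S4.2}$ is complete with respect to finite pre-Boolean algebras. These are the finite reflexive, transitive, directed frames obtained from a finite Boolean algebra $\Pow(n)$, ordered by $\subseteq$, by replacing each point with a finite cluster. From a finite rooted reflexive–transitive–directed countermodel, with a top cluster, unravelling gives such a frame. So it suffices to show the following: for every finite pre-Boolean algebra $F$ and every valuation on $F$, there are a ground model $M$ and sentences $\sigma_p$ such that the map $N\mapsto$ (the position of $N$ in $F$) is a p-morphism from the forcing extensions of $M$ onto $F$, sending $M$ to the root. Modal truth is preserved by p-morphisms, so a failure of $\varphi$ at the root of $F$ transfers to $M$.

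\textbf{Control statements.} To build the p-morphism I will use independent families of \emph{buttons} and \emph{switches} over $M=L$. A button is a sentence $b$ such that $\Diamond_F\Box_F b$ holds in every extension; once pushed, it stays true forever. A switch is a sentence $s$ such that both $\Diamond_F s$ and $\Diamond_F\neg s$ hold in every extension. Independence means that each button can be pushed, and each switch flipped, without affecting the others. Concrete candidates are, in $L$:
\begin{itemize}
\item buttons $b_i$: ``$\aleph_{i+1}^L$ is collapsed'';
\item switches $s_j$: ``$\mathsf{GCH}$ holds at $\aleph_{\omega+j}$''.
\end{itemize}
The unpushed buttons plus the switch pattern determine the node. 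The $n$ buttons code the element of $\Pow(n)$: the set of pushed buttons. The switches code the position within a cluster, using $\lceil\log_2 k\rceil$ switches for a cluster of size $k$. Each $\sigma_p$ is then the finite disjunction of the button/switch patterns of the nodes where $p$ holds. The map $N\mapsto$ (pattern of $N$) must satisfy two conditions:
\begin{itemize}
\item \emph{forth}: the pushed set only grows along $\preccurlyeq$, because buttons are monotone;
\item \emph{back}: from any $N$, each larger pushed set and each switch pattern is reachable by further forcing, by independence.
\end{itemize}
Together these give the p-morphism condition.

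\textbf{Main obstacle.} I expect the hard step to be verifying genuine independence of the buttons and switches over $L$ and all its extensions. In particular, collapsing cardinals to push $b_i$ must neither accidentally push another $b_{i'}$ nor destroy the ability to set the $\mathsf{GCH}$ switches. Equally, switch-setting forcings must not push buttons. My first approach is to choose the buttons at small cardinals and the switches high enough, at $\aleph_{\omega+j}$ with suitably spaced indices. Then Easton-style product forcing adjusts the continuum function at the switch cardinals, and small collapses at the button cardinals. Using standard preservation facts (small forcing does not affect large cardinals; closure of the switch forcings), I will check each interaction case by case. If spacing alone is insufficient, the fallback is to measure buttons relative to $L$ in a way that is absolute under all further forcing, so that monotonicity is automatic.
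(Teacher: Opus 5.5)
The paper gives no proof of this statement. It records the soundness half (Proposition~\ref{prop:S4.2-soundness}) and cites Hamkins--L\"owe for the rest. Your outline follows their strategy: completeness for finite pre-Boolean algebras, then a labelling by independent buttons and switches over $L$. The modal reduction is fine modulo the points below, but the step you postpone is the whole set-theoretic content, and the tools you list do not supply it.

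\textbf{The gap: independence.} The back clause of your p-morphism needs that from \emph{every} forcing extension $N$ of $L$ one can push any unpushed button, and set any switch, without disturbing the others. For $i\ge 1$, the ``small'' collapse $\mathrm{Col}(\aleph_i,\aleph_{i+1})$ computed in $N$ codes, by density, every subset of $\aleph_{i-1}$ lying in $N$ on some block of the generic function. So it collapses $(2^{\aleph_{i-1}})^N$ to $\aleph_i$, which is fatal when that power is large.
\begin{enumerate}[label=(\roman*)]
\item In $N=L$ plus $\aleph_3^L$ Cohen reals, pushing $b_1$ this way also pushes $b_2$.
\item Whenever your switch at $\aleph_\omega$ is off, things are worse. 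By covering, SCH holds in extensions of $L$, so GCH fails at $\aleph_\omega$ only if $2^{\aleph_n}\ge\aleph_{\omega+2}$ for some $n$. Pushing any $b_i$ with $i>n$ in this way then collapses $\aleph_\omega$ and renumbers every switch cardinal $\aleph_{\omega+j}$.
\end{enumerate}
So spacing plus ``small forcing does not affect large cardinals'' does not yield independence.

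What you need is a collapse whose size does not depend on the cardinal arithmetic of $N$, for instance $\mathrm{Col}(\aleph_i^L,\aleph_{i+1}^L)$ as computed in $L$, which has size $\aleph_{i+1}^L$ in $N$. You then need an actual proof that it preserves the smaller cardinals of an arbitrary $N$, where it is no longer closed. The switches have a similar problem: turning a switch on requires collapsing $2^{\aleph_{\omega+j}}$ onto $\aleph_{\omega+j+1}$, which renumbers the higher $\aleph_{\omega+j'}$, so switches must be reset jointly rather than one at a time. Your fallback only secures monotonicity, which is already automatic for ``$\aleph_{i+1}^L$ is collapsed''. The real problem is side effects.

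\textbf{Smaller points.}
\begin{enumerate}[label=(\roman*)]
\item Completeness of $\mathsf{S4.2}$ for finite pre-Boolean algebras is itself a Hamkins--L\"owe lemma, and ``unravelling'' does not prove it: an unravelling is a tree, not a directed frame with a top cluster. What is needed is that every finite rooted $\mathsf{S4.2}$ frame is a p-morphic image of a finite pre-Boolean algebra.
\item With clusters of unequal size, fix for each set $A$ of pushed buttons a surjection from all $2^m$ switch patterns onto the cluster over $A$. Otherwise some extensions receive no label and your map is not total.
\item Only $\mathrm{Con}(\mathsf{ZFC})$ is assumed, so $M$ cannot be taken transitive. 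Run the argument inside $\mathsf{ZFC}+V=L$, with $\Diamond_F$ expressed through the forcing relation, concluding $\mathsf{ZFC}+V=L\vdash\neg\varphi(\sigma_p)$.
\end{enumerate}
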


Corollary~\ref{cor:atomic-forceability} shows that, for \emph{non-modal} set-theoretic sentences,
our internal compatibility possibility at a Boolean state $b$ coincides with the usual forcing notion of
\emph{forceability below $b$}. In particular, at the top element $1_B$ we recover the familiar slogan:
\[
M_B,1_B\models \Diamond p
\quad\Longleftrightarrow\quad
\text{$p$ holds in some forcing extension.}
\]

Nevertheless, the two modalities diverge sharply once one allows \emph{nested} modal operators.
In the all-state semantics of Sections~\ref{sec:extended-semantics}--\ref{sec:results}, the isolated state
$0$ forces failures of reflexive principles such as $(T)$. By contrast, in the translation-based semantics
of Section~\ref{sec:completeness} one works on the nonzero part $B^+$, and the exact global logic becomes
$\KTB$. Even there, however, the compatibility relation is in general non-transitive, so
principles such as $(4)$  still fail in general. Thus the internal nonzero-state logic is $\KTB$,
whereas the external forcing logic is $\mathsf{S4.2}$.


\begin{thebibliography}{99}

\bibitem{BartonWilliams}
N.~Barton and K.~J.~Williams.
\newblock Varieties of class-theoretic potentialism.
\newblock \emph{The Review of Symbolic Logic}, 17(1):272--304, 2024.

\bibitem{Bell}
J.~L.~Bell.
\newblock \emph{Boolean-Valued Models and Independence Proofs in Set Theory}.
\newblock Clarendon Press, Oxford, 1977.

\bibitem{Blackburn}
P.~Blackburn, M.~de Rijke, and Y.~Venema.
\newblock \emph{Modal Logic}.
\newblock Cambridge University Press, 2001.

\bibitem{EsakiaLoewe2012}
L. Esakia and B. Löwe,
\newblock Fatal Heyting Algebras and Forcing Persistent Sentences.
\newblock \emph{Studia Logica} 100(1--2) (2012), 163--173.

\bibitem{GilbertVenturi2021}
D. R.\ Gilbert and G. Venturi,
\newblock Reflexive-Insensitive Logics, the Boxdot Translation, and the Modal Logic of Generic Absoluteness.
\newblock \emph{Notre Dame Journal of Formal Logic} 62(2) (2021), 269--283.

\bibitem{HamkinsLeibmanLow}
J.~D.~Hamkins, G.~Leibman, and B.~L\"owe.
\newblock Structural connections between a forcing class and its modal logic.
\newblock \emph{Israel Journal of Mathematics}, 207(2):617--651, 2015.

\bibitem{HamkinsLinnebo}
J.~D.~Hamkins and \O.~Linnebo.
\newblock The modal logic of set-theoretic potentialism and the potentialist maximality principles.
\newblock \emph{Review of Symbolic Logic}, 15(1):1--35, 2022.

\bibitem{HamkinsLowe}
J.~D.~Hamkins and B.~L\"owe.
\newblock The modal logic of forcing.
\newblock \emph{Transactions of the American Mathematical Society}, 360(4):1793--1817, 2008.

\bibitem{Jech}
T.~Jech.
\newblock \emph{Set Theory}.
\newblock Springer Monographs in Mathematics. Springer, 2002.

\bibitem{InamdarLoewe2016}
T. Inamdar and B. Löwe,
\newblock The Modal Logic of Inner Models.
\newblock \emph{The Journal of Symbolic Logic} 81(1) (2016), 225--236.

\bibitem{LowePassmannTarafder}
B.~L\"owe, R.~Pa{\ss}mann, and S.~Tarafder.
\newblock Constructing illoyal algebra-valued models of set theory.
\newblock \emph{Algebra Universalis}, 82(46), 2021.

\end{thebibliography}
\end{document}